\newcommand{\norm}[1]{\left\|#1\right\|}
\newcommand{\innOm}[2]{(#1,#2)_\Omega}
\newcommand{\innGa}[2]{\langle #1,#2\rangle_\Gamma}
\numberwithin{equation}{section}
\newtheorem{theorem}{Theorem}[section]
\newtheorem{lemma}[theorem]{Lemma}
\newtheorem{proposition}[theorem]{Proposition}
\newtheorem{corollary}[theorem]{Corollary}
\theoremstyle{definition}
\theoremstyle{remark}
\newtheorem{remark}[theorem]{Remark}
\begin{document}
	
	\title{A New HDG Method for Dirichlet Boundary Control of Convection Diffusion PDEs II: Low Regularity}

	\author{Weiwei Hu%
		\thanks{Department of Mathematics, Oklahoma State
			University, Stillwater, OK (weiwei.hu@okstate.edu). W.~Hu was supported in part by a postdoctoral fellowship for the annual program on Control Theory and its Applications at the Institute for Mathematics and its Applications (IMA) at the University of Minnesota.}%
		\and
		Mariano Mateos%
		\thanks{Dpto. de Matem\'aticas. Universidad de Oviedo, Campus de Gij\'on, Spain (mmateos@uniovi.es).  M.\ Mateos was supported by the Spanish Ministerio de Econom\'{\i}a y Competitividad under project MTM2014-57531-P.}%
		\and
		John~R.~Singler%
		\thanks{Department of Mathematics
			and Statistics, Missouri University of Science and Technology,
			Rolla, MO (\mbox{singlerj@mst.edu}, ywzfg4@mst.edu). J.~Singler and Y.~Zhang were supported in part by National Science Foundation grant DMS-1217122.  J.~Singler and Y.~Zhang thank the IMA for funding research visits, during which some of this work was completed.}
		\and
		Xiao Zhang%
		\thanks{College of Mathematics, Sichuan University,
		Chengdu, China (zhangxiaofem@163.com). X.~Zhang thanks Missouri University of Science and Technology for hosting him as a visiting scholar; some of this work was completed during his research visit.}
		\and
		Yangwen Zhang%
		\footnotemark[3]
	}
	
	\date{}
	
	\maketitle

	\maketitle
	
		\begin{abstract}
			In the first part of this work, we analyzed a Dirichlet boundary control problem for an elliptic convection diffusion PDE and proposed a new hybridizable discontinuous Galerkin (HDG) method to approximate the solution.  For the case of a 2D polygonal domain, we also proved an optimal superlinear convergence rate for the control under certain assumptions on the domain and on the target state.  In this work, we revisit the convergence analysis without these assumptions; in this case, the solution can have low regularity and we use a different analysis approach.  We again prove an optimal convergence rate for the control, and present numerical results to illustrate the convergence theory.
		\end{abstract}

\section{Introduction}
In Part I of this work \cite{HuMateosSinglerZhangZhang1}, we considered the following  Dirichlet boundary control problem:  Minimize the cost functional
\begin{align}
J(u)=\frac{1}{2}\| y- y_{d}\|^2_{L^{2}(\Omega)}+\frac{\gamma}{2}\|u\|^2_{L^{2}(\Gamma)}, \quad \gamma>0, \label{cost1}
\end{align}
subject to the elliptic convection diffusion equation
\begin{equation}\label{Ori_problem}
\begin{split}
-\Delta y+\bm \beta\cdot\nabla y&=f \quad\text{in}~\Omega,\\
y&=u\quad\text{on}~\partial\Omega,
\end{split}
\end{equation}
where $ f \in L^2(\Omega) $, the vector field $\bm{\beta}$ satisfies
\begin{align}\label{beta_con}
\nabla\cdot\bm{\beta} \le 0,
\end{align}
and $\Omega\subset \mathbb{R}^{d} $ $ (d\geq 2)$ is a Lipschitz polyhedral domain with boundary $\Gamma = \partial \Omega$.

Many researchers have considered the numerical approximation of optimal control problem for convection diffusion equations \cite{MR2302057,MR2595051,MR2486088,MR2178571,MR2068903,MR2550371} and also optimal Dirichlet boundary control problems for the Poisson equation and other PDEs \cite{MR2272157,MR2558321,MR3070527,MR2347691,MR3317816,MR3614013,MR2020866,MR2567245,MR1632548,ravindran2017finite,MR3641789,MR2806572,ApelMateosPfeffererRosch17,MR1135991,MR1145711,MR1874072}.  However, the authors are unaware of any theoretical and numerical works in the literature concerning the above problem.  Progress on this problem is an important step towards the analysis and approximation of Dirichlet boundary control problems for fluid flows.

Formally, the optimal control $ u \in L^2(\Gamma) $ and the optimal state $ y \in L^2(\Omega) $ minimizing the cost functional satisfy the optimality system
\begin{subequations}\label{eq_adeq}
	\begin{align}
	-\Delta y+\bm \beta\cdot\nabla y &=f\qquad\quad\text{in}~\Omega,\label{eq_adeq_a}\\
	y &= u\qquad\quad\text{on}~\partial\Omega,\label{eq_adeq_b}\\
	-\Delta z-\nabla\cdot(\bm{\beta} z) &=y-y_d\quad  \text{in}~\Omega,\label{eq_adeq_c}\\
	z&=0\qquad~ \ \  \ \text{on}~\partial\Omega,\label{eq_adeq_d}\\
	\nabla z \cdot\bm n - \gamma  u&=0\qquad\quad \  \text{on}~\partial\Omega.\label{eq_adeq_e}
	\end{align}
\end{subequations}
In Part I, we showed in the 2D case that the optimal control is indeed determined by a weaker formulation of the above optimality system and we proved a regularity result for the solution.

We also introduced a new hybridzable discontinuous Galerkin (HDG) method to approximate the solution of the optimality system, and obtained an optimal superlinear convergence rate for the control.  However, there are two main restrictions for our convergence results in Part I.  First, we assumed the largest interior angle $ \omega $ of the convex polygonal domain belongs to $[\pi/3,2\pi/3)$.  Second, we assumed the desired state $y_d$ is in $ H^{s}(\Omega)$ for some $s>1/2$.  When one of these conditions is not satisfied, the optimal control can have low regularity, i.e., $ u \in H^{r_u}(\Gamma) $ for some $ r_u < 1 $.  We briefly review the regularity theory and the new HDG algorithm in \Cref{sec:Regularity and HDG Formulation}.

In this work, we use techniques from \cite{MR3508837,LiXie16_SINUM} to remove the restrictions on the largest interior angle $ \omega $ of the convex domain $\Omega$ and the desired state $y_d$.  Specifically, in \Cref{sec:analysis} we obtain optimal convergence rates for the control when $\omega\in [\pi/3,\pi)$ and $y_d\in H^{s}(\Omega)$ for some $s \geq 0$.  We illustrate the low regularity convergence theory with numerical results in \Cref{sec:numerics}.

\section{Background: Regularity and HDG Formulation}
\label{sec:Regularity and HDG Formulation}

To begin, we briefly review the regularity results for the optimal control problem and the new HDG method from Part I.

\subsection{Optimal Control Problem: Regularity}
\label{sec:regularity}

As in Part I, we use the standard notation $W^{m,p}(\Omega)$ for Sobolev spaces on $ \Omega $, and let $\|\cdot\|_{m,p,\Omega}$ and $|\cdot|_{m,p,\Omega}$ denote the Sobolev norm and seminorm.  We let $H^{m}(\Omega)$ denote the Sobolev space when $ p = 2 $ with norm $\|\cdot\|_{m,\Omega}$ and seminorm $|\cdot|_{m,\Omega}$.  Also, set $H_0^1(\Omega)=\{v\in H^1(\Omega):v=0 \;\mbox{on}\; \partial \Omega\}$ and $ H(\text{div},\Omega) = \{\bm{v}\in [L^2(\Omega)]^2, \nabla\cdot \bm{v}\in L^2(\Omega)\} $.  We denote the $L^2$-inner products on $L^2(\Omega)$ and $L^2(\Gamma)$ by
\begin{align*}
(v,w)_{\Omega} &= \int_{\Omega} vw  \quad \forall v,w\in L^2(\Omega),\\
\left\langle v,w\right\rangle_{\Gamma} &= \int_{\Gamma} vw  \quad\forall v,w\in L^2(\Gamma).
\end{align*}

For the analysis of the optimal control problem, we considered the following scenario in Part~I.  Suppose $\Omega$ is a convex polygonal domain, and let $\omega$ denote its largest interior angle.  We have $1<\pi/\omega\leq 3$.  We assume $ \bm \beta $ satisfies
\begin{equation}\label{eqn:beta_assumptions1}
\bm \beta \in [L^\infty(\Omega)]^d,  \quad  \nabla\cdot\bm{\beta}\in L^\infty(\Omega),  \quad  \nabla \cdot \bm \beta \leq 0,  \quad  \nabla \nabla \cdot \bm{\beta}\in[L^2(\Omega)]^d.
\end{equation}
The mixed weak form of the formal optimality system \eqref{eq_adeq_a}-\eqref{eq_adeq_e} is
\begin{subequations}
	\begin{align}
	\innOm{ {\bm q}}{\bm r} -\innOm{ y}{\nabla\cdot \bm r} + \innGa{ u}{\bm r\cdot \bm n} &= 0,\label{mixed_a}\\
	\innOm{\nabla\cdot( {\bm q}+\bm{\beta}  y)}{w} -\innOm{ y\nabla\cdot\bm{\beta}}{w}  &= \innOm{f}{w},\label{mixed_b}\\
	\innOm{ {\bm p}}{\bm r}-\innOm{ z}{\nabla\cdot \bm r} &= 0,\label{mixed_c}\\
	\innOm{\nabla\cdot( {\bm p}-\bm{\beta} z)}{w} &= \innOm{ y- {y_d}}{w},\label{mixed_d}\\
	\innGa{\gamma  u+ {\bm p}\cdot\bm n}{\mu} &=  0,\label{mixed_e}
	\end{align}
	for all $(\bm r,w,\mu)\in H(\textup{div},\Omega)\times L^2(\Omega)\times L^2(\Gamma)$.
\end{subequations}
Also, we assume $ f = 0 $ for the regularity theorem below; nonzero forcing can be treated by a simple change of variables as in \cite[pg.\ 3623]{AMPR-2015}.

We proved the following regularity theorem in Part I \cite{HuMateosSinglerZhangZhang1}.
\begin{theorem}\label{MT210}
	Assume $\Omega$ is convex and $ f = 0 $.  If $y_d\in H^{t^*}(\Omega)$ for some $0\leq t^*<1$, then the optimal control problem has a unique solution $  u\in L^2(\Gamma)$ and $ u $ is uniquely determined by the optimality system \eqref{mixed_a}-\eqref{mixed_e}.  Moreover, for any $ s \geq 1/2 $ satisfying $s\leq \frac 12 +t^*$ and $s<\min\{\frac 3 2,\frac{\pi}{\omega}-\frac 1 2\}$, we have $  u\in H^s (\Gamma)$,
	\begin{align*}
	({\bm p}, y,  z) &\in [H^{s+\frac 12}(\Omega)]^d\times H^{s+\frac 12}(\Omega)\times (H^{s+\frac 32}(\Omega)\cap H_0^1(\Omega)),\\
	{\bm q} &\in  [H^{s-\frac 12}(\Omega)]^d \cap H(\mathrm{div},\Omega).
	\end{align*}
\end{theorem}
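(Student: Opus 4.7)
The plan is to (i) establish existence and uniqueness of the optimal control via convex optimization, (ii) derive the mixed optimality system through an adjoint calculation, and (iii) bootstrap the regularity using sharp elliptic shift theorems on convex polygonal domains. For (i) and (ii): for any $u\in L^2(\Gamma)$ and $f=0$, the Dirichlet problem \eqref{eq_adeq_a}-\eqref{eq_adeq_b} admits a unique very weak solution $y=Su\in L^2(\Omega)$ defined by transposition against test functions in $H^2(\Omega)\cap H_0^1(\Omega)$; convexity of $\Omega$ is exactly what guarantees that this test space has the required regularity. The resulting map $S:L^2(\Gamma)\to L^2(\Omega)$ is linear and continuous, so the reduced cost $j(u):=J(Su)$ is strictly convex, continuous, and coercive on $L^2(\Gamma)$, yielding a unique minimizer $u$. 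Computing $j'(u)=0$ by introducing an adjoint state $z\in H_0^1(\Omega)$ (whose coercivity relies on \eqref{beta_con}) and setting $\bm q=-\nabla y$, $\bm p=-\nabla z$ recovers exactly the mixed form \eqref{mixed_a}-\eqref{mixed_e}; strict convexity of $j$ forces $u$ to be uniquely determined by this system.

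The regularity bootstrap then proceeds in stages. Starting from $u\in L^2(\Gamma)$ and $y\in L^2(\Omega)$, $H^2$-regularity on convex domains applied to the adjoint equation \eqref{eq_adeq_c}-\eqref{eq_adeq_d} yields $z\in H^2(\Omega)\cap H_0^1(\Omega)$, and the trace $\nabla z\cdot\bm n\in H^{1/2}(\Gamma)$ combined with \eqref{eq_adeq_e} upgrades $u$ to $H^{1/2}(\Gamma)$. A standard lifting then gives $y\in H^1(\Omega)$, so $y-y_d\in H^{t^*}(\Omega)$ (using $t^*<1$). Applying a sharp Grisvard-type shift theorem for convex polygonal domains to the adjoint equation with this right-hand side produces $z\in H^{s+3/2}(\Omega)\cap H_0^1(\Omega)$ for any $s\leq\tfrac12+t^*$ with $s<\min\{\tfrac32,\pi/\omega-\tfrac12\}$. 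The trace delivers $u\in H^s(\Gamma)$, a final lifting gives $y\in H^{s+1/2}(\Omega)$, and differentiation produces $\bm p\in[H^{s+1/2}(\Omega)]^d$ and $\bm q\in[H^{s-1/2}(\Omega)]^d$; membership $\bm q\in H(\mathrm{div},\Omega)$ follows from the state PDE since $\nabla\cdot\bm q=-\Delta y=\bm\beta\cdot\nabla y-f\in L^2(\Omega)$, which uses precisely the assumption $s\geq\tfrac12$.

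The main obstacle is the sharp shift estimate $z\in H^{s+3/2}$ in the regime where $s$ approaches the corner barrier $\pi/\omega-\tfrac12$: at this cutoff the generic convex-domain $H^2$-theory no longer suffices and one must instead employ Grisvard-type singular expansions at each vertex (or equivalently weighted Sobolev techniques) to extract the fractional shift just below the critical index. A secondary, largely bookkeeping, difficulty is closing the iteration consistently with both constraints $s\leq\tfrac12+t^*$ and $s<\pi/\omega-\tfrac12$ as $s$ crosses the threshold $\tfrac12$, where the behavior of the trace and lifting operators changes and where the very weak solution framework transitions to the classical $H^1$ framework for the state.
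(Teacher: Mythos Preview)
The paper you are working from does not actually prove this theorem: it is stated in Section~2.1 as a result recalled from Part~I, with the sentence ``We proved the following regularity theorem in Part~I \cite{HuMateosSinglerZhangZhang1}.'' There is therefore no proof in this paper to compare against.

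That said, your sketch is the standard route for Dirichlet boundary control regularity on convex polygons and is almost certainly what Part~I carries out (the paper explicitly points to \cite{AMPR-2015} for the analogous Poisson argument, which proceeds exactly as you describe: transposition/very weak solutions for the state, adjoint-based first-order conditions, then a bootstrap using corner-sensitive shift theorems). Your identification of the two constraints $s\le \tfrac12+t^*$ (coming from the right-hand side $y-y_d$ of the adjoint equation) and $s<\min\{\tfrac32,\pi/\omega-\tfrac12\}$ (coming from the Grisvard corner index) is correct, as is the observation that $\bm q\in H(\mathrm{div},\Omega)$ hinges on $s\ge\tfrac12$ so that $\nabla y\in [L^2(\Omega)]^d$.

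One small caution on the sketch: the step ``$z\in H^2\Rightarrow \partial_n z\in H^{1/2}(\Gamma)$'' is delicate on a polygon because $\bm n$ jumps at corners, so the normal trace is a priori only edge-wise $H^{1/2}$; obtaining global $H^{1/2}(\Gamma)$ regularity (and, later, global $H^s(\Gamma)$ for $s<1$) typically requires either treating the edges separately and checking the pairing in the optimality condition edge by edge, or invoking the fact that for $s<1$ the space $H^s(\Gamma)$ coincides with the edge-wise product. This is a bookkeeping point rather than a genuine obstruction, and it is handled in the references above.
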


\Cref{MT210} implies the regularity of the solution of the optimality system \eqref{mixed_a}-\eqref{mixed_e} depends on the desired state $y_d$ and the domain $\Omega$.  As is known, solutions to Dirichlet boundary control problems can have low regularity; this causes difficulty for numerical analysis.

In Part I \cite{HuMateosSinglerZhangZhang1}, for the numerical analysis of the new HDG method we assumed $ \Omega $ is convex, $ y_d \in H^{t^*}(\Omega) $ for some $ t^* \in (1/2,1) $, and $ \pi/3 <\omega < 2\pi/3 $.  These assumptions give high regularity for the optimal control, i.e., $ u \in H^{r_u}(\Gamma) $ for some $ r_u \in (1, 3/2) $.  Furthermore, the assumptions give $\bm{q} \in H^{r_{\bm q}}(\Omega)$ with $ r_{\bm q} > 1/2 $, which guarantees $ \bm q $ has a well-defined trace on the boundary $ \Gamma $.  We used this property in the HDG convergence analysis.

In this paper we again assume $ \Omega $ is convex, but we remove the restrictions on the desired state and the largest interior angle for the numerical analysis; i.e., we only require $ t^* \in [0,1) $ and $\pi/3 \leq \omega<\pi $.  In this case, the regularity of the optimal control can be low, i.e., $ u \in H^{r_u}(\Gamma) $ for some $ r_u \in [1/2, 1) $, and $ \bm q $ is no longer guaranteed to have a well-defined $ L^2 $ boundary trace; however, the optimality system \eqref{mixed_a}-\eqref{mixed_e} can be understood in a standard weak sense.

\subsection{The HDG Formulation}
\label{sec:HDG}

For the HDG method, we assume $ \Omega $ is a polyhedral domain with $ d \geq 2 $ that is not necessarily convex.  We use the same notation from Part I \cite{HuMateosSinglerZhangZhang1} to describe the method.  For more information about HDG methods, see, e.g., \cite{MR2485455,MR2772094,MR2513831,MR2558780,MR2796169, MR3626531,MR3522968,MR3463051,MR3452794,MR3343926,ChenQiuShi17,MR3440284,QiuShi16NS}.

Let $\mathcal{T}_h$ be a collection of disjoint elements that partition $\Omega$, and let $\partial \mathcal{T}_h$ be the set $\{\partial K: K\in \mathcal{T}_h\}$.  For the analysis, we assume $\mathcal{T}_h$ is a conforming triangulation of $\Omega$.  Denote the elements of $\mathcal{T}_h$ by $K$ and the faces of $K$ by $e$. Denote $\varepsilon_h$ the set of all faces, $\varepsilon_h^\partial$ the set of faces such that $e\subset\Gamma$, and $\varepsilon_h^0 = \varepsilon_h \setminus \varepsilon_h^\partial$.  The mesh dependent inner products are denoted by
\begin{align*}
(w,v)_{\mathcal{T}_h} = \sum_{K\in\mathcal{T}_h} (w,v)_K,   \quad\quad\quad\quad\left\langle \zeta,\rho\right\rangle_{\partial\mathcal{T}_h} = \sum_{K\in\mathcal{T}_h} \left\langle \zeta,\rho\right\rangle_{\partial K}.
\end{align*}

Let $\mathcal{P}^k(D)$ denote the set of polynomials of degree at most $k$ on a domain $D$.  As in Part I, we use the discontinuous finite element spaces
\begin{align}
\bm{V}_h  &:= \{\bm{v}\in [L^2(\Omega)]^d: \bm{v}|_{K}\in [\mathcal{P}^k(K)]^d, \forall K\in \mathcal{T}_h\},\\
{W}_h  &:= \{{w}\in L^2(\Omega): {w}|_{K}\in \mathcal{P}^{k+1}(K), \forall K\in \mathcal{T}_h\},\\
{M}_h  &:= \{{\mu}\in L^2(\mathcal{\varepsilon}_h): {\mu}|_{e}\in \mathcal{P}^{k+1}(e), \forall e\in \varepsilon_h\}
\end{align}
for the flux variables, scalar variables, and boundary trace variables, respectively.  Note that the polynomial degree for the scalar and boundary trace variables is one order higher than the polynomial degree for the flux variables.  We discussed this unusual choice for $ M_h $ in Part I.

Define $M_h(o)$ and $M_h(\partial)$ in the same way as $M_h$, but with $\varepsilon_h^o$ and $\varepsilon_h^{\partial}$ replacing $ \varepsilon_h $, respectively. For any functions $w\in W_h$ and $\bm r\in \bm V_h$, we use $\nabla w$ and $ \nabla \cdot \bm r $ to denote the gradient of $ w $ and the divergence of $ \bm r $ taken piecewise on each element $K\in \mathcal T_h$.


To approximate the solution of the mixed weak form \eqref{mixed_a}-\eqref{mixed_e} of the optimality system, the HDG formulation considered here is modified from Part I to avoid the estimation of $\bm q$ on the boundary.  In the 2D case, recall from \Cref{sec:regularity} that $ \bm q $ is not guaranteed to have a well-defined $ L^2 $ boundary trace since we consider a solution of the optimal control problem with low regularity.

The HDG method seeks approximate fluxes ${\bm{q}}_h,{\bm{p}}_h \in \bm{V}_h $, states $ y_h, z_h \in W_h $, interior element boundary traces $ \widehat{y}_h^o,\widehat{z}_h^o \in M_h(o) $, and  boundary control $ u_h \in M_h(\partial)$ satisfying
\begin{subequations}\label{HDG_discrete2}
	\begin{align}
	(\bm q_h,\bm r_1)_{\mathcal T_h}-( y_h,\nabla\cdot\bm r_1)_{\mathcal T_h}+\langle \widehat y_h^o,\bm r_1\cdot\bm n\rangle_{\partial\mathcal T_h\backslash \varepsilon_h^\partial}+\langle  u_h,\bm r_1\cdot\bm n\rangle_{\varepsilon_h^\partial}&=0, \label{HDG_discrete2_a}\\
	(\nabla\cdot\bm{q}_h,  w_1)_{{\mathcal{T}_h}} - (\bm{\beta} y_h, \nabla w_1)_{\mathcal T_h} - (\nabla\cdot\bm{\beta} y_h, w_1)_{\mathcal T_h} \quad &\nonumber\\
	\quad + \langle (h^{-1}+\tau_1)  y_h, w_1\rangle_{\partial\mathcal T_h}  +\langle  (\bm{\beta}\cdot\bm n - \tau_1 - h^{-1})\widehat y_h^o, w_1 \rangle_{\partial{{\mathcal{T}_h}}\backslash\varepsilon_h^\partial}  \quad &\nonumber\\ 
	+ \langle  (\bm{\beta}\cdot\bm n - \tau_1-h^{-1}) u_h, w_1 \rangle_{\varepsilon_h^\partial}&=(f, w_1)_{{\mathcal{T}_h}}, \label{HDG_discrete2_b}
	\end{align}
	for all $(\bm{r}_1, w_1)\in \bm{V}_h\times W_h$,
	\begin{align}
	(\bm p_h,\bm r_2)_{\mathcal T_h}-(z_h,\nabla\cdot\bm r_2)_{\mathcal T_h}+\langle \widehat z_h^o,\bm r_2\cdot\bm n\rangle_{\partial\mathcal T_h\backslash\varepsilon_h^\partial}&=0,\label{HDG_discrete2_c}\\
	(\nabla\cdot\bm{p}_h,  w_2)_{{\mathcal{T}_h}} - (y_h, w_2)_{\mathcal T_h} + (\bm{\beta} z_h, \nabla w_2)_{\mathcal T_h} \quad &\nonumber\\
	+ \langle (h^{-1}+\tau_2) z_h, w_2\rangle_{\partial \mathcal T_h} -\langle (h^{-1}+\tau_2 + \bm{\beta}\cdot \bm n) \widehat{z}_h^o, w_2 \rangle_{\partial{{\mathcal{T}_h}}\backslash \varepsilon_h^{\partial}} &=- (y_d, w_2)_{{\mathcal{T}_h}} ,  \label{HDG_discrete2_d}
	\end{align}
	for all $(\bm{r}_2, w_2)\in \bm{V}_h\times W_h$,
	\begin{align}
	\langle{\bm{q}_h}\cdot \bm{n}, \mu_1 \rangle_{\partial\mathcal{T}_{h}\backslash {\varepsilon_h^{\partial}}}+\langle (h^{-1}+\tau_1) y_h,\mu_1 \rangle_{\partial\mathcal{T}_{h}\backslash {\varepsilon_h^{\partial}}} \quad &\nonumber\\
	+\langle (\bm{\beta}\cdot\bm n - \tau_1-h^{-1})\widehat{y}_h^{o} ,\mu_1 \rangle_{\partial\mathcal{T}_{h}\backslash {\varepsilon_h^{\partial}}}&=0,\label{HDG_discrete2_e}
	\end{align}
	for all $\mu_1\in M_h(o)$,
	\begin{align}
	\langle {\bm{p}_h}\cdot \bm{n}, \mu_2\rangle_{ \partial\mathcal{T}_{h}\backslash {\varepsilon_h^{\partial}}}+\langle (h^{-1}+\tau_2) z_h, \mu_2\rangle_{ \partial\mathcal{T}_{h}\backslash {\varepsilon_h^{\partial}}} \quad &\nonumber\\
	-\langle (\bm{\beta}\cdot\bm n+\tau_2+h^{-1}) \widehat{z}_h^o, \mu_2\rangle_{ \partial\mathcal{T}_{h}\backslash {\varepsilon_h^{\partial}}} &=0,\label{HDG_discrete2_f}
	\end{align}
	for all $\mu_2\in M_h(o)$, and the optimality condition
	\begin{align}
	\langle {\bm{p}_h}\cdot \bm{n}, \mu_3\rangle_{{\varepsilon_h^{\partial}}} + \gamma\langle u_h, \mu_3 \rangle_{\varepsilon_h^{\partial}}+\langle (h^{-1}+\tau_2) z_h, \mu_3\rangle_{{\varepsilon_h^{\partial}}}&=0, \label{HDG_discrete2_g}
	\end{align}
	for all $\mu_3\in M_h(\partial)$.  
\end{subequations}

Here, $\tau_1$ and $\tau_2$ are stabilization functions defined on $\partial \mathcal T_h$ that satisfy the same conditions as in Part I:
\begin{description}
	
	\item[\textbf{(A1)}] $\tau_2$ is piecewise constant on $\partial \mathcal T_h$.
	
	\item[\textbf{(A2)}] $\tau_1 = \tau_2 + \bm{\beta}\cdot \bm n$.
	
	\item[\textbf{(A3)}] For any  $K\in\mathcal T_h$, $\min{(\tau_2+\frac 1 2 \bm \beta \cdot \bm n)}|_{\partial K} >0$.
	
\end{description}
Conditions \textbf{(A2)} and \textbf{(A3)} imply
\begin{equation}\label{eqn:tau1_condition}
\min{(\tau_1-\frac 1 2 \bm \beta \cdot \bm n)}|_{\partial K} >0  \quad  \mbox{for any $K\in\mathcal T_h$}.
\end{equation}
This completes the formulation of the HDG method.

Notice that formulation \eqref{HDG_discrete2} is slightly different from formulation (3.4) in Part~I; specifically, equations (b) and (d) are modified.  A straightforward computation shows that both are equivalent; see Part I, Section 3.2.  Formulation \eqref{HDG_discrete2} above allows us to achieve error estimates in the low regularity case considered here.

\section{Error Analysis}
\label{sec:analysis}

Next, we perform a convergence analysis of the above HDG method.

\subsection{Assumptions and Main Result}

As in Part I, we assume throughout that $\Omega$ is a bounded convex polyhedral domain and $ \bm \beta $ satisfies
\begin{equation}\label{eqn:beta_assumptions2}
\bm \beta \in [C(\overline{\Omega})]^d,  \quad  \nabla\cdot\bm{\beta}\in L^\infty(\Omega),  \quad  \nabla \cdot \bm \beta \leq 0,  \quad  \nabla \nabla \cdot \bm{\beta}\in[L^2(\Omega)]^d.
\end{equation}
We assume the solution of the optimality system \eqref{mixed_a}-\eqref{mixed_e} has the following regularity properties:
\begin{subequations}\label{eqn:regularity2}
	\begin{gather}
	y \in H^{r_y}(\Omega),  \quad  z \in H^{r_z}(\Omega),  \quad  \bm q \in [ H^{r_{\bm q}}(\Omega) ]^d \cap H(\mathrm{div},\Omega),  \quad  \bm p \in H^{r_{\bm p}}(\Omega),\\
	r_y \ge 1,  \quad  r_z \ge 2,  \quad  r_{\bm q} \ge  0,  \quad  r_{\bm p} \ge 1.\label{eqn:s_rates_ineq}
	\end{gather}
\end{subequations}
In the 2D case, \Cref{MT210} guarantees this regularity condition is satisfied.

As mentioned in \Cref{sec:regularity}, the regularity of $ \bm q $ can be low and therefore $ \bm q $ may not have a $ L^2 $ boundary trace.  The $ H(\mathrm{div},\Omega) $ regularity of $ \bm q $ is critically important for the numerical analysis.

We also require the family of meshes $ \{ \mathcal T_h \} $ is a conforming quasi-uniform triangulation of $ \Omega $.  This assumption on the meshes is stronger than in Part I; there we assumed $ \{ \mathcal T_h \} $ is a conforming quasi-uniform polyhedral mesh.  Therefore, the analysis in Part I allows for a more general family of meshes; however, the analysis here allows us to treat the low regularity case.

We now state our main convergence result.
\begin{theorem}\label{main_res}
	Let
	\begin{equation}\label{eqn:s_rates}
	\begin{split}
	s_{\bm q} &= \min\{r_{\bm q}, k+1\}, \qquad  s_{y} = \min\{r_{y}, k+2\}, \\
	s_{\bm p} &= \min\{r_{\bm p}, k+1\},  \qquad s_{z} = \min\{r_{z}, k+2\}.
	\end{split}
	\end{equation}
	If the above assumptions hold and $ s_{\bm q} \in [0,1] $, then
	\begin{align*}
	\norm{u-u_h}_{\varepsilon_h^\partial}&\lesssim h^{s_{\bm p}-\frac 1 2}\norm{\bm p}_{s_{\bm p},\Omega} +  h^{s_{z}-\frac 3 2}\norm{z}_{s_{z},\Omega} + h^{s_{\bm q}+\frac 1 2}\norm{\bm q}_{s_{\bm q},\Omega} + h^{s_{y}-\frac 12 }\norm{y}_{s_{y},\Omega},\\
	\norm {y-y_h}_{\mathcal T_h} &\lesssim h^{s_{\bm p}-\frac 1 2}\norm{\bm p}_{s_{\bm p},\Omega} +  h^{s_{z}-\frac 3 2}\norm{z}_{s_{z},\Omega} + h^{s_{\bm q}+\frac 1 2}\norm{\bm q}_{s_{\bm q},\Omega} + h^{s_{y}-\frac 12 }\norm{y}_{s_{y},\Omega},\\
	\norm {\bm p - \bm p_h}_{\mathcal T_h}  &\lesssim h^{s_{\bm p}-\frac 1 2}\norm{\bm p}_{s_{\bm p},\Omega} +  h^{s_{z}-\frac 3 2}\norm{z}_{s_{z},\Omega} + h^{s_{\bm q}+\frac 1 2}\norm{\bm q}_{s_{\bm q},\Omega} + h^{s_{y}-\frac 12 }\norm{y}_{s_{y},\Omega},\\
	\norm {z - z_h}_{\mathcal T_h} & \lesssim  h^{s_{\bm p}-\frac 1 2}\norm{\bm p}_{s_{\bm p},\Omega} +  h^{s_{z}-\frac 3 2}\norm{z}_{s_{z},\Omega} + h^{s_{\bm q}+\frac 1 2}\norm{\bm q}_{s_{\bm q},\Omega} + h^{s_{y}-\frac 12 }\norm{y}_{s_{y},\Omega}.
	\end{align*}
	If in addition the inequalities in \eqref{eqn:s_rates_ineq} are strict and $ k \geq 1 $, then
	\begin{align*}
	\norm {\bm q - \bm q_h}_{\mathcal T_h} &\lesssim h^{s_{\bm p}-1}\norm{\bm p}_{s_{\bm p},\Omega} +  h^{s_{z}-2}\norm{z}_{s_{z},\Omega} + h^{s_{\bm q}}\norm{\bm q}_{s_{\bm q},\Omega} + h^{s_{y}-1 }\norm{y}_{s_{y},\Omega}.\\
	\end{align*}
\end{theorem}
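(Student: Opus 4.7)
The plan is to follow a projection-based HDG energy argument in the spirit of \cite{MR3508837,LiXie16_SINUM}. First, I would introduce the standard $L^2$ projections $\Pi_V$ onto $\bm V_h$, $\Pi_W$ onto $W_h$, and $P_M$ onto $M_h$, and split each error into a projection error (controlled by classical approximation theory) plus a discretization error $\varepsilon_{\bm q}^h = \bm q_h - \Pi_V \bm q$, and analogously $\varepsilon_y^h$, $\varepsilon_{\bm p}^h$, $\varepsilon_z^h$, $\varepsilon_{\widehat y}^h$, $\varepsilon_{\widehat z}^h$, $\varepsilon_u^h$. Using the consistency of the HDG method against the exact mixed system \eqref{mixed_a}--\eqref{mixed_e}, I would derive the seven coupled error equations for these discretization errors. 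The crucial point is that, because $\bm q$ need not admit an $L^2$ trace on $\Gamma$, all boundary terms of the form $\langle \bm q \cdot \bm n, \cdot \rangle_{\varepsilon_h^\partial}$ must be rewritten via integration by parts using only $\bm q \in H(\mathrm{div},\Omega)$; the modified stabilization with $h^{-1}$ in \eqref{HDG_discrete2} is exactly what makes this bookkeeping work.

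Next, I would run the energy argument by testing each error equation with the corresponding discretization error (for instance $\bm r_1 = \varepsilon_{\bm p}^h$, $w_1 = \varepsilon_z^h$, $\bm r_2 = \varepsilon_{\bm q}^h$, $w_2 = \varepsilon_y^h$, $\mu_1 = \varepsilon_{\widehat z}^h$, $\mu_2 = \varepsilon_{\widehat y}^h$, $\mu_3 = \varepsilon_u^h$) and summing. Under the stabilization assumptions \textbf{(A1)}--\textbf{(A3)}, inequality \eqref{eqn:tau1_condition}, and $\nabla\cdot\bm{\beta}\leq 0$, the state-adjoint cross terms should cancel through the optimality relation \eqref{HDG_discrete2_g}, producing a coercive identity of the form
\begin{align*}
\|\varepsilon_{\bm q}^h\|_{\mathcal T_h}^2 + \|\varepsilon_{\bm p}^h\|_{\mathcal T_h}^2 + \gamma\|\varepsilon_u^h\|_{\varepsilon_h^\partial}^2 + (\text{$h^{-1}$-weighted jump squares}) \lesssim (\text{projection-error residuals}).
\end{align*}
The right-hand side contains standard volumetric approximation errors together with boundary contributions from the projection error of $\bm q$; I would control the latter by an inverse trace inequality absorbed against the $h^{-1}$ stabilization. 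This interplay is precisely what produces the exponent $s_{\bm q} + \tfrac12$ in the stated rates and makes the low-regularity case tractable. With the energy estimate in hand, the bounds on $\|\bm p - \bm p_h\|_{\mathcal T_h}$ and on $\|u - u_h\|_{\varepsilon_h^\partial}$ follow directly, while $\|y - y_h\|_{\mathcal T_h}$ and $\|z - z_h\|_{\mathcal T_h}$ would come from a duality (Aubin--Nitsche) argument against the adjoint convection diffusion problem, rewriting the $L^2$ norm as a dual pairing that is bounded by the energy terms already controlled.

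Finally, for $\|\bm q - \bm q_h\|_{\mathcal T_h}$ under the stronger hypotheses $k \geq 1$ and strict inequalities in \eqref{eqn:s_rates_ineq}, I would revisit the error equation derived from \eqref{HDG_discrete2_a}, test it against the flux error itself, and exploit the improved regularity of $y$ together with the previously established bounds on $\varepsilon_y^h$ and on the $h^{-1}$-stabilized interior jumps. This extracts the final rate $h^{s_{\bm q}}$, which is one half-order lower than the scalar rates, as expected for a mixed method when the flux becomes the dominant error source. The main obstacle throughout is orchestrating the energy identity so that every boundary term involving traces of $\bm q$ is absorbed either by volumetric residuals (via $\bm q \in H(\mathrm{div},\Omega)$) or by the $h^{-1}$-weighted interior jumps, while still producing the asymmetric optimal exponents forced by the degree mismatch (order $k$ for the flux, $k+1$ for scalar and trace). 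The coupled control structure adds the further subtlety that the state-adjoint cancellations must survive this rewriting on $\varepsilon_h^\partial$, which is why the modified formulation \eqref{HDG_discrete2} is essential rather than the original one from Part I.
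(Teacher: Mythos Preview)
Your overall architecture (projection splitting, energy argument, duality for scalar variables) is sound, but the proposal has a genuine gap in exactly the place that distinguishes this low-regularity paper from Part~I.

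You write that you will ``control boundary contributions from the projection error of $\bm q$ by an inverse trace inequality absorbed against the $h^{-1}$ stabilization.'' This cannot work: an inverse trace inequality applies only to discrete functions, and when $r_{\bm q} < 1/2$ the continuous flux $\bm q$ (hence $\bm q - \Pi_V\bm q$) has no $L^2$ trace on element boundaries at all. The problematic term is not confined to $\varepsilon_h^\partial$; after the error equations are written, one is left with a volumetric term $(\nabla\cdot\delta^{\bm q}, \varepsilon_h^y)_{\mathcal T_h}$ coupled to an interior-face term $\langle \Pi_V\bm q\cdot\bm n, \varepsilon_h^{\widehat y}\rangle_{\partial\mathcal T_h}$, and one cannot integrate by parts elementwise to convert $(\nabla\cdot\bm q,\,\cdot\,)_K$ into $(\bm q,\nabla\,\cdot\,)_K$ without producing $\langle \bm q\cdot\bm n,\,\cdot\,\rangle_{\partial K}$, which is undefined. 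The paper's remedy is the $H_0^1$-conforming reconstruction $\mathcal I_h(\varepsilon_h^y,\varepsilon_h^{\widehat y})$ from \cite{MR3508837,LiXie16_SINUM} (Lemmas~\ref{interpolation_projection}--\ref{error_esti_inter}): because $\mathcal I_h(\varepsilon_h^y,\varepsilon_h^{\widehat y})\in H_0^1(\Omega)$, the identity $(\nabla\cdot\bm q,\mathcal I_h)_\Omega = -(\bm q,\nabla\mathcal I_h)_\Omega$ holds globally with no boundary term, and the residual pieces $(\nabla\cdot\bm q,\varepsilon_h^y-\mathcal I_h)_{\mathcal T_h}$ and $(\delta^{\bm q},\nabla\mathcal I_h)_{\mathcal T_h}$ are controllable using only $\|\nabla\cdot\bm q\|_\Omega$ and $\|\delta^{\bm q}\|_{\mathcal T_h}$. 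This operator, not the $h^{-1}$ stabilization, is what produces the exponent $s_{\bm q}+\tfrac12$; your proposal never invokes it.

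There is also a structural difference worth flagging. You propose a single coupled energy identity by cross-testing state errors against adjoint errors and invoking the optimality condition to cancel. The paper instead introduces an \emph{auxiliary} HDG problem \eqref{HDG_inter_u} with the control frozen at $P_M u$, obtains PDE-only estimates for $\varepsilon_h^{\bm q}$, $\varepsilon_h^y$ (Steps~2--3, the latter by duality), then $\varepsilon_h^{\bm p}$, $\varepsilon_h^z$ (Step~5, via a discrete Poincar\'e inequality rather than duality), and only afterward compares the auxiliary solution to the full discrete solution using the optimality condition (Steps~6--7). This decoupling lets the paper first bound $\|\varepsilon_h^{\bm p}\|_{\mathcal T_h}$ and the $z$-jump term cleanly, then lose exactly a factor $h^{-1/2}$ when passing to $\|u-u_h\|_{\varepsilon_h^\partial}$ through the discrete optimality relation. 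Your all-at-once identity would have to perform the same half-order bookkeeping simultaneously for all variables, and you have not indicated how the cross term $(y-y_h,\varepsilon_h^z)_{\mathcal T_h}$ arising from the adjoint residual is absorbed without already having the intermediate auxiliary bound on $\|y-y_h(u)\|_{\mathcal T_h}$.
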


\begin{remark}
	Note that we assume $ s_{\bm q} \in [0,1] $.  This is not a restriction since the case $ s_{\bm q} > 1 $ is treated in Part I on a more general family of meshes.
\end{remark}

Specializing to the 2D case gives the following result:
\begin{corollary}\label{cor:main_result}
	Suppose $ d = 2 $, $ f = 0 $, $ s_{\bm q} \in [0,1] $, and $ y_d \in H^{t^*}(\Omega) $ for some $ t^* \in [0,1) $.  Let $ \pi/3\le \omega<\pi $ be the largest interior angle of $\Gamma$, and let $ r > 0 $ satisfy
	$$
	r \leq  r_d := \frac{1}{2} + t^* \in [1/2,3/2),  \quad  \mbox{and}  \quad  r < r_{\Omega} := \min\left\{ \frac{3}{2}, \frac{\pi}{\omega} - \frac{1}{2} \right\} \in (1/2, 3/2].
	$$
	If $ k = 1 $, then
	\begin{align*}
	\norm{u-u_h}_{\varepsilon_h^\partial}&\lesssim h^{r} (\norm{\bm p}_{H^{r+1/2}(\Omega)} +  \norm{z}_{H^{r+3/2}(\Omega)} + \norm{\bm q}_{H^{r-1/2}(\Omega)} + \norm{y}_{H^{r+1/2}(\Omega)}),\\
	\norm{y-y_h}_{\mathcal T_h}&\lesssim h^{r} (\norm{\bm p}_{H^{r+1/2}(\Omega)} +  \norm{z}_{H^{r+3/2}(\Omega)} + \norm{\bm q}_{H^{r-1/2}(\Omega)} + \norm{y}_{H^{r+1/2}(\Omega)}),\\
	%
	%
	%
	%
	%
	%
	\norm {\bm p - \bm p_h}_{\mathcal T_h}   &\lesssim h^{r} (\norm{\bm p}_{H^{r+1/2}(\Omega)} +  \norm{z}_{H^{r+3/2}(\Omega)} + \norm{\bm q}_{H^{r-1/2}(\Omega)} + \norm{y}_{H^{r}+1/2(\Omega)}),\\
	\norm {z - z_h}_{\mathcal T_h}   & \lesssim  h^{r} (\norm{\bm p}_{H^{r+1/2}(\Omega)} +  \norm{z}_{H^{r+3/2}(\Omega)} + \norm{\bm q}_{H^{r-1/2}(\Omega)} + \norm{y}_{H^{r+1/2}(\Omega)}).
	\end{align*}
	If in addition $ r > 1/2 $, then
	$$
	\norm {\bm q - \bm q_h}_{\mathcal T_h}  \lesssim h^{r-1/2} (\norm{\bm p}_{H^{r+1/2}(\Omega)} +  \norm{z}_{H^{r+3/2}(\Omega)} + \norm{\bm q}_{H^{r-1/2}(\Omega)} + \norm{y}_{H^{r+1/2}(\Omega)}).
	$$
	Furthermore, if $ k = 0 $ then
	\begin{align*}
	\norm{u-u_h}_{\varepsilon_h^\partial}&\lesssim h^{1/2} (\norm{\bm p}_{H^{1}(\Omega)} +  \norm{z}_{H^{2}(\Omega)} + \norm{\bm q}_{H^{r-1/2}(\Omega)} + \norm{y}_{H^{r+1/2}(\Omega)}),\\
	\norm{y-y_h}_{\mathcal T_h}&\lesssim h^{1/2} (\norm{\bm p}_{H^{1}(\Omega)} +  \norm{z}_{H^{2}(\Omega)} + \norm{\bm q}_{H^{r-1/2}(\Omega)} + \norm{y}_{H^{r+1/2}(\Omega)}),\\
	\norm {\bm p - \bm p_h}_{\mathcal T_h}   &\lesssim h^{1/2} (\norm{\bm p}_{H^{1}(\Omega)} +  \norm{z}_{H^{2}(\Omega)} + \norm{\bm q}_{H^{r-1/2}(\Omega)} + \norm{y}_{H^{r+1/2}(\Omega)}),\\
	\norm {z - z_h}_{\mathcal T_h}   & \lesssim  h^{1/2} (\norm{\bm p}_{H^{1}(\Omega)} +  \norm{z}_{H^{2}(\Omega)} + \norm{\bm q}_{H^{r-1/2}(\Omega)} + \norm{y}_{H^{r+1/2}(\Omega)}).
	\end{align*}
	%
\end{corollary}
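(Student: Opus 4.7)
The corollary is a specialization: the sharp 2D regularity from Theorem~\ref{MT210} is fed into the abstract error bounds of Theorem~\ref{main_res}, and each exponent is computed. The plan is to identify the parameter $s$ of Theorem~\ref{MT210} with the parameter $r$ of the corollary, read off $r_{\bm p}, r_y, r_z, r_{\bm q}$, substitute into \eqref{eqn:s_rates}, and verify that the four abstract exponents appearing in Theorem~\ref{main_res} collapse to a single rate.

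First I apply Theorem~\ref{MT210} with $s = r$. The corollary's constraints $r \leq r_d = 1/2 + t^*$ and $r < r_\Omega = \min\{3/2,\pi/\omega - 1/2\}$ are exactly the Sobolev and geometric hypotheses of Theorem~\ref{MT210}, while the assumption $s_{\bm q}\in[0,1]$ together with $s_{\bm q} = \min\{r-1/2,k+1\}$ forces $r \in [1/2,3/2)$ and supplies the remaining $s\geq 1/2$ hypothesis. Theorem~\ref{MT210} then yields $r_{\bm p} = r_y = r+1/2$, $r_z = r+3/2$, and $r_{\bm q} = r-1/2$, all of which satisfy the lower bounds in \eqref{eqn:s_rates_ineq}.

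For $k = 1$, since $r < 3/2$ the polynomial-degree truncations in \eqref{eqn:s_rates} are inactive: $s_{\bm p} = s_y = r+1/2$, $s_z = r+3/2$, $s_{\bm q} = r-1/2$. Substituting into Theorem~\ref{main_res} gives
$s_{\bm p} - 1/2 = s_z - 3/2 = s_{\bm q} + 1/2 = s_y - 1/2 = r$,
so the four error bounds for $u$, $y$, $\bm p$, and $z$ collapse to $h^r$ times the stated sum of norms. The extra estimate for $\bm q - \bm q_h$ requires the strict inequalities in \eqref{eqn:s_rates_ineq}, which here is equivalent to $r > 1/2$; the same arithmetic yields $s_{\bm p} - 1 = s_z - 2 = s_{\bm q} = s_y - 1 = r-1/2$, producing $h^{r-1/2}$.

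For $k = 0$ the truncations bite on the $\bm p$ and $z$ sides: $s_{\bm p} = \min\{r+1/2,1\} = 1$ and $s_z = \min\{r+3/2,2\} = 2$ (both using $r \geq 1/2$), while $s_y = r+1/2$ and $s_{\bm q} = r-1/2$ are unchanged. The four exponents become $s_{\bm p} - 1/2 = s_z - 3/2 = 1/2$ and $s_{\bm q} + 1/2 = s_y - 1/2 = r \geq 1/2$, so the minimum over the four terms is $h^{1/2}$, and the norms on the right become $\norm{\bm p}_{H^1(\Omega)}$, $\norm{z}_{H^2(\Omega)}$, $\norm{\bm q}_{H^{r-1/2}(\Omega)}$, $\norm{y}_{H^{r+1/2}(\Omega)}$, matching the statement. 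No analytical obstacle arises: the entire argument is a tabulation. The only bookkeeping care needed is tracking which exponents are cut by the degree constraint $k+1$ versus retained at full Sobolev order, and confirming that all four terms in each estimate share the minimum rate so that the common $h^r$ (or $h^{1/2}$) factors out cleanly.
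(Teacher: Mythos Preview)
Your proposal is correct and mirrors the paper's approach: the paper offers no explicit proof of the corollary, introducing it only with ``Specializing to the 2D case gives the following result,'' and your argument is precisely that specialization---feeding the regularity indices $r_{\bm p}=r_y=r+\tfrac12$, $r_z=r+\tfrac32$, $r_{\bm q}=r-\tfrac12$ from Theorem~\ref{MT210} into the abstract bounds of Theorem~\ref{main_res} and tabulating the resulting exponents for $k=1$ and $k=0$. The arithmetic and the handling of the polynomial-degree truncations are all correct.
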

As in Part I, when $ k = 1 $ the convergence rates are optimal for the control and the flux $ \bm q $ and suboptimal for the other variables.  When $ k = 0 $ the convergence rates for all variables are suboptimal with one exception:  If $ y_d \in L^2(\Omega) $ only so that $ t^* = 0 $, then $ u \in H^{1/2}(\Gamma) $ only and the convergence rate for the control is optimal.  Also, if $ r_d $ or $ r_\Omega $ is near $ 1/2 $, then the convergence rate is nearly optimal for the control in the $ k = 0 $ case.

\subsection{Preliminary material I}
\label{sec:Projectionoperator}

We split the preliminary material required for the proof into two parts.  First, we give a brief overview of material closely related to the preliminary material in Part I: $ L^2 $ projections, HDG operators $ \mathscr B_1 $ and $ \mathscr B_2 $, and the well-posedness of the HDG equations.

As in Part I, we use the standard $L^2$ projections $\bm\Pi :  [L^2(\Omega)]^d \to \bm V_h$, $\Pi :  L^2(\Omega) \to W_h$, and $P_M:  L^2(\varepsilon_h) \to M_h$, which satisfy
\begin{equation}\label{L2_projection}
\begin{split}
(\bm\Pi \bm q,\bm r)_{K} &= (\bm q,\bm r)_{K} ,\qquad \forall \bm r\in [{\mathcal P}_{k}(K)]^d,\\
(\Pi y,w)_{K}  &= (y,w)_{K} ,\qquad \forall w\in \mathcal P_{k+1}(K),\\
\left\langle P_M m, \mu\right\rangle_{ e} &= \left\langle  m, \mu\right\rangle_{e }, \quad\;\;\; \forall \mu\in \mathcal P_{k+1}(e).
\end{split}
\end{equation}
We have the following bounds:
\begin{subequations}\label{classical_ine}
	\begin{align}
	&\norm {\bm q -\bm{\Pi q}}_{\mathcal T_h} \lesssim h^{s_{\bm q}} \norm{\bm q}_{s_{\bm q},\Omega},\quad\quad\,\, \norm {y -{\Pi y}}_{\mathcal T_h} \lesssim h^{s_{y}} \norm{y}_{s_{y},\Omega},\\
	&\norm {y -{\Pi y}}_{\partial\mathcal T_h} \lesssim h^{s_{y}-\frac 1 2} \norm{y}_{s_{y},\Omega},
	\quad \norm {w}_{\partial \mathcal T_h} \lesssim h^{-\frac 12} \norm {w}_{ \mathcal T_h}, \: \forall w\in W_h,
	\end{align}
\end{subequations}
and similar projection error bounds for $\bm p$ and $z$.

In this paper, we do not use the same HDG formulation for the analysis that we used in Part I.  We define the HDG operators $ \mathscr B_1$ and $ \mathscr B_2 $ by
\begin{align}
\hspace{3em}&\hspace{-3em} \mathscr  B_1( \bm q_h,y_h,\widehat y_h^o;\bm r_1,w_1,\mu_1) \nonumber\\
& = (\bm q_h,\bm r_1)_{\mathcal T_h}-( y_h,\nabla\cdot\bm r_1)_{\mathcal T_h}+\langle \widehat y_h^o,\bm r_1\cdot\bm n\rangle_{\partial\mathcal T_h\backslash \varepsilon_h^\partial}+(\nabla\cdot \bm q_h,w_1)_{\mathcal{T}_h} \nonumber\\ 
& \quad -(\bm \beta y_h,  \nabla w_1)_{\mathcal T_h}-(\nabla\cdot\bm\beta y_h,w_1)_{\mathcal T_h}+\langle h^{-1}y_h +\tau_1 y_h,w_1\rangle_{\partial\mathcal T_h}\nonumber\\
& \quad +\langle (\bm\beta\cdot\bm n -h^{-1}-\tau_1) \widehat y_h^o,w_1\rangle_{\partial\mathcal T_h\backslash \varepsilon_h^\partial}\nonumber\\
&\quad -\langle \bm q_h\cdot \bm n+\bm \beta\cdot\bm n\widehat y_h^o +h^{-1}(y_h-\widehat y_h^o) + \tau_1(y_h - \widehat y_h^o),\mu_1\rangle_{\partial\mathcal T_h\backslash\varepsilon^{\partial}_h},\label{def_B1}\\
\hspace{3em}&\hspace{-3em} \mathscr B_2 (\bm p_h,z_h,\widehat z_h^o;\bm r_2, w_2,\mu_2)\nonumber\\
&=(\bm p_h,\bm r_2)_{\mathcal T_h}-( z_h,\nabla\cdot\bm r_2)_{\mathcal T_h}+\langle \widehat z_h^o,\bm r_2\cdot\bm n\rangle_{\partial\mathcal T_h\backslash\varepsilon_h^\partial}+(\nabla\cdot\bm p_h,w_2)\nonumber\\
& \quad +(\bm \beta z_h,  \nabla w_2)_{\mathcal T_h}+\langle h^{-1} z_h +\tau_2 z_h ,w_2\rangle_{\partial\mathcal T_h} \nonumber\\
&\quad-\langle (\bm \beta\cdot\bm n + h^{-1}+\tau_2)\widehat z_h^o ,w_2\rangle_{\partial\mathcal T_h\backslash\varepsilon_h^\partial}\nonumber\\
& \quad -\langle  {\bm p}_h\cdot\bm n-\bm \beta\cdot\bm n\widehat z_h^o +h^{-1}(z_h-\widehat z_h^o)+\tau_2(z_h - \widehat z_h^o),\mu_2\rangle_{\partial\mathcal T_h\backslash\varepsilon^{\partial}_h}\label{def_B2}.
\end{align}
We emphasize that this is an equivalent definition to the one given in Part I that is more appropriate to obtain error estimates in the low regularity case.

We rewrite the HDG formulation of the optimality system \eqref{HDG_discrete2} in terms of the HDG operators $\mathscr B_1$ and $\mathscr B_2$: find $({\bm{q}}_h,{\bm{p}}_h,y_h,z_h,\widehat y_h^o,\widehat z_h^o,u_h)\in \bm{V}_h\times\bm{V}_h\times W_h \times W_h\times M_h(o)\times M_h(o)\times M_h(\partial)$ satisfying
\begin{subequations}\label{HDG_full_discrete}
	\begin{align}
	\mathscr B_1(\bm q_h,y_h,\widehat y_h^o;\bm r_1,w_1,\mu_1)&=( f, w_1)_{\mathcal T_h} - \langle u_h,\bm r_1\cdot\bm n\rangle_{\varepsilon_h^\partial} \nonumber \\
	&\quad-\langle (\bm\beta\cdot\bm n-h^{-1}-\tau_1) u_h, w_1\rangle_{\varepsilon_h^\partial},\label{HDG_full_discrete_a}\\
	\mathscr B_2(\bm p_h,z_h,\widehat z_h^o;\bm r_2,w_2,\mu_2)&=(y_h-y_d,w_2)_{\mathcal T_h},\label{HDG_full_discrete_b}\\
	\gamma^{-1}\langle {\bm{p}}_h\cdot \bm{n} + h^{-1} z_h + \tau_2 z_h, \mu_3\rangle_{{{\varepsilon_h^{\partial}}}} &= -\langle u_h, \mu_3 \rangle_{{\varepsilon_h^{\partial}}}, \label{HDG_full_discrete_e}
	\end{align}
\end{subequations}
for all $\left(\bm{r}_1, \bm{r}_2, w_1, w_2, \mu_1, \mu_2, {\mu}_3\right)\in \bm V_h\times\bm V_h\times W_h\times W_h\times M_h(o)\times M_h(o)\times M_h(\partial) $.

For the convenience of the reader, we recall three results proven in Part I.
\begin{lemma}\label{property_B}
	For any $ ( \bm v_h, w_h, \mu_h ) \in \bm V_h \times W_h \times M_h $, we have
	\begin{align*}
	\hspace{2em}&\hspace{-2em} \mathscr B_1(\bm v_h,w_h,\mu_h;\bm v_h,w_h,\mu_h)\\
	&=(\bm v_h,\bm v_h)_{\mathcal T_h}+ \langle (h^{-1}+\tau_1 - \frac 12 \bm \beta\cdot\bm n)(w_h-\mu_h),w_h-\mu_h\rangle_{\partial\mathcal T_h\backslash \varepsilon_h^\partial}\\
	&\quad-\frac 1 2(\nabla\cdot\bm\beta w_h,w_h)_{\mathcal T_h} +\langle (h^{-1}+\tau_1-\frac12\bm \beta\cdot\bm n) w_h,w_h\rangle_{\varepsilon_h^\partial},\\
	\hspace{2em}&\hspace{-2em}\mathscr B_2(\bm v_h,w_h,\mu_h;\bm v_h,w_h,\mu_h)\\
	&=(\bm v_h,\bm v_h)_{\mathcal T_h}+ \langle (h^{-1}+\tau_2 + \frac 12 \bm \beta\cdot\bm n)(w_h-\mu_h),w_h-\mu_h\rangle_{\partial\mathcal T_h\backslash \varepsilon_h^\partial}\\
	&\quad-\frac 1 2(\nabla\cdot\bm\beta w_h,w_h)_{\mathcal T_h} +\langle (h^{-1}+\tau_2+\frac12\bm \beta\cdot\bm n) w_h,w_h\rangle_{\varepsilon_h^\partial}.
	\end{align*}
\end{lemma}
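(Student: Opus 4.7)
The plan is to verify both identities by direct algebraic manipulation: substitute the same triple $(\bm v_h,w_h,\mu_h)$ into both slots of the bilinear form, apply elementwise integration by parts to the convective volume term, and complete the square in the jump $w_h-\mu_h$ to regroup the boundary contributions. I will describe the argument for $\mathscr B_1$ in detail; the argument for $\mathscr B_2$ differs only in a handful of signs.

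After substituting $(\bm r_1,w_1,\mu_1)=(\bm v_h,w_h,\mu_h)$ into \eqref{def_B1}, all terms that pair $\bm v_h$ with the scalar variables cancel: $-(w_h,\nabla\cdot\bm v_h)_{\mathcal T_h}$ cancels $(\nabla\cdot\bm v_h,w_h)_{\mathcal T_h}$, and $\langle \mu_h,\bm v_h\cdot\bm n\rangle_{\partial\mathcal T_h\setminus\varepsilon_h^\partial}$ cancels the $-\langle\bm v_h\cdot\bm n,\mu_h\rangle_{\partial\mathcal T_h\setminus\varepsilon_h^\partial}$ coming from the last line of \eqref{def_B1}. What survives from these pieces is only $(\bm v_h,\bm v_h)_{\mathcal T_h}$.

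Next I apply elementwise integration by parts to the convective volume term,
\[
-(\bm\beta w_h,\nabla w_h)_{\mathcal T_h} \;=\; \tfrac12(\nabla\cdot\bm\beta\,w_h,w_h)_{\mathcal T_h} \;-\; \tfrac12\langle \bm\beta\cdot\bm n\,w_h,w_h\rangle_{\partial\mathcal T_h},
\]
and combine with the remaining $-(\nabla\cdot\bm\beta\,w_h,w_h)_{\mathcal T_h}$ to produce the claimed volume term $-\tfrac12(\nabla\cdot\bm\beta\,w_h,w_h)_{\mathcal T_h}$ together with an auxiliary boundary piece $-\tfrac12\langle\bm\beta\cdot\bm n\,w_h,w_h\rangle_{\partial\mathcal T_h}$. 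I then split $\partial\mathcal T_h=(\partial\mathcal T_h\setminus\varepsilon_h^\partial)\cup\varepsilon_h^\partial$ and collect all boundary contributions. On $\varepsilon_h^\partial$ the surviving pieces sum directly to the claimed $\langle(h^{-1}+\tau_1-\tfrac12\bm\beta\cdot\bm n)w_h,w_h\rangle_{\varepsilon_h^\partial}$. On interior faces I expand $w_h^2=(w_h-\mu_h)^2+2\mu_h(w_h-\mu_h)+\mu_h^2$ to complete the square; after the expected cancellations this leaves exactly $\langle(h^{-1}+\tau_1-\tfrac12\bm\beta\cdot\bm n)(w_h-\mu_h)^2\rangle_{\partial\mathcal T_h\setminus\varepsilon_h^\partial}$. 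The key observation enabling this step is that $\langle\bm\beta\cdot\bm n\,\mu_h^2\rangle_{\partial\mathcal T_h\setminus\varepsilon_h^\partial}=0$, since $\mu_h$ is single-valued on each interior face while $\bm\beta\cdot\bm n$ flips sign across that face (using $\bm\beta\in[C(\overline\Omega)]^d$ from \eqref{eqn:beta_assumptions2}).

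The $\mathscr B_2$ identity is proved by the same plan, but with two sign changes. The convective volume term $+(\bm\beta z_h,\nabla w_h)_{\mathcal T_h}$ in \eqref{def_B2} produces $-\tfrac12(\nabla\cdot\bm\beta\,w_h,w_h)_{\mathcal T_h}$ directly from integration by parts (there is no companion $-(\nabla\cdot\bm\beta\,w_h,w_h)$ term in \eqref{def_B2} to combine with), and the $\bm\beta\cdot\bm n$ contributions on the boundary come with the opposite sign relative to $\mathscr B_1$. Carrying these sign flips through the jump-completion step produces $+\tfrac12\bm\beta\cdot\bm n$ in place of $-\tfrac12\bm\beta\cdot\bm n$ in both the interior-face and exterior-face weights. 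There is no conceptual obstacle; the only challenge is sign bookkeeping and tracking which interior contributions vanish by single-valuedness of $\mu_h$.
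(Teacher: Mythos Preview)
Your proof is correct. The paper does not actually prove this lemma here; it merely recalls the statement and cites Part~I \cite{HuMateosSinglerZhangZhang1} for the proof, so there is no in-paper argument to compare against. Your direct computation---cancelling the flux terms, applying elementwise integration by parts to the convective volume term, completing the square in $w_h-\mu_h$, and using single-valuedness of $\mu_h$ on interior faces to kill $\langle\bm\beta\cdot\bm n\,\mu_h^2\rangle_{\partial\mathcal T_h\setminus\varepsilon_h^\partial}$---is the standard route and is almost certainly what Part~I does as well.
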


\begin{lemma}\label{identical_equa}
	If \textbf{(A2)} holds, then
	$$\mathscr B_1 (\bm q_h,y_h,\widehat y_h^o;\bm p_h,-z_h,-\widehat z_h^o) + \mathscr B_2 (\bm p_h,z_h,\widehat z_h^o;-\bm q_h,y_h,\widehat y_h^o) = 0.$$
\end{lemma}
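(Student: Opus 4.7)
The plan is to expand both bilinear forms term by term using \eqref{def_B1} and \eqref{def_B2} and show that every contribution either cancels pairwise or vanishes thanks to \textbf{(A2)} together with the single-valuedness of the interior traces $\widehat y_h^o,\widehat z_h^o\in M_h(o)$. First I would substitute the test functions $(\bm p_h,-z_h,-\widehat z_h^o)$ into $\mathscr B_1$ and $(-\bm q_h,y_h,\widehat y_h^o)$ into $\mathscr B_2$. The symmetric $L^2(\Omega)$ pairings cancel on the spot: $(\bm q_h,\bm p_h)_{\mathcal T_h}-(\bm p_h,\bm q_h)_{\mathcal T_h}=0$, $-(y_h,\nabla\cdot\bm p_h)_{\mathcal T_h}+(\nabla\cdot\bm p_h,y_h)_{\mathcal T_h}=0$, and $-(\nabla\cdot\bm q_h,z_h)_{\mathcal T_h}+(z_h,\nabla\cdot\bm q_h)_{\mathcal T_h}=0$. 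The two $\langle\bm p_h\cdot\bm n,\widehat y_h^o\rangle$ contributions (one from the $\bm r_1$-line of $\mathscr B_1$, one from the $\mu_2$-line of $\mathscr B_2$) cancel immediately, and so do the two $\langle\bm q_h\cdot\bm n,\widehat z_h^o\rangle$ contributions.

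Next I would collapse the three convection volume terms by elementwise integration by parts. Using $\nabla\cdot(\bm\beta y_h z_h)=\bm\beta\cdot\nabla y_h\,z_h+y_h\,\bm\beta\cdot\nabla z_h+(\nabla\cdot\bm\beta)\,y_h z_h$ on each $K\in\mathcal T_h$ gives
\[
(\bm\beta y_h,\nabla z_h)_{\mathcal T_h}+(\bm\beta z_h,\nabla y_h)_{\mathcal T_h}+(\nabla\cdot\bm\beta\,y_h,z_h)_{\mathcal T_h}=\langle\bm\beta\cdot\bm n\,y_h,z_h\rangle_{\partial\mathcal T_h},
\]
so every remaining contribution is a boundary integral. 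I would then split $\langle\cdot,\cdot\rangle_{\partial\mathcal T_h}=\langle\cdot,\cdot\rangle_{\partial\mathcal T_h\setminus\varepsilon_h^\partial}+\langle\cdot,\cdot\rangle_{\varepsilon_h^\partial}$ where needed and regroup the surviving terms according to the four product types $y_h z_h$, $y_h\widehat z_h^o$, $z_h\widehat y_h^o$, and $\widehat y_h^o\widehat z_h^o$. A short calculation shows the coefficients in front of the first three collapse to $\pm(\tau_1-\tau_2-\bm\beta\cdot\bm n)$, each of which vanishes pointwise by \textbf{(A2)} on both interior and boundary faces. The coefficient in front of $\widehat y_h^o\widehat z_h^o$ simplifies to $2\bm\beta\cdot\bm n-\tau_1+\tau_2=\bm\beta\cdot\bm n$, again using \textbf{(A2)}.

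The only surviving piece is thus $\langle\bm\beta\cdot\bm n\,\widehat y_h^o,\widehat z_h^o\rangle_{\partial\mathcal T_h\setminus\varepsilon_h^\partial}$. Since $\widehat y_h^o,\widehat z_h^o\in M_h(o)$ are single-valued on each interior face $e$ shared by two elements $K^\pm$ whose outward normals satisfy $\bm n_{K^+}=-\bm n_{K^-}$ on $e$, the two adjacent contributions annihilate and the sum vanishes, giving the asserted identity. The main obstacle is purely one of bookkeeping: tracking signs and correctly separating the all-of-$\partial\mathcal T_h$ terms from those supported only on interior faces; once the coefficient table is right, every cancellation is an immediate algebraic consequence of \textbf{(A2)} or of the single-valuedness of the interior trace spaces.
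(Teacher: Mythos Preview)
Your argument is correct. The paper does not actually prove this lemma here; it is listed among ``three results proven in Part I'' and simply recalled without proof, so there is no in-paper argument to compare against. Your direct expansion, elementwise integration by parts on the convection terms, and coefficient matching via \textbf{(A2)} is exactly the standard route and is how such HDG adjoint identities are established; the only nontrivial step---the residual $\langle\bm\beta\cdot\bm n\,\widehat y_h^o,\widehat z_h^o\rangle_{\partial\mathcal T_h\setminus\varepsilon_h^\partial}$ vanishing by single-valuedness of the interior traces together with the continuity of $\bm\beta$ from \eqref{eqn:beta_assumptions2}---is handled correctly.
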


\begin{proposition}\label{ex_uni}
	If \textbf{(A2)} holds, there exists a unique solution of the HDG equations \eqref{HDG_full_discrete}.
\end{proposition}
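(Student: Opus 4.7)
\medskip

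\noindent\emph{Proof sketch.} The system \eqref{HDG_full_discrete} is a square linear system on the finite-dimensional space $\bm V_h\times\bm V_h\times W_h\times W_h\times M_h(o)\times M_h(o)\times M_h(\partial)$, so existence and uniqueness are equivalent; it therefore suffices to show that the difference of two solutions, which satisfies \eqref{HDG_full_discrete} with $f=0$ and $y_d=0$, must vanish. Denote this difference by $(\bm q_h,\bm p_h,y_h,z_h,\widehat y_h^o,\widehat z_h^o,u_h)$. The plan is to derive an energy identity that couples $u_h$ and $y_h$ by combining the three equations with cleverly chosen test functions, and then to invoke the coercivity content of \Cref{property_B} to conclude that the remaining variables vanish.

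\medskip

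\noindent\emph{Energy identity.} Test \eqref{HDG_full_discrete_a} with $(\bm r_1,w_1,\mu_1)=(\bm p_h,-z_h,-\widehat z_h^o)$ and \eqref{HDG_full_discrete_b} with $(\bm r_2,w_2,\mu_2)=(-\bm q_h,y_h,\widehat y_h^o)$; by \Cref{identical_equa}, summing these two equations cancels the left-hand side exactly. Assumption \textbf{(A2)} gives $\bm\beta\cdot\bm n-h^{-1}-\tau_1=-(h^{-1}+\tau_2)$, which consolidates the $u_h$-boundary terms from \eqref{HDG_full_discrete_a} into $-\langle u_h,\bm p_h\cdot\bm n+(h^{-1}+\tau_2)z_h\rangle_{\varepsilon_h^\partial}$, while \eqref{HDG_full_discrete_b} contributes $\|y_h\|^2_{\mathcal T_h}$ on the right. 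Testing \eqref{HDG_full_discrete_e} with $\mu_3=u_h$ shows this boundary pairing equals $-\gamma\|u_h\|^2_{\varepsilon_h^\partial}$, yielding
\begin{equation*}
\gamma\|u_h\|^2_{\varepsilon_h^\partial}+\|y_h\|^2_{\mathcal T_h}=0,
\end{equation*}
so that $u_h=0$ and $y_h=0$.

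\medskip

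\noindent\emph{Coercivity and the main obstacle.} With $u_h=0$ and $y_h=0$, equation \eqref{HDG_full_discrete_a} reduces to $\mathscr B_1(\bm q_h,0,\widehat y_h^o;\cdot)\equiv 0$; self-testing with $(\bm q_h,0,\widehat y_h^o)$ and applying \Cref{property_B} together with \eqref{eqn:tau1_condition} gives $\bm q_h=0$ and $\widehat y_h^o=0$. Analogously, \eqref{HDG_full_discrete_b} reduces to $\mathscr B_2(\bm p_h,z_h,\widehat z_h^o;\cdot)\equiv 0$; self-testing, \Cref{property_B}, assumption \textbf{(A3)}, and $\nabla\cdot\bm\beta\le 0$ produce $\bm p_h=0$ together with $z_h=\widehat z_h^o$ on interior faces and $z_h=0$ on $\varepsilon_h^\partial$. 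To propagate $z_h\equiv 0$ throughout $\Omega$, return to the flux equation \eqref{HDG_discrete2_c}: with $\bm p_h=0$, element-wise integration by parts combined with the face conditions just obtained reduces it to $(\nabla z_h,\bm r_2)_{\mathcal T_h}=0$ for every $\bm r_2\in\bm V_h$, and choosing $\bm r_2=\nabla z_h\in\bm V_h$ forces $z_h$ to be piecewise constant; the vanishing boundary trace and single-valued interior trace $\widehat z_h^o$ then propagate $z_h\equiv 0$ from the boundary elements throughout $\Omega$, yielding also $\widehat z_h^o=0$. I expect the main obstacle to be the sign bookkeeping in the energy identity — assembling the $u_h$-boundary terms from \eqref{HDG_full_discrete_a}, using \textbf{(A2)} to convert them into exactly the combination appearing in \eqref{HDG_full_discrete_e}, and arriving at a positive $\gamma\|u_h\|^2_{\varepsilon_h^\partial}$ rather than an indefinite sign requires careful tracking. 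Once that identity is in hand, the coercivity provided by \Cref{property_B} finishes the argument routinely.
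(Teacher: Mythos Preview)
Your argument is correct. The paper does not reprove this proposition here; it is simply recalled from Part~I, so there is no proof to compare against directly. Your route --- derive the coupling identity $\gamma\|u_h\|^2_{\varepsilon_h^\partial}+\|y_h\|^2_{\mathcal T_h}=0$ via \Cref{identical_equa} with the cross-tests $(\bm p_h,-z_h,-\widehat z_h^o)$ and $(-\bm q_h,y_h,\widehat y_h^o)$, then use the energy identities of \Cref{property_B} and the flux equation to kill the remaining unknowns --- is the standard one, and it is exactly the manipulation the paper itself employs later in Step~6 of the error analysis (see the lemma immediately following \eqref{eq_yh}). Your sign bookkeeping for the boundary terms is correct: under \textbf{(A2)}, $\bm\beta\cdot\bm n-h^{-1}-\tau_1=-(h^{-1}+\tau_2)$, so testing \eqref{HDG_full_discrete_a} with $w_1=-z_h$ produces precisely the combination in \eqref{HDG_full_discrete_e}, and the sign of $\gamma\|u_h\|^2_{\varepsilon_h^\partial}$ comes out positive.

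One small remark: your coercivity step invokes \textbf{(A3)} and its consequence \eqref{eqn:tau1_condition}, whereas the proposition as stated singles out only \textbf{(A2)}. This is not a gap --- \textbf{(A1)}--\textbf{(A3)} are standing assumptions on the stabilization functions throughout the paper, and the proposition merely highlights \textbf{(A2)} as the ingredient enabling \Cref{identical_equa}. But it is worth being aware that the positivity of $h^{-1}+\tau_1-\tfrac12\bm\beta\cdot\bm n$ and $h^{-1}+\tau_2+\tfrac12\bm\beta\cdot\bm n$, which you use to conclude $\widehat y_h^o=0$ and $z_h=\widehat z_h^o$, does rely on \textbf{(A3)}.
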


\subsection{Preliminary material II}
\label{sec:Projectionoperator2}
Next, we discuss preliminary material that is directly related to the low regularity case considered in this paper: the interpolation operators $\mathcal I_h^0$, $\mathcal I_h^1$,  $\mathcal I_h$ and their properties.

Recall we assume the primary flux $ \bm q $ only satisfies $ \bm q \in [H^{r_{\bm q}}(\Omega)]^d \cap H(\mathrm{div},\Omega) $, where $r_{\bm q}\ge 0$.  Therefore, the quantity $\norm {\bm q \cdot \bm n -\bm{\Pi q}\cdot\bm n}_{\partial\mathcal T_h}$ is not well defined and the HDG analysis technique used in Part I is not applicable.  We use analysis techniques from \cite{MR3508837,LiXie16_SINUM} to avoid using the $ L^2 $ boundary trace of $ \bm q $. Let us  introduce some notation first.

Define the $H^1$-conforming piecewise linear finite element space $W_h^c$ by
\begin{align*}
W_h^c:= \{ w_h^c\in H_0^1(\Omega):w_h^c|_K\in \mathcal{P}_1(K), \  \forall K\in \mathcal{T}_h \} .
\end{align*}
For any $K\in \mathcal{T}_h$, let $\lambda_1$, $\lambda_2$, $\ldots$, $\lambda_{d+1}$ denote the standard barycentric coordinate functions defined on the simplex $K$. Define
\begin{align} \label{def_Gamma}
\mathbb S(K):=S_1(K)+S_2(K)+\cdots+S_{d+1}(K),
\end{align}
where
\begin{align*}
S_i(K): =\bigg( \prod_{j \neq i} \lambda_j  \bigg) \text{span} \bigg\{ \prod_j \lambda_j^{\alpha_j}:\sum_j \alpha_j=k,~\alpha_i =0  \bigg\}, \ \  i=1,2,\ldots,d+1.
\end{align*}

Now we define the interpolations operators $\mathcal I_h^0$, $\mathcal I_h^1$,  $\mathcal I_h$.  First, define $m_K: L^2(\partial K)\rightarrow \mathbb{R}$ by
\begin{align}\label{mean_mu}
m_K(\mu):=\frac{1}{d+1}\sum_{e\in \partial K} \frac{1}{|e|}\int_e \mu,
\end{align}
where $|e|$ denotes the $d-1$ dimensional Hausdorff measure of $ e $.  Next, the interpolation operator $\mathcal I_h^0:L^2(\varepsilon_h)\rightarrow W_h^c$ is defined as follows:
\begin{align*}
\mathcal I_h^0 \mu(a)  = 
\begin{cases}
\displaystyle{ \frac{1}{\# \omega_a} \sum_{K\in \omega_a}m_K(\mu) } & \mbox{if $a$ is an interior node of $\mathcal{T}_h$,} \\
\qquad\qquad  0  &  \mbox{if $a$ is a boundary node of $ \mathcal{T}_h $,}
\end{cases}
\end{align*}
where $\omega_a:=\{ K\in\mathcal{T}_h:a~\text{is a vertex of }K \}$ and $\# \omega_a$ denotes the number of elements in $\omega_a$.

Next, the interpolation operator $\mathcal I_h^1$ on $L^2(\Omega)\times L^2(\varepsilon_h)$ is defined elementwise as follows: for each $ K $, 
\begin{align*}
\mathcal I_h^1(w,\mu)|_K :=\mathcal I_K^1(w,\mu) = w_1 + w_2,
\end{align*}
where $(w_1,w_2)\in \mathbb S(K)\times  (\prod_j \lambda_j)\mathcal{P}_k(K)$ is uniquely determined by
\begin{align*}
\langle w_1,m\rangle_e &=\langle \mu,m\rangle_e,\\
(w_2,n)_K &= (w-w_1, n)_K,
\end{align*}
for all $(m,n)\in\mathcal P_k(e)\times \mathcal P_k(K)$ and $e\in\partial K$.

Finally, for $(w,\mu)\in L^2(\Omega)\times L^2(\varepsilon_h)$,  we define the third interpolation operator $ \mathcal I_h $ by
\begin{align*}
\mathcal I_h(w,\mu) :=\mathcal I_h^0\mu +\mathcal I_h^1(w-\mathcal I_h^0 \mu,\mu -\mathcal I_h^0 \mu).
\end{align*}
It is straightforward to verify that $\mathcal I_h$ and $\mathcal I_h^1$ have the following  properties; see \cite{MR3508837,LiXie16_SINUM}.
\begin{lemma} \label{interpolation_projection}
	For any $(w,\mu)\in L^2(\Omega)\times L^2(\varepsilon_h)$ and $K\in \mathcal{T}_h$, we have
	\begin{subequations}
		\begin{align}
		(\mathcal I_h(w,\mu),n)_K&=(w,n)_K,\label{interpolation_projection1}\\
		\langle \mathcal I_h (w,\mu),m \rangle_{\partial K}&=\langle \mu,m \rangle_{\partial K},\label{interpolation_projection2}
		\end{align}
	\end{subequations}
	for all $(m,n)\in\mathcal P_k(e)\times \mathcal P_k(K)$ and $e\in\partial K$, and 
	\begin{align}\label{basic_err_est}
	\|\mathcal I_h^1(w,\mu)\|_K &\lesssim \|w\|_{K} + h^{\frac 1 2} \|\mu\|_{\partial K}.
	\end{align}
	Moreover, if $\mu|_{\Gamma} = 0$,  we have 
	\begin{align}\label{H01}
	\mathcal I_h(w,\mu) \in H_0^1(\Omega).
	\end{align}
\end{lemma}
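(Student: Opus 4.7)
The plan is to verify each assertion by unpacking the defining conditions of $\mathcal I_h^0$, $\mathcal I_h^1$, and $\mathcal I_h$; each orthogonality statement was built directly into the construction, so the arguments are mostly a matter of bookkeeping, with the genuinely interesting point being the $H_0^1$ conformity \eqref{H01}. Throughout I abbreviate $\tilde w = w - \mathcal I_h^0\mu$, $\tilde\mu = \mu - \mathcal I_h^0\mu$, and write $\mathcal I_h^1(\tilde w,\tilde\mu)|_K = \tilde w_1 + \tilde w_2$ with $\tilde w_1 \in \mathbb S(K)$ and $\tilde w_2 \in (\prod_j \lambda_j)\mathcal P_k(K)$ as in the definition of $\mathcal I_h^1$.

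For \eqref{interpolation_projection1}, the condition defining $\tilde w_2$ gives $(\tilde w_1 + \tilde w_2, n)_K = (\tilde w, n)_K$ for all $n \in \mathcal P_k(K)$, and adding $(\mathcal I_h^0\mu, n)_K$ to both sides yields $(\mathcal I_h(w,\mu),n)_K = (w,n)_K$. For \eqref{interpolation_projection2}, I observe that $\tilde w_2$ vanishes on $\partial K$ because it carries the bubble factor $\prod_j\lambda_j$, while the face moment condition $\langle \tilde w_1, m\rangle_e = \langle \tilde\mu, m\rangle_e$ together with the single-valuedness of $\mathcal I_h^0\mu$ on $e$ yields $\langle \mathcal I_h(w,\mu), m\rangle_{\partial K} = \langle \mu, m\rangle_{\partial K}$.

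For the stability bound \eqref{basic_err_est}, I would scale to a reference simplex $\hat K$. The analogous local map on $\hat K$ is bounded and linear between finite-dimensional spaces, so norm equivalence on $\hat K$ produces a bound of the form $\|\cdot\|_{\hat K}\lesssim \|\hat w\|_{\hat K}+\|\hat\mu\|_{\partial \hat K}$; the standard $L^2$ scaling factors $h^{d/2}$ in the interior and $h^{(d-1)/2}$ on the boundary then transfer this to $K$ with exactly the $h^{1/2}$ weight in front of $\|\mu\|_{\partial K}$.

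The main obstacle is \eqref{H01}. Since $\mathcal I_h^0\mu \in W_h^c \subset H_0^1(\Omega)$ by construction and $\tilde w_2$ vanishes on $\partial K$ elementwise, the problem reduces to showing that $\tilde w_1$ is continuous across each interior face $e$ and vanishes on each boundary face when $\mu|_\Gamma=0$. The key observation is that on the face $e$ opposite vertex $i$ one has $S_j(K)|_e=0$ for $j\neq i$ because every basis function in $S_j(K)$ retains the factor $\lambda_i$, while $S_i(K)|_e = \phi_e\,\mathcal P_k(e)$, where $\phi_e$ is the face bubble obtained by restricting $\prod_{j\neq i}\lambda_j$ to $e$ and depends only on $e$, not on the element on either side. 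Therefore on both sides of $e$ the trace of $\tilde w_1$ lies in the same space $\phi_e\,\mathcal P_k(e)$ and is fixed by the same moment condition against $\mathcal P_k(e)$ driven by the single-valued $\tilde\mu|_e$, so it must be single-valued by unisolvence. If $\mu|_\Gamma=0$, then $\mathcal I_h^0\mu$ vanishes at boundary nodes so $\tilde\mu|_\Gamma=0$; writing $\tilde w_1|_e = \phi_e p$ on a boundary face and testing the moment identity against $m = p$ forces $\int_e\phi_e p^2 = 0$ and hence $\tilde w_1|_e = 0$. Combined with $\mathcal I_h^0\mu|_\Gamma=0$, this yields $\mathcal I_h(w,\mu)\in H_0^1(\Omega)$.
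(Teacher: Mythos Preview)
Your proof is correct and supplies precisely the straightforward verification the paper defers to the cited references \cite{MR3508837,LiXie16_SINUM} without writing out; the orthogonality identities follow directly from the construction, the stability bound from scaling, and your face-trace argument for \eqref{H01} (that only $S_i(K)$ survives on the face opposite vertex $i$, with trace $\phi_e\,\mathcal P_k(e)$ determined unisolvently by the face moments) is the standard and intended mechanism.
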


In the next three lemmas, we assume $(\bm v_h,w_h,\mu_h)\in \bm V_h\times W_h\times M_h$ satisfy 
\begin{align}\label{first_error_eq}
(\bm v_h, \bm r)_{\mathcal{T}_h}-(w_h,\nabla\cdot \bm r)_{\mathcal{T}_h}+\langle \mu_h,\bm r\cdot \bm n \rangle_{\partial \mathcal{T}_h}=0,
\end{align}
for all $\bm r\in \bm V_h$.

We begin with a key inequality; see Part I \cite[Lemma 4.7]{HuMateosSinglerZhangZhang1} and also \cite{MR3440284}.
\begin{lemma}\label{nabla_ine}
	If $(\bm v_h,w_h,\mu_h)\in \bm V_h\times W_h\times M_h$ satisfy \eqref{first_error_eq}, then
	\begin{align} \label{error_grad}
	\| \nabla w_h \|_{\mathcal{T}_h} \lesssim \| \bm v_h \|_{\mathcal{T}_h} +h^{-\frac{1}{2}}\| w_h-\mu_h \|_{\partial \mathcal{T}_h}.
	\end{align}
\end{lemma}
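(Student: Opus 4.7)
The plan is to rewrite the hypothesis by integrating by parts element by element, so as to exhibit $\nabla w_h + \bm v_h$ as the $L^2$-orthogonal projection onto $\bm V_h$ of a quantity controlled by the jumps $w_h-\mu_h$. Concretely, on each $K\in\mathcal T_h$ one has $(w_h,\nabla\cdot\bm r)_K = -(\nabla w_h,\bm r)_K + \langle w_h,\bm r\cdot\bm n\rangle_{\partial K}$, so summing over elements turns \eqref{first_error_eq} into
\begin{align*}
(\nabla w_h + \bm v_h,\bm r)_{\mathcal T_h} = \langle w_h - \mu_h,\bm r\cdot\bm n\rangle_{\partial \mathcal T_h} \qquad \forall \bm r\in \bm V_h.
\end{align*}

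The second step is to test with a convenient choice in $\bm V_h$. Since $w_h|_K\in \mathcal P^{k+1}(K)$, we have $\nabla w_h|_K \in [\mathcal P^k(K)]^d$, and $\bm v_h|_K\in [\mathcal P^k(K)]^d$ already, so $\bm r := \nabla w_h + \bm v_h$ is an admissible test function. Substituting gives
\begin{align*}
\|\nabla w_h + \bm v_h\|_{\mathcal T_h}^2 = \langle w_h - \mu_h,(\nabla w_h + \bm v_h)\cdot\bm n\rangle_{\partial\mathcal T_h}.
\end{align*}

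The third step is to bound the right-hand side. Cauchy--Schwarz on $\partial\mathcal T_h$ followed by the standard discrete trace/inverse inequality $\|\bm r\cdot\bm n\|_{\partial\mathcal T_h}\lesssim h^{-1/2}\|\bm r\|_{\mathcal T_h}$ for $\bm r\in\bm V_h$ (which uses the quasi-uniformity of the mesh assumed in this section) yields
\begin{align*}
\|\nabla w_h + \bm v_h\|_{\mathcal T_h}^2 \lesssim h^{-1/2}\|w_h-\mu_h\|_{\partial\mathcal T_h}\,\|\nabla w_h + \bm v_h\|_{\mathcal T_h},
\end{align*}
so $\|\nabla w_h + \bm v_h\|_{\mathcal T_h} \lesssim h^{-1/2}\|w_h-\mu_h\|_{\partial\mathcal T_h}$. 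The triangle inequality then closes the argument:
\begin{align*}
\|\nabla w_h\|_{\mathcal T_h} \le \|\nabla w_h + \bm v_h\|_{\mathcal T_h} + \|\bm v_h\|_{\mathcal T_h} \lesssim \|\bm v_h\|_{\mathcal T_h} + h^{-1/2}\|w_h-\mu_h\|_{\partial\mathcal T_h}.
\end{align*}

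No step looks genuinely difficult: the only subtlety is checking that the polynomial degrees line up so that $\bm r=\nabla w_h+\bm v_h$ is a legitimate test function, which is exactly why the space $W_h$ was chosen one polynomial order higher than $\bm V_h$ in the HDG formulation. The proof is essentially the observation that the variational identity forces $\bm v_h = -\Pi_{\bm V_h}\nabla w_h$ plus a boundary-jump correction of size $h^{-1/2}\|w_h-\mu_h\|_{\partial\mathcal T_h}$.
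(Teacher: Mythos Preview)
Your proof is correct and is exactly the standard argument: integrate by parts to isolate $\nabla w_h+\bm v_h$, test with that same function (legitimate because $\nabla w_h|_K\in[\mathcal P^k(K)]^d$), apply the discrete trace inequality, and finish with the triangle inequality. The paper does not actually reprove this lemma here---it simply cites Part~I \cite[Lemma~4.7]{HuMateosSinglerZhangZhang1} and \cite{MR3440284}---and the argument in those references is precisely the one you wrote.
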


The next two results are similar to Lemma 3.4 and Lemma 3.6 in \cite{MR3508837}.  Here, we have a different space $ M_h $ (with polynomials of degree $ k+1 $ instead of $ k $) and we do not have a variable diffusion coefficient.  However, the proofs of the next two results are very similar to the proofs in \cite{MR3508837} and are omitted.
\begin{lemma} \label{error_auxilliary}
	If $(\bm v_h,w_h,\mu_h)\in \bm V_h\times W_h\times M_h$ satisfy \eqref{first_error_eq}, then
	\begin{align}
	\hspace{3em}&\hspace{-3em}  h^{-1} \sum_{K\in\mathcal T_h}\| w_h-m_K(\mu_h) \|_{K} + h^{-\frac 1 2} \sum_{K\in\mathcal T_h} \| \mu_h-m_K(\mu_h) \|_{\partial K }  \nonumber\\
	&\lesssim \| \bm v_h \|_{\mathcal T_h}+h^{-\frac{1}{2}}\| w_h-\mu_h \|_{\partial \mathcal T_h}.
	\end{align}
\end{lemma}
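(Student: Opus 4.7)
The plan is to obtain the bound elementwise and then sum in $\ell^2$ over $K \in \mathcal T_h$ (the natural reading of the LHS, consistent with the mesh norms on the right). On a single element $K$, I decompose $w_h - m_K(\mu_h) = (w_h - (w_h)_K) + ((w_h)_K - m_K(\mu_h))$, where $(w_h)_K = |K|^{-1}(w_h,1)_K$, so
\[
\|w_h - m_K(\mu_h)\|_K \le \|w_h - (w_h)_K\|_K + |K|^{1/2}\bigl|(w_h)_K - m_K(\mu_h)\bigr|.
\]
The first summand is controlled by the Poincar\'e--Wirtinger inequality $\|w_h - (w_h)_K\|_K \lesssim h\|\nabla w_h\|_K$, and then by the elementwise version of \Cref{nabla_ine}, whose proof, testing the integrated-by-parts form of \eqref{first_error_eq} against $\bm r = \nabla w_h \in [\mathcal P_k(K)]^d$, yields $\|\nabla w_h\|_K \lesssim \|\bm v_h\|_K + h^{-1/2}\|w_h - \mu_h\|_{\partial K}$.

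The heart of the argument is the scalar identity for $(w_h)_K - m_K(\mu_h)$. I would test \eqref{first_error_eq} on $K$ with $\bm r = (x - x_K)/d$, where $x_K$ is the barycenter of the simplex $K$. On each face $e \subset \partial K$ the quantity $\bm r\cdot\bm n$ is constant, equal to $c_e = |K|/((d+1)|e|)$, so
\[
\langle \mu_h, \bm r\cdot \bm n\rangle_{\partial K} = \sum_{e\subset \partial K}\frac{|K|}{(d+1)|e|}\int_e \mu_h = |K|\,m_K(\mu_h),
\]
while $\nabla\cdot\bm r = 1$ turns $(w_h,\nabla\cdot\bm r)_K$ into $|K|(w_h)_K$. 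Substituting into \eqref{first_error_eq} gives the clean identity $(w_h)_K - m_K(\mu_h) = |K|^{-1}(\bm v_h,\bm r)_K$, and Cauchy--Schwarz together with $\|\bm r\|_K \lesssim h|K|^{1/2}$ yields $|K|^{1/2}|(w_h)_K - m_K(\mu_h)| \lesssim h\|\bm v_h\|_K$. Multiplying by $h^{-1}$ and combining produces the elementwise estimate
\[
h^{-1}\|w_h - m_K(\mu_h)\|_K \lesssim \|\bm v_h\|_K + h^{-1/2}\|w_h - \mu_h\|_{\partial K}.
\]

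For the boundary piece I would use $\|\mu_h - m_K(\mu_h)\|_{\partial K} \le \|w_h - \mu_h\|_{\partial K} + \|w_h - m_K(\mu_h)\|_{\partial K}$ together with the discrete inverse trace inequality $\|w_h - m_K(\mu_h)\|_{\partial K} \lesssim h^{-1/2}\|w_h - m_K(\mu_h)\|_K$ (valid since $w_h - m_K(\mu_h) \in \mathcal P_{k+1}(K)$), giving
\[
h^{-1/2}\|\mu_h - m_K(\mu_h)\|_{\partial K} \lesssim h^{-1/2}\|w_h - \mu_h\|_{\partial K} + \|\bm v_h\|_K.
\]
Squaring both elementwise bounds, summing over $K \in \mathcal T_h$, and taking square roots yields the stated inequality, using quasi-uniformity to absorb the powers of $h$.

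The main obstacle is the geometric identity $\langle \mu_h,\bm r\cdot\bm n\rangle_{\partial K} = |K|\,m_K(\mu_h)$, which depends crucially on $K$ being a simplex so that $\bm r\cdot\bm n$ is constant on each face; this is precisely why the analysis here restricts to conforming simplicial triangulations as assumed for \Cref{main_res}. A secondary subtlety appears in the lowest-order case $k = 0$, in which $(x-x_K)/d \notin [\mathcal P_0(K)]^d$ and the test function is inadmissible; there one exploits that an affine $w_h$ already satisfies $(w_h)_K = \tfrac{1}{d+1}\sum_e \tfrac{1}{|e|}\int_e w_h$, which reduces $(w_h)_K - m_K(\mu_h)$ to a combination of face averages of $w_h - \mu_h$ and delivers the same elementwise bound.
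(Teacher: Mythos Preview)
Your proof is correct. The paper itself omits the proof of this lemma, pointing instead to Lemma~3.4 of \cite{MR3508837}, so there is no in-paper argument to compare against. Your approach---localizing \eqref{first_error_eq} to a single element, testing with $\bm r = (x-x_K)/d$ to obtain the exact identity $(w_h)_K - m_K(\mu_h) = |K|^{-1}(\bm v_h,\bm r)_K$, and combining this with Poincar\'e--Wirtinger and the elementwise version of \Cref{nabla_ine}---is clean and essentially the standard one in that reference; the geometric computation $\bm r\cdot\bm n_e = |K|/((d{+}1)|e|)$ on each face is precisely why the simplicial-mesh assumption is needed here. Your separate handling of the $k=0$ case via the affine identity $(w_h)_K = \tfrac{1}{d+1}\sum_e |e|^{-1}\int_e w_h$ is correct and necessary, and your reading of the left-hand side as an $\ell^2$ sum over $K$ (rather than the literal $\ell^1$ sum written in the statement) is the right interpretation, consistent with the mesh norms on the right and with how the estimate is used in \Cref{error_esti_inter}.
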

\begin{lemma}  \label{error_esti_inter}
	If $(\bm v_h,w_h,\mu_h)\in \bm V_h\times W_h\times M_h$ satisfy \eqref{first_error_eq}, then
	\begin{subequations}
		\begin{align}
		\|\nabla\mathcal I_h(w_h,\mu_h)\|_{\mathcal T_h} &\lesssim \|\bm v_h\|_{\mathcal{T}_h} +h^{-\frac{1}{2}}\| w_h-\mu_h\|_{\partial \mathcal{T}_h}, \label{H_1_error} \\
		h^{-1} \|w_h-\mathcal I_h (w_h,\mu_h)\|_{\mathcal T_h}  &\lesssim \|\bm v_h\|_{\mathcal{T}_h} +h^{-\frac{1}{2}}\| w_h-\mu_h\|_{\partial \mathcal{T}_h}. \label{L_2_error}
		\end{align}
	\end{subequations}
\end{lemma}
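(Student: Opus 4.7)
The plan is to prove \eqref{L_2_error} first, and then deduce \eqref{H_1_error} from it by a polynomial inverse inequality. Indeed, by the triangle inequality,
\begin{equation*}
\|\nabla\mathcal I_h(w_h,\mu_h)\|_{\mathcal T_h}\le\|\nabla w_h\|_{\mathcal T_h}+\|\nabla(w_h-\mathcal I_h(w_h,\mu_h))\|_{\mathcal T_h}.
\end{equation*}
The first term is bounded by Lemma \ref{nabla_ine}. Since $w_h-\mathcal I_h(w_h,\mu_h)$ is a piecewise polynomial of uniformly bounded degree, the inverse inequality $\|\nabla(\cdot)\|_K\lesssim h^{-1}\|\cdot\|_K$ reduces the second term to $h^{-1}\|w_h-\mathcal I_h(w_h,\mu_h)\|_{\mathcal T_h}$, which is controlled by \eqref{L_2_error}.

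For \eqref{L_2_error}, I would use the definition $\mathcal I_h(w_h,\mu_h)=\mathcal I_h^0\mu_h+\mathcal I_h^1(w_h-\mathcal I_h^0\mu_h,\mu_h-\mathcal I_h^0\mu_h)$ together with the basic bound \eqref{basic_err_est} applied to the second summand to obtain, elementwise,
\begin{equation*}
\|w_h-\mathcal I_h(w_h,\mu_h)\|_K\lesssim\|w_h-\mathcal I_h^0\mu_h\|_K+h^{1/2}\|\mu_h-\mathcal I_h^0\mu_h\|_{\partial K}.
\end{equation*}
Inserting $\pm\, m_K(\mu_h)$ in each summand and using the triangle inequality reduces the task to estimating three families of quantities: $\|w_h-m_K(\mu_h)\|_K$, $\|\mu_h-m_K(\mu_h)\|_{\partial K}$, and the comparison terms $\|m_K(\mu_h)-\mathcal I_h^0\mu_h\|_K$ and $\|m_K(\mu_h)-\mathcal I_h^0\mu_h\|_{\partial K}$. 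The first two families are precisely what Lemma \ref{error_auxilliary} controls in the desired scaling.

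The main obstacle is estimating the comparison terms $\|m_K(\mu_h)-\mathcal I_h^0\mu_h\|_{K}$ (and its boundary analogue). Since $\mathcal I_h^0\mu_h$ is piecewise linear with vertex values equal to averages of $m_{K'}(\mu_h)$ over the patch $\omega_a$ (or zero at boundary vertices), expanding $\mathcal I_h^0\mu_h|_K$ in barycentric coordinates reduces the problem to controlling differences $m_K(\mu_h)-m_{K'}(\mu_h)$ for neighboring elements. Exploiting that $\mu_h\in M_h$ is single-valued on shared faces causes the face in common to cancel in the defining sum \eqref{mean_mu}, and a Cauchy--Schwarz estimate on the remaining face-average terms yields
\begin{equation*}
|m_K(\mu_h)-m_{K'}(\mu_h)|\lesssim h^{-(d-1)/2}\bigl(\|\mu_h-m_K(\mu_h)\|_{\partial K}+\|\mu_h-m_{K'}(\mu_h)\|_{\partial K'}\bigr).
\end{equation*}
Squaring, multiplying by $h^d$ (to convert the pointwise bound into an $L^2(K)$ bound using $\|1\|_K\sim h^{d/2}$), summing over $K$, and exploiting quasi-uniformity together with the finite overlap of the patches $\omega_a$, reduces everything to the quantities already bounded by Lemma \ref{error_auxilliary}. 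Boundary vertices, where $\mathcal I_h^0\mu_h(a)=0$, are treated by the same face-average strategy applied to $\mu_h$ itself on the boundary faces, and again assembled via Lemma \ref{error_auxilliary}. Combining all estimates produces \eqref{L_2_error} and closes the argument.
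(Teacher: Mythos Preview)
The paper does not give its own proof of this lemma; it explicitly omits the argument and refers to Lemmas~3.4 and~3.6 of \cite{MR3508837}. Your outline is essentially the argument in that reference, so it matches the intended proof. The reduction of \eqref{H_1_error} to \eqref{L_2_error} via \Cref{nabla_ine} plus an inverse inequality, the use of \eqref{basic_err_est} to reduce \eqref{L_2_error} to $\|w_h-\mathcal I_h^0\mu_h\|_K+h^{1/2}\|\mu_h-\mathcal I_h^0\mu_h\|_{\partial K}$, the insertion of $m_K(\mu_h)$, and the patch-comparison of the averages $m_K(\mu_h)$ are all the standard steps. Two small refinements: the bound on $|m_K(\mu_h)-m_{K'}(\mu_h)|$ is most cleanly obtained by pivoting through the shared-face average $\bar\mu_e=|e|^{-1}\int_e\mu_h$ (single-valuedness is what makes this a common pivot) rather than by literal cancellation; and elements $K,K'\in\omega_a$ that do not share a face must be linked by a chain of face-neighbors inside $\omega_a$, whose length is bounded by quasi-uniformity.

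One genuine point requires care. Your treatment of boundary vertices works only if $\mu_h|_{\varepsilon_h^\partial}=0$, and in fact the lemma as stated is false without that hypothesis: take $(\bm v_h,w_h,\mu_h)=(\bm 0,1,1)$, which satisfies \eqref{first_error_eq} and makes the right-hand side of both estimates vanish, yet on any element with a boundary face the trace of $\mathcal I_h(1,1)$ is a face-bubble (not the constant~$1$), so $h^{-1}\|1-\mathcal I_h(1,1)\|_{\mathcal T_h}\gtrsim h^{-1/2}$. In the paper the lemma is only ever invoked with $\mu_h=\varepsilon_h^{\widehat y}$ or $\mu_h=\varepsilon_h^{\widehat z}$, both of which vanish on $\varepsilon_h^\partial$, so there is no problem in the applications; but you should add this hypothesis explicitly. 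With it, your boundary step goes through: for $K$ having a boundary face $e$ one has $\bar\mu_e=0$, hence
\[
|m_K(\mu_h)|=|m_K(\mu_h)-\bar\mu_e|\;\lesssim\; h^{-(d-1)/2}\|\mu_h-m_K(\mu_h)\|_{\partial K},
\]
and elements with a boundary vertex but no boundary face are handled by chaining to such a $K$ inside the vertex patch, after which \Cref{error_auxilliary} finishes the estimate.
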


\subsection{Proof of Main Result}
Now we move to the proof of the error estimates.  We follow the strategy of Part I \cite{HuMateosSinglerZhangZhang1} and split the proof into
seven steps.  In the first five steps we use the rewriting of operators $\mathscr B_1$ and $\mathscr B_2$ in an explicit way and the proofs are different from the corresponding ones of Part I.  Steps 6 and 7 use the properties of $\mathscr B_1$ and $\mathscr B_2$ recalled in \Cref{property_B} and \Cref{identical_equa} and are very similar to Steps 6 and 7 in the high regularity case in Part I.  We include these proofs here to make this paper self-contained.

We first bound the error between the solution of the mixed form \eqref{mixed_a}-\eqref{mixed_d} of the optimality system and the solution
$$
({\bm{q}}_h(u),{\bm{p}}_h(u), y_h(u), z_h(u), {\widehat{y}}_h^o(u), {\widehat{z}}_h^o(u))\in \bm{V}_h\times\bm{V}_h\times W_h \times W_h\times M_h(o)\times M_h(o)
$$
of the auxiliary problem
\begin{subequations}\label{HDG_inter_u}
	\begin{align}
	\mathscr B_1(\bm q_h(u), y_h(u), \widehat{y}^o_h(u); \bm r_1, w_1, \mu_1)&=( f, w_1)_{\mathcal T_h} - \langle P_M u, \bm r_1\cdot\bm n\rangle_{\varepsilon_h^\partial} \nonumber \\
	& \quad -\langle (\bm\beta\cdot\bm n-h^{-1}-\tau_1) P_M u, w_1\rangle_{\varepsilon_h^\partial},\label{HDG_u_a}\\
	\mathscr B_2(\bm p_h(u), z_h(u), \widehat{z}^o_h(u); \bm r_2,  w_2, \mu_2)&=(y_h(u) - y_d, w_2)_{\mathcal T_h},\label{HDG_u_b}
	\end{align}
\end{subequations}
for all $\left(\bm{r}_1, \bm{r}_2,w_1,w_2,\mu_1,\mu_2\right)\in \bm{V}_h\times\bm{V}_h \times W_h\times W_h\times M_h(o)\times M_h(o)$.  As in Part~I, we use the notation
\begin{align}\label{notation_1}
\begin{aligned}
\delta^{\bm q} &=\bm q-{\bm\Pi}\bm q,\\
\delta^y&=y- \Pi y,\\
\delta^{\widehat y} &= y-P_M y,\\
\widehat {\bm\delta}_1 &= \bm{\beta}\cdot\bm n \delta^{\widehat y} + (h^{-1}+\tau_1)(\delta^y - \delta^{\widehat y}),
\end{aligned}
&&
\begin{aligned}
\varepsilon^{\bm q}_h &= {\bm\Pi} \bm q-\bm q_h(u),\\
\varepsilon^{y}_h &= \Pi y-y_h(u),\\
\varepsilon^{\widehat y}_h &= P_M y-\widehat{y}_h(u),\\
\ & \
\end{aligned}
\end{align}
where $\widehat y_h(u) = \widehat y_h^o(u)$ on $\varepsilon_h^o$ and $\widehat y_h(u) = P_M u$ on $\varepsilon_h^{\partial}$.  This definition gives $\varepsilon_h^{\widehat y} = 0$ on $\varepsilon_h^{\partial}$.

\subsubsection{Step 1: The error equation for part 1 of the auxiliary problem \eqref{HDG_u_a}}
\label{subsec:proof_step1}

\begin{lemma}\label{lemma:step1_first_lemma}
	We have
	\begin{align}\label{error_equation_L2k1}
	\hspace{1em}&\hspace{-1em} \mathscr B_1 (\varepsilon_h^{\bm q},\varepsilon_h^{ y}, \varepsilon_h^{\widehat y}, \bm r_1, w_1, \mu_1)\nonumber\\
	&=-(\nabla\cdot\delta^{\bm q},w_1)_{\mathcal{T}_h}-\langle \bm \Pi \bm q\cdot\bm n,\mu_1 \rangle_{\mathcal{T}_h\backslash\varepsilon_h^\partial} +( \bm \beta \delta^y, \nabla w_1)_{{\mathcal{T}_h}}\nonumber\\
	& \quad +(\nabla\cdot\bm{\beta}\delta^y,w_1)_{\mathcal T_h} - \langle \widehat{\bm \delta}_1, w_1 \rangle_{\partial{{\mathcal{T}_h}}} + \langle \widehat{\bm \delta}_1, \mu_1 \rangle_{\partial{{\mathcal{T}_h}}\backslash \varepsilon_h^{\partial}}.
	\end{align}
\end{lemma}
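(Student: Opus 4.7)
The plan is to establish the error equation by computing $\mathscr B_1(\bm\Pi\bm q,\Pi y,P_M y;\bm r_1,w_1,\mu_1)$ using the mixed weak form and projection identities, then subtracting the auxiliary problem equation \eqref{HDG_u_a}. By bilinearity,
$$
\mathscr B_1(\varepsilon_h^{\bm q},\varepsilon_h^{y},\varepsilon_h^{\widehat y};\bm r_1,w_1,\mu_1) = \mathscr B_1(\bm\Pi\bm q,\Pi y,P_M y;\bm r_1,w_1,\mu_1) - \mathscr B_1(\bm q_h(u),y_h(u),\widehat y_h^o(u);\bm r_1,w_1,\mu_1),
$$
and the second term is given explicitly by \eqref{HDG_u_a}. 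So the task reduces to identifying the first term.

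First I would handle the $\bm r_1$ contribution. Using $(\bm\Pi\bm q,\bm r_1)_{\mathcal T_h}=(\bm q,\bm r_1)_{\mathcal T_h}$, $(\Pi y,\nabla\cdot\bm r_1)_{\mathcal T_h}=(y,\nabla\cdot\bm r_1)_{\mathcal T_h}$ (since $\nabla\cdot\bm r_1$ is piecewise polynomial of degree $\leq k+1$), and $\langle P_M y,\bm r_1\cdot\bm n\rangle_e=\langle y,\bm r_1\cdot\bm n\rangle_e$ for every face, elementwise integration by parts together with the continuous mixed equation \eqref{mixed_a} reduces the $\bm r_1$ block to $-\langle u,\bm r_1\cdot\bm n\rangle_{\varepsilon_h^\partial}$. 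Subtracting the $\bm r_1$ piece of \eqref{HDG_u_a} gives $-\langle u-P_M u,\bm r_1\cdot\bm n\rangle_{\varepsilon_h^\partial}$, which vanishes by the defining property of $P_M$ on each boundary face. This is why no $\bm r_1$ term appears on the right-hand side of \eqref{error_equation_L2k1}.

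Next I would deal with the $w_1$ contribution. Splitting $\nabla\cdot\bm\Pi\bm q = \nabla\cdot\bm q - \nabla\cdot\delta^{\bm q}$ and writing $\Pi y = y-\delta^y$, the volume terms of $\mathscr B_1$ plus equation \eqref{mixed_b} (after integrating $\nabla\cdot(\bm\beta y)$ by parts on each $K$) produce $(f,w_1)_{\mathcal T_h}-(\nabla\cdot\delta^{\bm q},w_1)_{\mathcal T_h}+(\bm\beta\delta^y,\nabla w_1)_{\mathcal T_h}+(\nabla\cdot\bm\beta\,\delta^y,w_1)_{\mathcal T_h}-\langle\bm\beta\cdot\bm n\,y,w_1\rangle_{\partial\mathcal T_h}$, plus the $h^{-1}+\tau_1$ boundary terms from the definition of $\mathscr B_1$. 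On interior faces these boundary contributions collapse to $-\langle\widehat{\bm\delta}_1,w_1\rangle_{\partial\mathcal T_h\backslash\varepsilon_h^\partial}$ using the algebraic identity $\bm\beta\cdot\bm n(P_M y-y)+(h^{-1}+\tau_1)(\Pi y-P_M y)=-\widehat{\bm\delta}_1$. On $\varepsilon_h^\partial$, after subtracting the boundary term in \eqref{HDG_u_a} involving $P_M u$ and using $y=u$, the same algebraic identity (with $\delta^{\widehat y}|_\Gamma = u-P_M u$) yields exactly $-\langle\widehat{\bm\delta}_1,w_1\rangle_{\varepsilon_h^\partial}$. Assembling gives the desired $-\langle\widehat{\bm\delta}_1,w_1\rangle_{\partial\mathcal T_h}$.

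Finally the $\mu_1$ block comes from the last line in the definition of $\mathscr B_1$, evaluated at $(\bm\Pi\bm q,\Pi y,P_M y)$. Writing $h^{-1}(\Pi y-P_M y)+\tau_1(\Pi y-P_M y)=-(h^{-1}+\tau_1)(\delta^y-\delta^{\widehat y})$ and $\bm\beta\cdot\bm n\,P_M y=\bm\beta\cdot\bm n\,y-\bm\beta\cdot\bm n\,\delta^{\widehat y}$, the bracket becomes $\bm\Pi\bm q\cdot\bm n+\bm\beta\cdot\bm n\,y-\widehat{\bm\delta}_1$, and the $\bm\beta\cdot\bm n\,y$ term cancels when summed over $\partial\mathcal T_h\backslash\varepsilon_h^\partial$ since $\mu_1\in M_h(o)$ is single-valued on interior faces and $y$ is single-valued (normals enter with opposite signs from the two adjacent elements). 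This produces $-\langle\bm\Pi\bm q\cdot\bm n,\mu_1\rangle_{\partial\mathcal T_h\backslash\varepsilon_h^\partial}+\langle\widehat{\bm\delta}_1,\mu_1\rangle_{\partial\mathcal T_h\backslash\varepsilon_h^\partial}$, matching the remaining two terms in \eqref{error_equation_L2k1}. The main delicate step is the bookkeeping on $\varepsilon_h^\partial$: identifying the right combination of $P_M u$, $u$, $\Pi y$ so that the boundary contribution in the $w_1$ block repackages into the single expression $-\langle\widehat{\bm\delta}_1,w_1\rangle_{\varepsilon_h^\partial}$; everything else is routine projection and integration-by-parts algebra.
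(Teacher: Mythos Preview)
Your proposal is correct and follows essentially the same approach as the paper: compute $\mathscr B_1(\bm\Pi\bm q,\Pi y,P_M y;\bm r_1,w_1,\mu_1)$, use the $L^2$ projection identities and the elementwise form of the exact equations for $(\bm q,y)$, then subtract the auxiliary equation \eqref{HDG_u_a}. The only difference is organizational---you treat the $\bm r_1$, $w_1$, and $\mu_1$ blocks separately, whereas the paper writes out the full expression for $\mathscr B_1$ and simplifies in one pass---but the algebraic content (including the cancellation $\langle\bm\beta\cdot\bm n\,y,\mu_1\rangle_{\partial\mathcal T_h\backslash\varepsilon_h^\partial}=0$ and the repackaging of the $\varepsilon_h^\partial$ terms via $y=u$ into $-\langle\widehat{\bm\delta}_1,w_1\rangle_{\varepsilon_h^\partial}$) is identical.
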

\begin{proof}
	Using the definition of $ \mathscr B_1 $ in \eqref{def_B1} gives
	\begin{align*}
	\hspace{1em}&\hspace{-1em}  \mathscr B_1 (\bm \Pi {\bm q},\Pi { y}, P_M  y, \bm r_1, w_1, \mu_1)\\
	&= (\bm \Pi {\bm q}, \bm{r_1})_{{\mathcal{T}_h}}- (\Pi { y}, \nabla\cdot \bm{r_1})_{{\mathcal{T}_h}}+\langle P_M  y, \bm{r_1}\cdot \bm{n} \rangle_{\partial{{\mathcal{T}_h}}\backslash {\varepsilon_h^{\partial}}}\\
	&  \quad+(\nabla\cdot\bm \Pi {\bm q},w_1)_{\mathcal{T}_h}- ( \bm{\beta} \Pi y, \nabla w_1)_{{\mathcal{T}_h}}
	- (\nabla\cdot\bm{\beta} \Pi y,  w_1)_{{\mathcal{T}_h}}\\
	&\quad+\langle (h^{-1} + \tau_1) \Pi { y}, w_1 \rangle_{\partial{{\mathcal{T}_h}}}
	+ (\bm{\beta}\cdot\bm n-h^{-1}-\tau_1) P_M  y, w_1 \rangle_{\partial{{\mathcal{T}_h}}\backslash \varepsilon_h^{\partial}}\\
	&\quad	-\langle  \bm \Pi \bm q\cdot\bm n+\bm \beta\cdot\bm n P_M y +(h^{-1}+\tau_1)(\Pi y - P_M y),\mu_1\rangle_{\partial\mathcal T_h\backslash\varepsilon^{\partial}_h}.
	\end{align*}
	Using properties of the $ L^2 $ projections \eqref{L2_projection} gives
	\begin{align*}
	\hspace{3em}&\hspace{-3em} \mathscr B_1 (\bm \Pi {\bm q},\Pi { y},P_M  y, \bm r_1, w_1, \mu_1) \\
	&= ( {\bm q}, \bm{r_1})_{{\mathcal{T}_h}}- ({ y}, \nabla\cdot \bm{r_1})_{{\mathcal{T}_h}}+\langle   y, \bm{r_1}\cdot \bm{n} \rangle_{\partial{{\mathcal{T}_h}}\backslash {\varepsilon_h^{\partial}}}\\
	& \quad +(\nabla\cdot\bm q,w_1)_{\mathcal{T}_h}-(\nabla\cdot \delta^{\bm q},w_1)_{\mathcal{T}_h}- ( \bm \beta y, \nabla w_1)_{{\mathcal{T}_h}} +  (  \bm \beta \delta^y, \nabla w_1)_{{\mathcal{T}_h}} \\
	&\quad - (\nabla\cdot\bm{\beta} y, w_1)_{\mathcal T_h} + (\nabla\cdot\bm{\beta} \delta^y, w_1)_{\mathcal T_h}+\langle (h^{-1}+\tau_1)\Pi y , w_1 \rangle_{\partial{{\mathcal{T}_h}}}  \\
	&\quad+\langle\bm\beta\cdot\bm n y, w_1\rangle_{\partial\mathcal T_h\backslash\varepsilon_h^\partial} - \langle\bm\beta\cdot\bm n \delta^{\widehat y}, w_1\rangle_{\partial\mathcal T_h\backslash\varepsilon_h^\partial}
	- \langle (h^{-1}+\tau_1) P_M  y, w_1 \rangle_{\partial{{\mathcal{T}_h}}\backslash \varepsilon_h^{\partial}} \\
	&\quad- \langle \bm \Pi {\bm q}\cdot\bm n, \mu_1 \rangle_{\partial{{\mathcal{T}_h}}\backslash\varepsilon_h^{\partial}} - \langle {\bm \beta}\cdot\bm n y, \mu_1 \rangle_{\partial{{\mathcal{T}_h}}\backslash\varepsilon_h^{\partial}}+\langle {\bm \beta}\cdot\bm n \delta^{\widehat y}, \mu_1 \rangle_{\partial{{\mathcal{T}_h}}\backslash\varepsilon_h^{\partial}}\\
	&\quad+\langle (h^{-1}+\tau_1) (\delta^y-\delta^{\widehat y}), \mu_1 \rangle_{\partial{{\mathcal{T}_h}}\backslash\varepsilon_h^{\partial}}.
	\end{align*}
	The exact state $ y $ and flux $\bm{q}$ satisfy
	\begin{align*}
	(\bm{q},\bm{r}_1)_{\mathcal{T}_h}-(y,\nabla\cdot \bm{r}_1)_{\mathcal{T}_h}+\left\langle{y},\bm r_1\cdot \bm n \right\rangle_{\partial {\mathcal{T}_h}\backslash\varepsilon_h^\partial} &= -\langle u,\bm r_1\cdot \bm n \rangle_{\varepsilon_h^\partial},\\
	(\nabla\cdot \bm{q},w_1)_{\mathcal{T}_h}-(\bm{\beta} y,\nabla w_1)_{\mathcal{T}_h}-(\nabla \cdot \bm{\beta} y, w_1)_{\mathcal{T}_h}\\
	+\left\langle \bm \beta\cdot \bm n y,w_1\right\rangle_{\partial {\mathcal{T}_h}\backslash \varepsilon_h^\partial} &= -\langle \bm \beta \cdot \bm n u,w_1 \rangle_{\varepsilon_h^\partial}+(f,w_1)_{\mathcal{T}_h},
	\end{align*}
	for all $(\bm{r}_1,w_1)\in\bm{V}_h\times W_h$. This gives
	\begin{align*}
	\hspace{1em}&\hspace{-1em}  \mathscr B_1 (\bm \Pi {\bm q},\Pi { y}, P_M  y, \bm r_1, w_1, \mu_1)\\
	&=-\left\langle u,\bm r_1\cdot \bm n \right\rangle_{\varepsilon_h^{\partial}} - \left\langle \bm{\beta}\cdot \bm n u,w_1\right\rangle_{\varepsilon_h^{\partial}} + (f,w_1)_{\mathcal T_h}-(\nabla\cdot \delta^{\bm q},w_1)_{\mathcal{T}_h} +  ( \bm \beta \delta^y, \nabla w_1)_{{\mathcal{T}_h}} \\
	&\quad+(\nabla\cdot\bm{\beta}\delta^y,w_1)_{\mathcal T_h} +\langle (h^{-1} +\tau_1) \Pi y, w_1 \rangle_{\partial{{\mathcal{T}_h}}} \\
	&\quad- \langle {\bm \beta}\cdot\bm n \delta^{\widehat y}, w_1 \rangle_{\partial{{\mathcal{T}_h}}\backslash\varepsilon_h^{\partial}}  - \langle (h^{-1}+\tau_1) P_M  y, w_1 \rangle_{\partial{{\mathcal{T}_h}}\backslash \varepsilon_h^{\partial}} - \langle \bm \Pi \bm q\cdot\bm n, \mu_1 \rangle_{\partial{{\mathcal{T}_h}}\backslash\varepsilon_h^{\partial}}\\
	&\quad+\langle {\bm \beta}\cdot\bm n \delta^{\widehat y}, \mu_1 \rangle_{\partial{{\mathcal{T}_h}}\backslash\varepsilon_h^{\partial}}+ \langle (h^{-1}+\tau_1) (\delta^y-\delta^{\widehat y}), \mu_1 \rangle_{\partial{{\mathcal{T}_h}}\backslash\varepsilon_h^{\partial}}.
	\end{align*}
	Here we used $ \langle {\bm \beta}\cdot\bm n y, \mu_1 \rangle_{\partial{{\mathcal{T}_h}}\backslash\varepsilon_h^{\partial}} =0$, which holds since $\mu_1$ is a single-valued function on the interior edges. Subtracting part 1 of the auxiliary problem \eqref{HDG_u_a} from the above equality gives the result:
	\begin{align*}
	\hspace{1em}&\hspace{-1em} \mathscr B_1 (\varepsilon_h^{\bm q},\varepsilon_h^{ y}, \varepsilon_h^{\widehat y},\bm r_1, w_1, \mu_1) \\
	& =  - (\nabla\cdot \delta^{\bm q},w_1)_{\mathcal{T}_h} +( \bm \beta \delta^y, \nabla w_1)_{{\mathcal{T}_h}}
	+(\nabla\cdot\bm{\beta}\delta^y,w_1)_{\mathcal T_h}  \\
	&  \quad+\langle (h^{-1}+\tau_1) \Pi y, w_1 \rangle_{\partial{{\mathcal{T}_h}}}- \langle {\bm \beta}\cdot\bm n \delta^{\widehat y}, w_1 \rangle_{\partial{{\mathcal{T}_h}}}  - \langle (h^{-1}+\tau_1) P_M  y, w_1 \rangle_{\partial{{\mathcal{T}_h}}} \\
	&\quad- \langle \bm \Pi \bm q\cdot\bm n, \mu_1 \rangle_{\partial{{\mathcal{T}_h}}\backslash\varepsilon_h^{\partial}}+\langle {\bm \beta}\cdot\bm n \delta^{\widehat y}, \mu_1 \rangle_{\partial{{\mathcal{T}_h}}\backslash\varepsilon_h^{\partial}}+ \langle (h^{-1}+\tau_1) (\delta^y-\delta^{\widehat y}), \mu_1 \rangle_{\partial{{\mathcal{T}_h}}\backslash\varepsilon_h^{\partial}}\\
	&= -(\nabla\cdot \delta^{\bm q},w_1)_{\mathcal{T}_h}  + ( \bm \beta \delta^y, \nabla w_1)_{{\mathcal{T}_h}}
	+(\nabla\cdot\bm{\beta}\delta^y,w_1)_{\mathcal T_h}\\
	&\quad -\langle \bm \Pi \bm q\cdot \bm n,\mu_1 \rangle_{\partial \mathcal{T}_h\backslash \varepsilon_h^\partial} - \langle \widehat{\bm \delta}_1, w_1 \rangle_{\partial{{\mathcal{T}_h}}} + \langle \widehat{\bm \delta}_1, \mu_1 \rangle_{\partial{{\mathcal{T}_h}}\backslash \varepsilon_h^{\partial}}.
	\end{align*}
\end{proof}

\subsubsection{Step 2: Estimate for $\varepsilon_h^{\boldmath q}$}
\begin{lemma}\label{lemma:step2_main_lemma}
	We have
	\begin{align}
	\|\varepsilon_h^{\bm{q}}\|_{\mathcal{T}_h}+h^{-\frac 1 2}\|{\varepsilon_h^y-\varepsilon_h^{\widehat{y}}}\|_{\partial \mathcal T_h}  \lesssim h^{s_{\bm q}}\norm{\bm q}_{s^{\bm q},\Omega} + h^{s_{y}-1}\norm{y}_{s^{y},\Omega}.
	\end{align}
\end{lemma}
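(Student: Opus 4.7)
The plan is to test the error equation of \Cref{lemma:step1_first_lemma} with $(\bm r_1, w_1, \mu_1) = (\varepsilon_h^{\bm q}, \varepsilon_h^y, \varepsilon_h^{\widehat y})$ and apply the coercivity identity in \Cref{property_B}. Using $\nabla\cdot\bm\beta \le 0$ (to drop the $-\tfrac12(\nabla\cdot\bm\beta\,\varepsilon_h^y,\varepsilon_h^y)$ term), the condition \eqref{eqn:tau1_condition} on the stabilization (so $h^{-1}+\tau_1-\tfrac12\bm\beta\cdot\bm n\gtrsim h^{-1}$), and the fact that $\varepsilon_h^{\widehat y}$ vanishes on $\varepsilon_h^\partial$ (so the ``$\varepsilon_h^\partial$'' boundary contribution joins the interior-face one), I get the lower bound
\begin{equation*}
\|\varepsilon_h^{\bm q}\|_{\mathcal T_h}^2 + h^{-1}\|\varepsilon_h^y-\varepsilon_h^{\widehat y}\|_{\partial\mathcal T_h}^2 \lesssim \mathscr B_1(\varepsilon_h^{\bm q},\varepsilon_h^y,\varepsilon_h^{\widehat y};\varepsilon_h^{\bm q},\varepsilon_h^y,\varepsilon_h^{\widehat y}),
\end{equation*}
so it remains to bound the six right-hand side terms of \eqref{error_equation_L2k1}.

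Four of the six terms are standard: $(\bm\beta\delta^y,\nabla\varepsilon_h^y)_{\mathcal T_h}$, $(\nabla\cdot\bm\beta\,\delta^y,\varepsilon_h^y)_{\mathcal T_h}$, and the two boundary terms in $\widehat{\bm\delta}_1$ are handled by Cauchy--Schwarz, the projection estimates \eqref{classical_ine}, the $L^2$-orthogonality $\langle\delta^{\widehat y},\mu\rangle_e=0$ for $\mu\in M_h$, and the discrete trace inequality. To control the $\nabla\varepsilon_h^y$ factor I note that setting $(w_1,\mu_1)=(0,0)$ in \Cref{lemma:step1_first_lemma} shows that $(\varepsilon_h^{\bm q},\varepsilon_h^y,\varepsilon_h^{\widehat y})$ satisfies \eqref{first_error_eq} (after extending $\varepsilon_h^{\widehat y}$ by zero on $\varepsilon_h^\partial$), so \Cref{nabla_ine} applies and $\|\nabla\varepsilon_h^y\|_{\mathcal T_h}$ is absorbed into the left side by Young's inequality.

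The real obstacle is the pair of terms $-(\nabla\cdot\delta^{\bm q},\varepsilon_h^y)_{\mathcal T_h}-\langle\bm\Pi\bm q\cdot\bm n,\varepsilon_h^{\widehat y}\rangle_{\partial\mathcal T_h\setminus\varepsilon_h^\partial}$, because the $L^2$ boundary trace of $\bm q$ (or of $\delta^{\bm q}$) is not available when $r_{\bm q}<1/2$. My plan is to combine them through the interpolant $\mathcal I := \mathcal I_h(\varepsilon_h^y,\varepsilon_h^{\widehat y})$, which lies in $H_0^1(\Omega)$ by \eqref{H01} since $\varepsilon_h^{\widehat y}=0$ on $\Gamma$. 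Using \eqref{interpolation_projection2} with test function $m=\bm\Pi\bm q\cdot\bm n|_e\in\mathcal P_k(e)$, together with the fact that $\mathcal I=0$ on $\Gamma$, I first replace $\varepsilon_h^{\widehat y}$ by $\mathcal I$ in the boundary integral. Elementwise integration by parts then converts $\langle\bm\Pi\bm q\cdot\bm n,\mathcal I\rangle_{\partial\mathcal T_h}$ into $(\nabla\cdot\bm\Pi\bm q,\mathcal I)_{\mathcal T_h}+(\bm\Pi\bm q,\nabla\mathcal I)_{\mathcal T_h}$, and since $\bm q\in H(\mathrm{div},\Omega)$ and $\mathcal I\in H_0^1(\Omega)$ the global identity $(\nabla\cdot\bm q,\mathcal I)_\Omega+(\bm q,\nabla\mathcal I)_\Omega=0$ holds. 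Rearranging, the troublesome pair collapses to
\begin{equation*}
-(\nabla\cdot\delta^{\bm q},\varepsilon_h^y-\mathcal I)_{\mathcal T_h}+(\delta^{\bm q},\nabla\mathcal I)_{\mathcal T_h},
\end{equation*}
in which the boundary trace of $\bm q$ has vanished. A final application of \eqref{interpolation_projection1}, using $\nabla\cdot\bm\Pi\bm q\in\mathcal P_{k-1}(K)$ elementwise, reduces the first piece to $(\nabla\cdot\bm q,\varepsilon_h^y-\mathcal I)_{\mathcal T_h}$ which, via $\nabla\cdot\bm q = f-\bm\beta\cdot\nabla y$, is bounded in terms of $\|y\|_{s_y}$.

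With these rewrites in hand, \Cref{error_esti_inter} bounds both $\|\nabla\mathcal I\|_{\mathcal T_h}$ and $h^{-1}\|\varepsilon_h^y-\mathcal I\|_{\mathcal T_h}$ by $\|\varepsilon_h^{\bm q}\|_{\mathcal T_h}+h^{-1/2}\|\varepsilon_h^y-\varepsilon_h^{\widehat y}\|_{\partial\mathcal T_h}$, which after Young's inequality gets absorbed into the left side. Combining with the projection estimates \eqref{classical_ine} for $\delta^{\bm q}$ and $\delta^y$ produces the claimed bound $h^{s_{\bm q}}\|\bm q\|_{s_{\bm q},\Omega}+h^{s_y-1}\|y\|_{s_y,\Omega}$. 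I expect the delicate step to be the algebraic bookkeeping in the third paragraph: identifying exactly the right grouping of the two low-regularity terms so that the normal trace of $\bm q$ is eliminated in favour of the volumetric quantity $\nabla\cdot\bm q\in L^2(\Omega)$ via the $H_0^1$ interpolant $\mathcal I$.
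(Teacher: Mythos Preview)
Your proof is correct and follows essentially the same route as the paper: test the error equation with $(\varepsilon_h^{\bm q},\varepsilon_h^y,\varepsilon_h^{\widehat y})$, invoke the coercivity identity for $\mathscr B_1$, and eliminate the boundary trace of $\bm q$ from the low-regularity pair $-(\nabla\cdot\delta^{\bm q},\varepsilon_h^y)_{\mathcal T_h}-\langle\bm\Pi\bm q\cdot\bm n,\varepsilon_h^{\widehat y}\rangle_{\partial\mathcal T_h}$ via the $H_0^1$ interpolant $\mathcal I_h(\varepsilon_h^y,\varepsilon_h^{\widehat y})$, arriving at exactly the decomposition $-(\nabla\cdot\bm q,\varepsilon_h^y-\mathcal I)_{\mathcal T_h}+(\delta^{\bm q},\nabla\mathcal I)_{\mathcal T_h}$ that the paper obtains.

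One small technical point: the paper does \emph{not} drop the term $-\tfrac12(\nabla\cdot\bm\beta\,\varepsilon_h^y,\varepsilon_h^y)_{\mathcal T_h}$ from the left side; it keeps it precisely to absorb the factor $\|(-\nabla\cdot\bm\beta)^{1/2}\varepsilon_h^y\|_{\mathcal T_h}$ that appears after applying Young's inequality to $(\nabla\cdot\bm\beta\,\delta^y,\varepsilon_h^y)_{\mathcal T_h}$. If you drop it as you propose, Cauchy--Schwarz on that term leaves you with an uncontrolled $\|\varepsilon_h^y\|_{\mathcal T_h}$, so you would need an additional step---either retain the term on the left as the paper does, or invoke the discrete Poincar\'e inequality (\Cref{lemma:discr_Poincare_ineq}, which applies since $\varepsilon_h^{\widehat y}=0$ on $\varepsilon_h^\partial$) together with \Cref{nabla_ine} to bound $\|\varepsilon_h^y\|_{\mathcal T_h}$ by the left-hand side.
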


\begin{proof}
	First, take $ (\bm v_h,w_h,\mu_h) = (\varepsilon_h^{\bm q},\varepsilon_h^{y}, \varepsilon_h^{\widehat y}) $ in the key inequality in \Cref{nabla_ine} to obtain
	\begin{align}\label{nabla_y}
	\|\nabla \varepsilon_h^y\|_{\mathcal T_h}  \lesssim  \|\varepsilon^{\bm q}_h\|_{\mathcal T_h}+h^{-\frac1 2}\|\varepsilon^y_h-\varepsilon^{\widehat y}_h\|_{\partial\mathcal T_h}.
	\end{align}
	Next, since $\varepsilon_h^{\widehat y}=0$ on $\varepsilon_h^\partial$, the energy identity for $ \mathscr B_1 $ in \Cref{property_B} gives
	\begin{align*}
	\hspace{1em}&\hspace{-1em} \mathscr B(\varepsilon_h^{\bm q},\varepsilon_h^{ y}, \varepsilon_h^{\widehat y}, \varepsilon_h^{\bm q},\varepsilon_h^{ y}, \varepsilon_h^{\widehat y})\\ &=(\varepsilon_h^{\bm{q}},\varepsilon_h^{\bm{q}})_{\mathcal{T}_h}+ \|(h^{-1}+\tau_1-\frac 12 \bm{\beta} \cdot\bm n)^{\frac 12 } (\varepsilon_h^y-\varepsilon_h^{\widehat{y}})\|_{\partial\mathcal T_h}^2 +\frac 1 2\| (-\nabla\cdot\bm{\beta})^{\frac  1 2}\varepsilon_h^y\|_{\mathcal T_h}^2.
	\end{align*}
	Take $(\bm r_1, w_1,\mu_1) = (\bm \varepsilon_h^{\bm q},\varepsilon_h^y,\varepsilon_h^{\widehat y})$ in the error equation \eqref{error_equation_L2k1} in \Cref{lemma:step1_first_lemma} to obtain
	\begin{equation}\label{step2_2}
	\begin{split}
	(\varepsilon_h^{\bm{q}},\varepsilon_h^{\bm{q}})_{\mathcal{T}_h}&+ \|(h^{-1}+\tau_1-\frac 12 \bm{\beta} \cdot\bm n)^{\frac 12 } (\varepsilon_h^y-\varepsilon_h^{\widehat{y}})\|_{\partial\mathcal T_h}^2 +\frac 1 2\| (-\nabla\cdot\bm{\beta})^{\frac  1 2}\varepsilon_h^y\|_{\mathcal T_h}^2\\
	&= -(\nabla\cdot \delta^{\bm q},\varepsilon_h^y)_{\mathcal{T}_h}-\langle \bm \Pi  \bm q\cdot\bm n,\varepsilon_h^{\widehat{y}} \rangle_{\partial\mathcal{T}_h}\\
	&\quad + ( \bm \beta \delta^y, \nabla \varepsilon_h^y)_{{\mathcal{T}_h}}
	+(\nabla\cdot\bm{\beta}\delta^y,\varepsilon_h^y)_{\mathcal T_h} -\langle \widehat {\bm\delta}_1,\varepsilon_h^y - \varepsilon_h^{\widehat y}\rangle_{\partial\mathcal T_h} \\
	&=: T_1 + T_2 + T_3 +T_4.
	\end{split}
	\end{equation}
	
	We rewrite the term $T_1$ using the interpolation operator $ \mathcal I_h $:
	\begin{align*}
	T_1& = -(\nabla\cdot \delta^{\bm q},\varepsilon_h^y)_{\mathcal{T}_h}-\langle \bm \Pi  \bm q\cdot\bm n,\varepsilon_h^{\widehat{y}} \rangle_{\partial\mathcal{T}_h}\\
	& =-(\nabla\cdot {\bm q},\varepsilon_h^y)_{\mathcal{T}_h} + (\nabla\cdot {\bm \Pi \bm q},\varepsilon_h^y)_{\mathcal{T}_h}-\langle \bm \Pi  \bm q\cdot\bm n,\varepsilon_h^{\widehat{y}} \rangle_{\partial\mathcal{T}_h}\\
	& = -(\nabla\cdot {\bm q},\varepsilon_h^y -  \mathcal I_h(\varepsilon_h^y, \varepsilon_h^{\widehat{y}}))_{\mathcal{T}_h} -(\nabla\cdot {\bm q}, \mathcal I_h(\varepsilon_h^y, \varepsilon_h^{\widehat{y}}))_{\mathcal{T}_h}\\
	&\quad+(\nabla \cdot \bm \Pi \bm q,\varepsilon_h^y)-\langle \bm \Pi  \bm q\cdot\bm n,\varepsilon_h^{\widehat{y}} \rangle_{\partial\mathcal{T}_h}\\
	& = -(\nabla\cdot\bm q,\varepsilon_h^y -  \mathcal I_h (\varepsilon_h^y, \varepsilon_h^{\widehat{y}}))_{\mathcal{T}_h} + ( \bm q,  \nabla \mathcal I_h(\varepsilon_h^y, \varepsilon_h^{\widehat{y}}))_{\mathcal{T}_h} \\
	&\quad+(\nabla \cdot \bm \Pi \bm q,\varepsilon_h^y)-\langle \bm \Pi  \bm q\cdot\bm n,\varepsilon_h^{\widehat{y}} \rangle_{\partial\mathcal{T}_h}\\
	& = -(\nabla\cdot\bm q,\varepsilon_h^y -  \mathcal I_h (\varepsilon_h^y, \varepsilon_h^{\widehat{y}}))_{\mathcal{T}_h} + ( \delta^{\bm q},  \nabla \mathcal I_h(\varepsilon_h^y, \varepsilon_h^{\widehat{y}}))_{\mathcal{T}_h} \\
	&\quad+( \bm \Pi {\bm q},  \nabla \mathcal I_h(\varepsilon_h^y, \varepsilon_h^{\widehat{y}}))_{\mathcal{T}_h}  + (\nabla \cdot \bm \Pi \bm q,\varepsilon_h^y)-\langle \bm \Pi  \bm q\cdot\bm n,\varepsilon_h^{\widehat{y}} \rangle_{\partial\mathcal{T}_h}\\
	& = -(\nabla\cdot\bm q,\varepsilon_h^y -  \mathcal I_h (\varepsilon_h^y, \varepsilon_h^{\widehat{y}}))_{\mathcal{T}_h} + ( \delta^{\bm q},  \nabla \mathcal I_h(\varepsilon_h^y, \varepsilon_h^{\widehat{y}}))_{\mathcal{T}_h}.
	\end{align*}
	The last step holds since
	\begin{align*}
	( \bm \Pi {\bm q},  \nabla \mathcal I_h(\varepsilon_h^y, \varepsilon_h^{\widehat{y}}))_{\mathcal{T}_h}  &= \langle \bm \Pi  \bm q\cdot\bm n,\mathcal I_h(\varepsilon_h^y, \varepsilon_h^{\widehat{y}}) \rangle_{\partial\mathcal{T}_h} - 	( \nabla \cdot \bm \Pi {\bm q}, \mathcal I_h(\varepsilon_h^y, \varepsilon_h^{\widehat{y}}))_{\mathcal{T}_h} \\
	&= \langle \bm \Pi  \bm q\cdot\bm n, \varepsilon_h^{\widehat{y}} \rangle_{\partial\mathcal{T}_h} - 	( \nabla \cdot \bm \Pi {\bm q}, \varepsilon_h^y)_{\mathcal{T}_h}.
	\end{align*}
	This implies
	\begin{align*}
	T_1&\le \|\nabla \cdot \bm q\|_{\mathcal T_h} \|\varepsilon_h^y -  \mathcal I_h (\varepsilon_h^y, \varepsilon_h^{\widehat{y}})\|_{\mathcal T_h} + \|\delta^{\bm q}\|_{\mathcal T_h} \|\nabla\mathcal I_h (\varepsilon_h^y, \varepsilon_h^{\widehat{y}})\|_{\mathcal T_h}\\
	&\lesssim  h (\|\varepsilon_h^{\bm q}\|_{\mathcal T_h} + h^{-\frac 1 2}\| \varepsilon_h^y - \varepsilon_h^{\widehat y}\|_{\partial \mathcal T_h}) + h^{s_{\bm q}}  \|\bm q\|_{s^{\bm q},\Omega} (\|\varepsilon_h^{\bm q}\|_{\mathcal T_h} + h^{-\frac 1 2}\| \varepsilon_h^y - \varepsilon_h^{\widehat y}\|_{\partial \mathcal T_h})\\
	&\lesssim h^{s_{\bm q}}  \|\bm q\|_{s^{\bm q},\Omega} (\|\varepsilon_h^{\bm q}\|_{\mathcal T_h} + h^{-\frac 1 2}\| \varepsilon_h^y - \varepsilon_h^{\widehat y}\|_{\partial \mathcal T_h}).
	\end{align*}
	Note that we used $ s_{\bm q} \in [0,1] $.
	
	For the terms $T_2$, $T_3$, and $T_4 $, apply \eqref{nabla_y} and Young's inequality to obtain
	\begin{align*}
	T_2 &=  ( \bm \beta \delta^y, \nabla \varepsilon_h^y)_{{\mathcal{T}_h}} \le C \| \delta^y\|_{\mathcal T_h}^2 + \frac 1 4
	\|\varepsilon_h^{\bm{q}}\|_{\mathcal T_h}^2 + \frac 1 {4h} \|{\varepsilon_h^y-\varepsilon_h^{\widehat{y}}}\|_{\partial \mathcal T_h}^2,\\
	T_3 &= (\nabla\cdot\bm{\beta}\delta^y,\varepsilon_h^y)_{\mathcal T_h} \le C \|\delta^y\|_{\mathcal T_h}^2 + \frac 1 2\|(-\nabla\cdot\bm{\beta})^{\frac 1 2} \varepsilon_h^y\|_{\mathcal T_h}^2,\\
	T_4 &= - \langle \widehat {\bm\delta}_1,\varepsilon_h^y - \varepsilon_h^{\widehat y}\rangle_{\partial\mathcal T_h} \le 4h \|\widehat {\bm\delta}_1\|_{\partial\mathcal T_h}^2 + \frac 1 {4h} \|{\varepsilon_h^y-\varepsilon_h^{\widehat{y}}}\|_{\partial \mathcal T_h}^2.
	\end{align*}
	Summing the estimates for $\{T_i\}_{i=1}^4$ gives the result.
\end{proof}
\begin{remark}
	In Part I \cite{HuMateosSinglerZhangZhang1}, we defined $\widehat {\bm\delta}_1 = \delta^{\bm q} \cdot \bm n +  \bm{\beta}\cdot\bm n \delta^{\widehat y} + (h^{-1}+\tau_1)(\delta^y - \delta^{\widehat y})$.  It is not meaningful to estimate $ \|\widehat {\bm\delta}_1\|_{\partial\mathcal T_h}$ if we only assume $r_{\bm q}\geq 0$.  In this paper, we have $ \widehat {\bm\delta}_1 = \bm{\beta}\cdot\bm n \delta^{\widehat y} + (h^{-1}+\tau_1)(\delta^y - \delta^{\widehat y}) $, and we can estimate $ \|\widehat {\bm\delta}_1\|_{\partial\mathcal T_h}$.
\end{remark}

\subsubsection{Step 3: Estimate for $\varepsilon_h^{y}$ by a duality argument}
\label{subsec:proof_step3}

Next, for any $\Theta$ in $L^2(\Omega)$ we consider the dual problem 
\begin{equation}\label{Dual_PDE}
\begin{split}
\bm\Phi-\nabla\Psi &= 0\qquad \ \ ~\text{in}\ \  \Omega,\\
\nabla\cdot\bm{\Phi}+\nabla\cdot(\bm\beta\Psi) &= \Theta \qquad \ \text{in}\  \ \Omega,\\
\Psi &= 0\qquad \ \ ~\text{on}\ \partial\Omega.
\end{split}
\end{equation}
Since the domain $\Omega$ is convex, we have the regularity estimate
\begin{align}
\norm{\bm \Phi}_{1,\Omega} + \norm{\Psi}_{2,\Omega} \le C_{\text{reg}} \norm{\Theta}_\Omega.
\end{align}

We use the following notation in the next proof for the estimate of $\varepsilon_h^y$:
\begin{align}
\delta^{\bm \Phi} &=\bm \Phi-{\bm\Pi} \bm \Phi, \quad \delta^\Psi=\Psi- {\Pi} \Psi, \quad
\delta^{\widehat \Psi} = \Psi-P_M\Psi.
\end{align}
\begin{lemma}\label{e_sec}
	We have
	\begin{align*}
	\|\varepsilon_h^y\|_{\mathcal T_h} \lesssim  h^{s_{\bm q}+1}\norm{\bm q}_{s^{\bm q},\Omega} + h^{s_{y}}\norm{y}_{s^{y},\Omega}.
	\end{align*}
\end{lemma}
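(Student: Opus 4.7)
The strategy is an Aubin--Nitsche duality argument in the HDG framework, designed to avoid any $L^2$ boundary trace of $\bm q$. My plan is to take $\Theta = \varepsilon_h^y$ in the dual problem \eqref{Dual_PDE}, which yields $(\bm\Phi, \Psi)$ with $\|\bm\Phi\|_{1,\Omega} + \|\Psi\|_{2,\Omega} \lesssim \|\varepsilon_h^y\|_{\mathcal T_h}$ by convexity of $\Omega$. I would then derive an analog of \Cref{lemma:step1_first_lemma} on the dual side: using the definition \eqref{def_B2} of $\mathscr B_2$, the projection identities \eqref{L2_projection}, and the strong form of \eqref{Dual_PDE}, one gets an identity of the form
\begin{equation*}
\mathscr B_2(\bm\Pi\bm\Phi, \Pi\Psi, P_M\Psi; \bm r_2, w_2, \mu_2) = (\varepsilon_h^y, w_2)_{\mathcal T_h} + R(\bm r_2, w_2, \mu_2),
\end{equation*}
where $R$ collects consistency terms built from $\delta^{\bm\Phi}$, $\delta^\Psi$, $\delta^{\widehat\Psi}$ and the interior jump $\langle\bm\Pi\bm\Phi\cdot\bm n,\cdot\rangle$. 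Testing with $(\bm r_2, w_2, \mu_2) = (-\varepsilon_h^{\bm q}, \varepsilon_h^y, \varepsilon_h^{\widehat y})$ produces $\|\varepsilon_h^y\|_{\mathcal T_h}^2$ on the right, and \Cref{identical_equa} lets me pivot the left side to $-\mathscr B_1(\varepsilon_h^{\bm q}, \varepsilon_h^y, \varepsilon_h^{\widehat y}; \bm\Pi\bm\Phi, -\Pi\Psi, -P_M\Psi)$; substituting the primal error identity \eqref{error_equation_L2k1} from Step~1 with test functions $(\bm\Pi\bm\Phi, -\Pi\Psi, -P_M\Psi)$ yields an explicit formula for $\|\varepsilon_h^y\|_{\mathcal T_h}^2$ as a sum of projection-error pairings.

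The hard part will be the pairing $-(\nabla\cdot\delta^{\bm q}, \Pi\Psi)_{\mathcal T_h} - \langle\bm\Pi\bm q\cdot\bm n, P_M\Psi\rangle_{\partial\mathcal T_h\setminus\varepsilon_h^\partial}$ (together with the symmetric dual contribution containing $\delta^{\bm\Phi}\cdot\bm n$ on element boundaries), since $r_{\bm q}\ge 0$ provides no $L^2$ boundary trace for $\bm q$. I would reuse the device from the proof of \Cref{lemma:step2_main_lemma}: insert $\mathcal I_h(\Pi\Psi, P_M\Psi)$, which lies in $H^1_0(\Omega)$ by \eqref{H01} because $P_M\Psi|_\Gamma = 0$, and integrate by parts globally to rewrite the offending pairing as
\begin{equation*}
(\delta^{\bm q}, \nabla\mathcal I_h(\Pi\Psi, P_M\Psi))_{\mathcal T_h} \;-\; (\nabla\cdot\bm q,\, \Pi\Psi - \mathcal I_h(\Pi\Psi, P_M\Psi))_{\mathcal T_h}.
\end{equation*}
Since the triple $(\bm\Pi\bm\Phi, \Pi\Psi, P_M\Psi)$ satisfies the hypothesis \eqref{first_error_eq} as a direct consequence of $\bm\Phi = \nabla\Psi$, \Cref{error_esti_inter} bounds $\|\nabla\mathcal I_h(\Pi\Psi, P_M\Psi)\|_{\mathcal T_h}$ and $h^{-1}\|\Pi\Psi - \mathcal I_h(\Pi\Psi, P_M\Psi)\|_{\mathcal T_h}$ by $\|\bm\Pi\bm\Phi\|_{\mathcal T_h} + h^{-1/2}\|\Pi\Psi - P_M\Psi\|_{\partial\mathcal T_h} \lesssim \|\bm\Phi\|_{1,\Omega} + \|\Psi\|_{2,\Omega}$.

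All remaining terms I would estimate by Cauchy--Schwarz together with the projection bounds \eqref{classical_ine} applied on both sides: on the primal side with rates $h^{s_{\bm q}}\|\bm q\|_{s_{\bm q},\Omega}$ and $h^{s_y-1/2}\|y\|_{s_y,\Omega}$, and on the dual side with rates $h\|\bm\Phi\|_{1,\Omega}$ and $h^{3/2}\|\Psi\|_{2,\Omega}$. The bound on $\|\varepsilon_h^{\bm q}\|_{\mathcal T_h} + h^{-1/2}\|\varepsilon_h^y - \varepsilon_h^{\widehat y}\|_{\partial\mathcal T_h}$ from \Cref{lemma:step2_main_lemma} absorbs the factors coming from $\widehat{\bm\delta}_1$, and the full elliptic regularity $\|\bm\Phi\|_{1,\Omega} + \|\Psi\|_{2,\Omega} \lesssim \|\varepsilon_h^y\|_{\mathcal T_h}$ provides exactly one extra power of $h$ compared with Step~2, upgrading the exponents $(h^{s_{\bm q}}, h^{s_y-1})$ into $(h^{s_{\bm q}+1}, h^{s_y})$. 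Dividing through by $\|\varepsilon_h^y\|_{\mathcal T_h}$ then delivers the claimed estimate.
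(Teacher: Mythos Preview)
Your overall duality framework is fine, and the pivot via \Cref{identical_equa} is equivalent to the paper's direct expansion of $\mathscr B_1(\varepsilon_h^{\bm q},\varepsilon_h^y,\varepsilon_h^{\widehat y};\bm\Pi\bm\Phi,\Pi\Psi,P_M\Psi)$. The gap is in your treatment of the critical term
\[
-(\nabla\cdot\delta^{\bm q},\Pi\Psi)_{\mathcal T_h}-\langle\bm\Pi\bm q\cdot\bm n,P_M\Psi\rangle_{\partial\mathcal T_h\setminus\varepsilon_h^\partial}.
\]
You propose inserting $\mathcal I_h(\Pi\Psi,P_M\Psi)$ and invoking \Cref{error_esti_inter}. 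That rewriting is valid, but the resulting term $(\delta^{\bm q},\nabla\mathcal I_h(\Pi\Psi,P_M\Psi))_{\mathcal T_h}$ is only bounded by $\|\delta^{\bm q}\|_{\mathcal T_h}\,\|\nabla\mathcal I_h(\Pi\Psi,P_M\Psi)\|_{\mathcal T_h}$, and \Cref{error_esti_inter} gives $\|\nabla\mathcal I_h(\Pi\Psi,P_M\Psi)\|_{\mathcal T_h}\lesssim\|\bm\Pi\bm\Phi\|_{\mathcal T_h}+h^{-1/2}\|\Pi\Psi-P_M\Psi\|_{\partial\mathcal T_h}\lesssim\|\varepsilon_h^y\|_{\mathcal T_h}$ with \emph{no} extra power of $h$. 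You therefore obtain only $h^{s_{\bm q}}\|\bm q\|_{s_{\bm q},\Omega}\|\varepsilon_h^y\|_{\mathcal T_h}$, one full order short of the required $h^{s_{\bm q}+1}$. The dual regularity does not rescue this: it enters through $\|\bm\Phi\|_{0,\Omega}\lesssim\|\varepsilon_h^y\|_{\mathcal T_h}$, not through any smallness of $\|\nabla\mathcal I_h\|$.

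The paper extracts the missing factor differently. For the $\bm q$ term it does \emph{not} use $\mathcal I_h$ at all; instead it integrates by parts globally using $\bm q\in H(\mathrm{div},\Omega)$ and $\Psi\in H_0^1(\Omega)$, then exploits $(\delta^{\bm q},\nabla\Pi\Psi)_{\mathcal T_h}=0$ to arrive at $(\nabla\cdot\bm q,\delta^\Psi)_{\mathcal T_h}+(\delta^{\bm q},\nabla\delta^\Psi)_{\mathcal T_h}$, where $\|\delta^\Psi\|_{\mathcal T_h}\lesssim h^2\|\Psi\|_{2,\Omega}$ and $\|\nabla\delta^\Psi\|_{\mathcal T_h}\lesssim h\|\Psi\|_{2,\Omega}$ supply the extra $h$. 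The $\mathcal I_h$ device is reserved for the \emph{other} dangerous pairing, namely $(\varepsilon_h^y,\nabla\cdot\delta^{\bm\Phi})_{\mathcal T_h}+\langle\varepsilon_h^{\widehat y},\bm\Pi\bm\Phi\cdot\bm n\rangle_{\partial\mathcal T_h}$, where it is applied to $(\varepsilon_h^y,\varepsilon_h^{\widehat y})$; there the primal factor $\|\varepsilon_h^{\bm q}\|_{\mathcal T_h}+h^{-1/2}\|\varepsilon_h^y-\varepsilon_h^{\widehat y}\|_{\partial\mathcal T_h}$ already carries $h^{s_{\bm q}}+h^{s_y-1}$ from \Cref{lemma:step2_main_lemma}, and the additional $h$ comes from $\|\delta^{\bm\Phi}\|_{\mathcal T_h}\lesssim h\|\bm\Phi\|_{1,\Omega}$ or from \eqref{L_2_error}. (A minor point: the triple satisfying \eqref{first_error_eq} is $(-\bm\Pi\bm\Phi,\Pi\Psi,P_M\Psi)$, not $(\bm\Pi\bm\Phi,\Pi\Psi,P_M\Psi)$, but this sign is harmless for norm bounds.)
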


\begin{proof}
	We take $\Theta =- \varepsilon_h^y$ in the dual problem \eqref{Dual_PDE} and $(\bm r_1,w_1,\mu_1) = ( {\bm\Pi}\bm{\Phi},{\Pi}\Psi,\\P_M\Psi)$ in the error equation \eqref{error_equation_L2k1} in \Cref{lemma:step1_first_lemma}.  Since $\Psi = 0$ on $\varepsilon_h^{\partial}$, we have
	\begin{align*}
	\hspace{1em}&\hspace{-1em}  \mathscr B_1 (\varepsilon^{\bm q}_h,\varepsilon^y_h,\varepsilon^{\widehat y}_h;{\bm\Pi}\bm{\Phi},{\Pi}\Psi,P_M\Psi)\\
	&= (\varepsilon^{\bm q}_h,{\bm\Pi}\bm{\Phi})_{\mathcal T_h}-( \varepsilon^y_h,\nabla\cdot{\bm\Pi}\bm{\Phi})_{\mathcal T_h}+\langle  \varepsilon^{\widehat y}_h,{\bm\Pi}\bm{\Phi}\cdot\bm n\rangle_{\partial\mathcal T_h\backslash \varepsilon_h^\partial}\\
	&\quad +(\nabla\cdot\varepsilon_h^{\bm q},\Pi \Psi)_{\mathcal{T}_h} -(\bm \beta\varepsilon^y_h,  \nabla {\Pi}\Psi)_{\mathcal T_h}-(\nabla\cdot\bm\beta \varepsilon^y_h,{\Pi}\Psi)_{\mathcal T_h}+\langle (h^{-1}+\tau_1) \varepsilon^y_h ,{\Pi}\Psi\rangle_{\partial\mathcal T_h}\\
	&\quad+\langle (\bm\beta\cdot\bm n -h^{-1}-\tau_1) \varepsilon^{\widehat y}_h,{\Pi}\Psi\rangle_{\partial\mathcal T_h}\\
	&\quad-\langle  \varepsilon^{\bm q}_h\cdot\bm n+\bm \beta\cdot\bm n\varepsilon^{\widehat y}_h  + (h^{-1}+\tau_1)(\varepsilon^y_h - \varepsilon^{\widehat y}_h),P_M\Psi\rangle_{\partial\mathcal T_h}\\
	&= (\varepsilon^{\bm q}_h,\bm{\Phi})_{\mathcal {T}_h}-( \varepsilon^y_h,\nabla\cdot \bm{\Phi})_{\mathcal T_h} + ( \varepsilon^y_h,\nabla\cdot\delta^{\bm{\Phi}})_{\mathcal T_h}+\langle  \varepsilon^{\widehat y}_h,\bm \Pi \bm \Phi \cdot\bm n\rangle_{\partial\mathcal T_h}-(\varepsilon^{\bm q}_h , \nabla \Psi)_{\mathcal T_h}\\
	&\quad+\langle \varepsilon_h^{\bm q}\cdot \bm n, \Psi \rangle_{\partial \mathcal{T}_h}-(\bm \beta \varepsilon_h^y,\nabla \Psi)_{\mathcal{T}_h}+(\bm \beta \varepsilon_h^y,\nabla \delta^{\Psi})_{\mathcal{T}_h}-(\nabla \cdot \bm \beta \varepsilon_h^y,\Psi)_{\mathcal{T}_h}\\
	& \quad +(\nabla \cdot \bm \beta \varepsilon_h^y,\delta^{\Psi})_{\mathcal{T}_h}-\langle \varepsilon_h^{\bm q}\cdot \bm n,P_M \Psi \rangle_{\partial \mathcal{T}_h}-\langle \bm \beta\cdot\bm n\varepsilon^{\widehat y}_h ,\delta^{\Psi} \rangle_{\partial \mathcal{T}_h} \\
	&\quad- \langle  (h^{-1}+ \tau_1)(\varepsilon^y_h - \varepsilon^{\widehat y}_h),\delta^\Psi - \delta^{\widehat\Psi}\rangle_{\partial\mathcal T_h}.
	\end{align*}
	Here we used $\langle \bm \beta\cdot \bm n \varepsilon^{\widehat y}_h, \Psi\rangle_{\partial\mathcal T_h}=0$ and $\langle \bm \beta\cdot \bm n \varepsilon^{\widehat y}_h, P_M\Psi\rangle_{\partial\mathcal T_h}=0$, which both hold since $\varepsilon^{\widehat y}_h$ is a single-valued function on interior edges and $\varepsilon^{\widehat y}_h=0$ on $\varepsilon^{\partial}_h$.
	
	By the same argument as in the proof of	\Cref{lemma:step2_main_lemma} for the term $T_1$, we have 
	\begin{align*}
	( \varepsilon^y_h,\nabla\cdot\delta^{\bm{\Phi}})_{\mathcal T_h}&+\langle  \varepsilon^{\widehat y}_h,\bm \Pi \bm \Phi\cdot\bm n\rangle_{\partial\mathcal T_h}\\
	&=(\varepsilon_h^y-\mathcal I_h(\varepsilon_h^y,\varepsilon_h^{\widehat{y}}),\nabla\cdot  \bm\Phi)_{\mathcal{T}_h}-(\nabla\mathcal I_h(\varepsilon_h^y,\varepsilon_h^{\widehat{y}}),\delta^{\bm\Phi})_{\mathcal{T}_h}.
	\end{align*}
	Next, integration by parts gives
	\begin{align*}
	(\bm\beta \varepsilon_h^y, \nabla \delta^{ \Psi})_{\mathcal{T}_h}&=\langle \bm{\beta} \cdot\bm n \varepsilon_h^y, \delta^{ \Psi}\rangle_{\partial\mathcal T_h}- (\nabla\cdot \bm{\beta} \varepsilon_h^y, \delta^{ \Psi})_{\mathcal T_h} - (\bm{\beta}\cdot \nabla\varepsilon_h^y, \delta^{ \Psi})_{\mathcal T_h}.
	\end{align*}
	This implies
	\begin{align*}
	\hspace{1em}&\hspace{-1em}  \mathscr B_1 (\varepsilon^{\bm q}_h,\varepsilon^y_h,\varepsilon^{\widehat y}_h;{\bm\Pi}\bm{\Phi},{\Pi}\Psi,P_M\Psi)\\
	&=\|  \varepsilon_h^y\|_{\mathcal T_h}^2 + \langle \bm \beta\cdot\bm n(\varepsilon^y_h - \varepsilon^{\widehat y}_h),\delta^{\Psi} \rangle_{\partial\mathcal T_h} - (\nabla \varepsilon_h^y,\bm{\beta}\delta^{\Psi})_{\mathcal T_h}\\
	&\quad+(\varepsilon_h^y-\mathcal I_h(\varepsilon_h^y,\varepsilon_h^{\widehat{y}}),\nabla\cdot\delta^{\bm \Phi})_{\mathcal{T}_h}-(\nabla\mathcal I_h(\varepsilon_h^y,\varepsilon_h^{\widehat{y}}),\delta^{\bm\Phi})_{\mathcal{T}_h}\\
	& \quad- \langle   h^{-1}(\varepsilon^y_h-\varepsilon^{\widehat y}_h) + \tau_1(\varepsilon^y_h-\varepsilon^{\widehat y}_h),\delta^\Psi - \delta^{\widehat\Psi}\rangle_{\partial\mathcal T_h}.
	\end{align*}
	Also, since $\Psi = 0$ on $\varepsilon_h^\partial $, the error equation \eqref{error_equation_L2k1} in \Cref{lemma:step1_first_lemma} gives
	\begin{align*}
	\hspace{2em}&\hspace{-2em}  \mathscr B_1 (\varepsilon^{\bm q}_h,\varepsilon^y_h,\varepsilon^{\widehat y}_h;{\bm\Pi}\bm{\Phi},{\Pi}\Psi,P_M\Psi)\\
	&=-(\nabla\cdot \delta^{\bm q},{\Pi}\Psi)_{\mathcal{T}_h}-\langle \bm \Pi \bm q\cdot\bm n,P_M \Psi \rangle_{\mathcal{T}_h}\nonumber\\
	& \quad  +( \bm \beta \delta^y, \nabla {\Pi}\Psi)_{{\mathcal{T}_h}}
	+(\nabla\cdot\bm{\beta}\delta^y,{\Pi}\Psi)_{\mathcal T_h} - \langle \widehat{\bm \delta}_1, {\Pi}\Psi-P_M \Psi \rangle_{\partial{{\mathcal{T}_h}}}\\
	&=-(\nabla\cdot {\bm q},\Pi {\Psi})_{\mathcal T_h} +(\nabla\cdot \bm \Pi \bm q, {\Psi})_{\mathcal T_h} -\langle \bm \Pi \bm q\cdot\bm n, \Psi \rangle_{\mathcal{T}_h} \\
	&\quad+( \bm \beta \delta^y, \nabla {\Pi}\Psi)_{{\mathcal{T}_h}}+(\nabla\cdot\bm{\beta}\delta^y,{\Pi}\Psi)_{\mathcal T_h}- \langle \widehat{\bm \delta}_1, {\Pi}\Psi-P_M \Psi \rangle_{\partial{{\mathcal{T}_h}}},\\
	&=(\nabla\cdot {\bm q},\delta^{\Psi})_{\mathcal T_h} - (\nabla\cdot {\bm q},\Psi)_{\mathcal T_h} + (\nabla\cdot {\bm \Pi \bm q},\Psi)_{\mathcal T_h} -\langle \bm \Pi \bm q\cdot\bm n,\Psi \rangle_{\mathcal{T}_h} \\
	&\quad+( \bm \beta \delta^y, \nabla {\Pi}\Psi)_{{\mathcal{T}_h}}+(\nabla\cdot\bm{\beta}\delta^y,{\Pi}\Psi)_{\mathcal T_h}- \langle \widehat{\bm \delta}_1, {\Pi}\Psi-P_M \Psi \rangle_{\partial{{\mathcal{T}_h}}},\\
	&=(\nabla\cdot\bm q,\delta^{\Psi})_{\mathcal T_h} + ({\bm q},\nabla\Psi)_{\mathcal T_h} - ({\bm \Pi \bm q},\nabla\Psi)_{\mathcal T_h} \\
	&\quad+( \bm \beta \delta^y, \nabla {\Pi}\Psi)_{{\mathcal{T}_h}}+(\nabla\cdot\bm{\beta}\delta^y,{\Pi}\Psi)_{\mathcal T_h}- \langle \widehat{\bm \delta}_1, {\Pi}\Psi-P_M \Psi \rangle_{\partial{{\mathcal{T}_h}}},\\
	&=(\nabla \cdot \bm q,\delta^{\Psi})+(\delta^{\bm q},\nabla \delta^\Psi)_{\mathcal{T}_h}+( \bm \beta \delta^y, \nabla {\Pi}\Psi)_{{\mathcal{T}_h}}\\
	&\quad+(\nabla\cdot\bm{\beta}\delta^y,{\Pi}\Psi)_{\mathcal T_h}- \langle \widehat{\bm \delta}_1, {\Pi}\Psi-P_M \Psi \rangle_{\partial{{\mathcal{T}_h}}}.
	\end{align*}
	The two equalities above give
	\begin{align*}
	\|  \varepsilon_h^y\|_{\mathcal T_h}^2  &= - \langle \bm{\beta}\cdot\bm n (\varepsilon^y_h - \varepsilon^{\widehat y}_h), \delta^{\Psi} \rangle_{\partial\mathcal T_h} + (\nabla \varepsilon_h^y,\bm{\beta}\delta^{\Psi})_{{\mathcal{T}_h}}+( \bm \beta \delta^y, \nabla {\Pi}\Psi)_{{\mathcal{T}_h}}\\
	& \quad +(\nabla\cdot\bm{\beta}\delta^y,{\Pi}\Psi)_{\mathcal T_h}
	+\langle   (h^{-1}+\tau_1)(\varepsilon^y_h-\varepsilon^{\widehat y}_h) + \widehat{\bm \delta}_1,\delta^\Psi - \delta^{\widehat\Psi}\rangle_{\partial\mathcal T_h}\\
	&\quad -(\varepsilon_h^y-\mathcal I_h(\varepsilon_h^y,\varepsilon_h^{\widehat{y}}),\nabla\cdot\delta^{\bm \Phi})_{\mathcal{T}_h}+(\nabla\mathcal I_h(\varepsilon_h^y,\varepsilon_h^{\widehat{y}}),\delta^{\bm\Phi})_{\mathcal{T}_h}\\
	&\quad +(\nabla \cdot\bm q,\delta^{\Psi})+(\delta^{\bm q},\nabla \delta^\Psi)_{\mathcal{T}_h}\\
	&=:\sum_{i=1}^9 R_i.
	\end{align*}
	Bounds for $R_1$ to $R_5$ have been obtained in Part I \cite{HuMateosSinglerZhangZhang1}; we have 
	\begin{align*}
	\sum_{i=1}^5 R_i  \lesssim  ( h^{s_{\bm q}+1}\norm{\bm q}_{s^{\bm q},\Omega} + h^{s_{y}}\norm{y}_{s^{y},\Omega}) \|\varepsilon_h^y\|_{\mathcal T_h}.
	\end{align*}
	For the terms $R_6$ and $R_7$, Lemma \ref{error_esti_inter} and Lemma \ref{lemma:step2_main_lemma} give
	\begin{align*}
	R_6&= -(\varepsilon_h^y-\mathcal I_h(\varepsilon_h^y,\varepsilon_h^{\widehat{y}}),\nabla\cdot \bm \Phi)_{\mathcal{T}_h}\\
	&\le \| \varepsilon_h^y-\mathcal I_h(\varepsilon_h^y,\varepsilon_h^{\widehat{y}})\|_{\mathcal{T}_h} \|\nabla\cdot \bm \Phi\|_{\mathcal{T}_h}\\
	&\lesssim h (\norm{\varepsilon_h^{\bm{q}}}_{\mathcal{T}_h}+h^{-\frac 12}\|{\varepsilon_h^y-\varepsilon_h^{\widehat{y}}}\|_{\partial \mathcal T_h}) \|\nabla\cdot \bm \Phi\|_{\mathcal{T}_h}\\
	& \lesssim  ( h^{s_{\bm q}+1}\norm{\bm q}_{s^{\bm q},\Omega} + h^{s_{y}}\norm{y}_{s^{y},\Omega}) \|\varepsilon_h^y\|_{\mathcal T_h},\\
	R_7& = (\nabla\mathcal I_h(\varepsilon_h^y,\varepsilon_h^{\widehat{y}}),\delta^{\bm\Phi})_{\mathcal{T}_h}\\
	& \le \|\nabla\mathcal I_h(\varepsilon_h^y,\varepsilon_h^{\widehat{y}})\|_{\mathcal{T}_h} \|\delta^{\bm\Phi}\|_{\mathcal{T}_h}\\
	&\lesssim h (\norm{\varepsilon_h^{\bm{q}}}_{\mathcal{T}_h}+h^{-\frac 12}\|{\varepsilon_h^y-\varepsilon_h^{\widehat{y}}}\|_{\partial \mathcal T_h}) \|\delta^{\bm \Phi}\|_{\mathcal{T}_h}\\
	& \lesssim  ( h^{s_{\bm q}+1}\norm{\bm q}_{s^{\bm q},\Omega} + h^{s_{y}}\norm{y}_{s^{y},\Omega}) \|\varepsilon_h^y\|_{\mathcal T_h}.
	\end{align*}
	For $R_8$, we have
	\begin{align*}
	R_8 &\le \| \nabla\cdot\bm q \|_{\mathcal{T}_h}\| \delta^{\Psi} \|_{\mathcal{T}_h} \lesssim h^2\| \Psi \|_{2,\Omega}\\
	&\lesssim h^2\| \varepsilon_h^y \|_{\mathcal{T}_h}.
	\end{align*}
	Applying the triangle inequality for $R_9$ gives
	\begin{align*}
	R_9 &\le  \| \delta^{\bm q} \|_{\mathcal T_h} \| \nabla \delta ^{\Psi} \|_{\mathcal T_h} \lesssim   h^{s_{\bm q}+1}\norm{\bm q}_{s^{\bm q},\Omega}  \|\varepsilon_h^y\|_{\mathcal T_h}.
	\end{align*}
	Using $ s_{\bm q} \in [0,1] $ and summing the estimates for $R_1$ to $R_9$ completes the proof.
\end{proof}

The triangle inequality gives optimal convergence rates for $\|\bm q -\bm q_h(u)\|_{\mathcal T_h}$ and $\|y -y_h(u)\|_{\mathcal T_h}$:
\begin{lemma}\label{lemma:step3_conv_rates}
	\begin{subequations}
		\begin{align}
		\|\bm q -\bm q_h(u)\|_{\mathcal T_h}&\le \|\delta^{\bm q}\|_{\mathcal T_h} + \|\varepsilon_h^{\bm q}\|_{\mathcal T_h} \lesssim  h^{s_{\bm q}}\norm{\bm q}_{s^{\bm q},\Omega} + h^{s_{y}-1}\norm{y}_{s^{y},\Omega},\\
		\|y -y_h(u)\|_{\mathcal T_h}&\le \|\delta^{y}\|_{\mathcal T_h} + \|\varepsilon_h^{y}\|_{\mathcal T_h} \lesssim h^{s_{\bm q}+1}\norm{\bm q}_{s^{\bm q},\Omega} + h^{s_{y}}\norm{y}_{s^{y},\Omega}.
		\end{align}
	\end{subequations}
\end{lemma}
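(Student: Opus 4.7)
The plan is to observe that this lemma is an essentially immediate consequence of the triangle inequality combined with three ingredients already established in the paper: the standard $L^2$ projection error bounds in \eqref{classical_ine}, the estimate for $\|\varepsilon_h^{\bm q}\|_{\mathcal T_h}$ from \Cref{lemma:step2_main_lemma}, and the duality estimate for $\|\varepsilon_h^{y}\|_{\mathcal T_h}$ from \Cref{e_sec}. No new analysis is required.

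Concretely, first I would write the flux error as
\[
\bm q - \bm q_h(u) = (\bm q - \bm\Pi \bm q) + (\bm\Pi \bm q - \bm q_h(u)) = \delta^{\bm q} + \varepsilon_h^{\bm q},
\]
which is exactly the decomposition encoded in the notation \eqref{notation_1}. The triangle inequality gives the first inequality in each display of the lemma statement. Then I would invoke the projection bound $\|\delta^{\bm q}\|_{\mathcal T_h} \lesssim h^{s_{\bm q}} \|\bm q\|_{s_{\bm q},\Omega}$ from \eqref{classical_ine} and the bound from \Cref{lemma:step2_main_lemma}, namely $\|\varepsilon_h^{\bm q}\|_{\mathcal T_h} \lesssim h^{s_{\bm q}}\|\bm q\|_{s_{\bm q},\Omega} + h^{s_{y}-1}\|y\|_{s_{y},\Omega}$, and sum the two to obtain the desired rate for $\|\bm q - \bm q_h(u)\|_{\mathcal T_h}$.

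For the state, I would repeat the same decomposition $y - y_h(u) = \delta^{y} + \varepsilon_h^{y}$, apply the triangle inequality, then use the projection bound $\|\delta^{y}\|_{\mathcal T_h} \lesssim h^{s_{y}} \|y\|_{s_{y},\Omega}$ from \eqref{classical_ine} together with the duality estimate $\|\varepsilon_h^{y}\|_{\mathcal T_h} \lesssim h^{s_{\bm q}+1}\|\bm q\|_{s_{\bm q},\Omega} + h^{s_{y}}\|y\|_{s_{y},\Omega}$ proved in \Cref{e_sec}. Summing these two contributions yields the second inequality.

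There really is no main obstacle here, since the substantive work, in particular the careful treatment avoiding any use of a boundary trace of $\bm q$ via the interpolation operator $\mathcal I_h$, has already been carried out in \Cref{lemma:step2_main_lemma} and \Cref{e_sec}. The only minor point worth flagging is that the bound on $y$ is actually one power of $h$ better than what the naive triangle inequality with the flux bound would suggest; this improvement is already built into \Cref{e_sec} through the duality argument with a convex-domain $H^2$-regular dual problem, so no extra argument is needed at this stage.
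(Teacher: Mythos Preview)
Your proposal is correct and matches the paper's approach exactly: the paper does not even give a formal proof of this lemma, simply stating that the triangle inequality yields the result, which implicitly relies on the projection bounds \eqref{classical_ine}, \Cref{lemma:step2_main_lemma}, and \Cref{e_sec} precisely as you describe.
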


\subsubsection{Step 4: The error equation for part 2 of the auxiliary problem \eqref{HDG_u_b}}

Next, we estimate the error between the exact state $z$ and flux $\bm p$ satisfying the mixed form \eqref{mixed_a}-\eqref{mixed_d} of the optimality system and the solutions $ z_h(u) $ and $ \bm p_h(u) $ of the auxiliary problem. Define
%
%
%
\begin{align}\label{notation_3}
\begin{aligned}
\delta^{\bm p} &=\bm p-{\bm\Pi}\bm p,\\
\delta^z&=z- \Pi z,\\
\delta^{\widehat z} &= z-P_M z,\\
\widehat {\bm\delta}_2 &= -\bm{\beta}\cdot\bm n \delta^{\widehat z} + (h^{-1}+\tau_2)(\delta^z - \delta^{\widehat z}),
\end{aligned}
&&
\begin{aligned}
\varepsilon^{\bm p}_h &= {\bm\Pi} \bm p-\bm p_h(u),\\
\varepsilon^{z}_h &= \Pi z-z_h(u),\\
\varepsilon^{\widehat z}_h &= P_M z-\widehat{z}_h(u),\\
\ & \
\end{aligned}
\end{align}
where $\widehat z_h(u) = \widehat z_h^o(u)$ on $\varepsilon_h^o$ and $\widehat z_h(u) = 0$ on $\varepsilon_h^{\partial}$.  This gives $\varepsilon_h^{\widehat z} = 0$ on $\varepsilon_h^{\partial}$. 

\begin{lemma}\label{lemma:step4_first_lemma}
	We have
	\begin{align}
	\hspace{3em}&\hspace{-3em}  \mathscr B_2 (\varepsilon_h^{\bm p},\varepsilon_h^{z}, \varepsilon_h^{\widehat z}, \bm r_2, w_2, \mu_2) \nonumber\\
	&=-(\nabla\cdot\delta^{\bm p},w_2)_{\mathcal{T}_h}-\langle \bm \Pi \bm p\cdot\bm n,\mu_2 \rangle_{\partial \mathcal{T}_h\backslash\varepsilon_h^{\partial}}-( \bm \beta \delta^z, \nabla w_2)_{{\mathcal{T}_h}} \nonumber\\
	&\quad+(y-y_h(u),w_2)_{\mathcal T_h}- \langle \widehat{\bm \delta}_2, w_2 \rangle_{\partial{{\mathcal{T}_h}}} + \langle \widehat{\bm \delta}_2, \mu_2 \rangle_{\partial{{\mathcal{T}_h}}\backslash \varepsilon_h^{\partial}}.\label{error_equation_L2k2}
	\end{align}
\end{lemma}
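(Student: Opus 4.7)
My plan is to mimic the argument used for Step 1 (Lemma \ref{lemma:step1_first_lemma}), adapted to the adjoint HDG operator $\mathscr B_2$ and the adjoint PDE. The sign changes on the convection term and the homogeneous Dirichlet condition $z=0$ on $\Gamma$ are the bookkeeping differences to watch.

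First I will expand $\mathscr B_2(\bm\Pi\bm p,\Pi z,P_M z;\bm r_2,w_2,\mu_2)$ using the explicit definition \eqref{def_B2}. Then I will use the defining properties \eqref{L2_projection} of the $L^2$ projections $\bm\Pi$, $\Pi$, and $P_M$ to replace the projected quantities by $\bm p$, $z$, $z$ on the appropriate test functions in $\bm V_h$ and $W_h$, generating error terms $\delta^{\bm p}$, $\delta^z$, $\delta^{\widehat z}$ in each location where the test functions are \emph{not} polynomials of the correct degree (e.g.\ gradients and divergences that leave the polynomial space, and boundary terms against $M_h$). This will reorganize the identity into a piece involving $\bm p$, $z$ themselves and a piece involving the projection errors.

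Next I will invoke the fact that the exact adjoint $(\bm p,z)$ satisfies the mixed form \eqref{mixed_c}--\eqref{mixed_d}, written element-by-element:
\begin{align*}
(\bm p,\bm r_2)_{\mathcal T_h}-(z,\nabla\cdot\bm r_2)_{\mathcal T_h}+\langle z,\bm r_2\cdot\bm n\rangle_{\partial\mathcal T_h\setminus\varepsilon_h^\partial}&=0,\\
(\nabla\cdot\bm p,w_2)_{\mathcal T_h}+(\bm\beta z,\nabla w_2)_{\mathcal T_h}-\langle \bm\beta\cdot\bm n\, z,w_2\rangle_{\partial\mathcal T_h\setminus\varepsilon_h^\partial}&=(y-y_d,w_2)_{\mathcal T_h},
\end{align*}
using $z=0$ on $\Gamma$ (so boundary integrals over $\varepsilon_h^\partial$ drop) and single-valuedness of $z$ on interior faces. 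Inserting these identities into the expanded expression will turn the ``exact piece'' into $(y-y_d,w_2)_{\mathcal T_h}$ plus harmless boundary terms that can be absorbed into $\widehat{\bm\delta}_2$.

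Finally I will subtract the auxiliary equation \eqref{HDG_u_b}, which produces $-(y_h(u)-y_d,w_2)_{\mathcal T_h}$; combining this with the $(y-y_d,w_2)_{\mathcal T_h}$ obtained above yields the desired $(y-y_h(u),w_2)_{\mathcal T_h}$ on the right. The remaining boundary and volume contributions will be gathered so that (i) the single-valued face function $\mu_2$ against $\bm\beta\cdot\bm n\, z$ on interior faces vanishes, (ii) all projection errors coming from the jumps $\Pi z - P_M z$ and $\bm\beta\cdot\bm n\,(z-P_M z)$ assemble into the definition $\widehat{\bm\delta}_2 = -\bm\beta\cdot\bm n\,\delta^{\widehat z}+(h^{-1}+\tau_2)(\delta^z-\delta^{\widehat z})$, and (iii) the sole surviving divergence error is $-(\nabla\cdot\delta^{\bm p},w_2)_{\mathcal T_h}$ together with the boundary term $-\langle\bm\Pi\bm p\cdot\bm n,\mu_2\rangle_{\partial\mathcal T_h\setminus\varepsilon_h^\partial}$ (the latter surviving because $\bm\Pi\bm p$ is not exactly $\bm p$ on faces).

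The only mildly delicate point is tracking the sign of the convection term: $\mathscr B_2$ is built from the adjoint, so $\bm\beta$ enters with the opposite sign from $\mathscr B_1$, and this is exactly what flips $(\bm\beta\delta^y,\nabla w_1)_{\mathcal T_h}$ into $-(\bm\beta\delta^z,\nabla w_2)_{\mathcal T_h}$ on the right-hand side, and also changes the sign in front of $\bm\beta\cdot\bm n\,\delta^{\widehat z}$ inside $\widehat{\bm\delta}_2$. Apart from this, the calculation is structurally identical to Step 1, so I expect no substantive obstacle beyond careful sign and boundary bookkeeping.
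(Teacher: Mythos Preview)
Your proposal is correct and follows exactly the route the paper intends: the paper omits the proof, saying it is similar to that of \Cref{lemma:step1_first_lemma}, and your plan is precisely that mimicry, with the right bookkeeping differences (opposite sign on the convection terms, absence of the $(\nabla\cdot\bm\beta\, z_h,w_2)$ term in $\mathscr B_2$, and the homogeneous Dirichlet condition $z=0$ on $\Gamma$). One small remark: the surviving term $-\langle\bm\Pi\bm p\cdot\bm n,\mu_2\rangle_{\partial\mathcal T_h\setminus\varepsilon_h^\partial}$ is simply what remains from the $-\langle\bm p_h\cdot\bm n,\mu_2\rangle$ piece of $\mathscr B_2$ after substituting $\bm\Pi\bm p$; it is deliberately left in this form (rather than split as $\bm p\cdot\bm n$ plus $\delta^{\bm p}\cdot\bm n$) so that the identity parallels Step~1 and feeds directly into the $\mathcal I_h$-based estimates in Step~5.
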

The proof is similar to the proof of \Cref{lemma:step4_first_lemma} and is omitted.

\subsubsection{Step 5: Estimate for $\varepsilon_h^{\boldmath p}$}
We use the following discrete Poincar{\'e} inequality from \cite{HuMateosSinglerZhangZhang1} to estimate $\varepsilon_h^{\bm p}$.
\begin{lemma}\label{lemma:discr_Poincare_ineq}
	We have
	\begin{align}\label{poin_in}
	\|\varepsilon_h^z\|_{\mathcal T_h} \le C(\|\nabla \varepsilon_h^z\|_{\mathcal T_h} + h^{-\frac 1 2} \|\varepsilon_h^z - \varepsilon_h^{\widehat z}\|_{\partial\mathcal T_h}).
	\end{align}
\end{lemma}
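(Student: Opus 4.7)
The plan is to reduce the discrete Poincar\'e inequality to the classical continuous Poincar\'e inequality by exploiting the interpolation operator $\mathcal I_h$ from \Cref{sec:Projectionoperator2}. The crucial observation is that $\varepsilon_h^{\widehat z}$ vanishes on $\varepsilon_h^\partial$, so by property \eqref{H01} the function $\phi := \mathcal I_h(\varepsilon_h^z, \varepsilon_h^{\widehat z})$ lies in $H_0^1(\Omega)$. This allows us to invoke the standard Poincar\'e inequality
\[
\|\phi\|_\Omega \le C_P \|\nabla \phi\|_\Omega,
\]
which is the only place where the geometry of $\Omega$ enters.

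The argument then proceeds through two triangle inequalities. First I would write
\[
\|\varepsilon_h^z\|_{\mathcal T_h} \le \|\varepsilon_h^z - \phi\|_{\mathcal T_h} + \|\phi\|_\Omega \le \|\varepsilon_h^z - \phi\|_{\mathcal T_h} + C_P\|\nabla \phi\|_{\mathcal T_h},
\]
and then split the gradient term as
\[
\|\nabla \phi\|_{\mathcal T_h} \le \|\nabla \varepsilon_h^z\|_{\mathcal T_h} + \|\nabla(\varepsilon_h^z - \phi)\|_{\mathcal T_h}.
\]
What remains is to establish the two purely kinematic broken-norm bounds
\[
\|\varepsilon_h^z - \phi\|_{\mathcal T_h} \lesssim h^{1/2}\|\varepsilon_h^z - \varepsilon_h^{\widehat z}\|_{\partial \mathcal T_h}, \quad  \|\nabla(\varepsilon_h^z - \phi)\|_{\mathcal T_h} \lesssim h^{-1/2}\|\varepsilon_h^z - \varepsilon_h^{\widehat z}\|_{\partial \mathcal T_h}.
\]
Combining these inequalities and absorbing the $h^{1/2}$ term into $h^{-1/2}\|\varepsilon_h^z - \varepsilon_h^{\widehat z}\|_{\partial\mathcal T_h}$ (which is valid because $h$ is bounded above) yields the stated inequality.

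The main obstacle is proving the two interpolation bounds above without relying on \Cref{error_esti_inter}, whose hypothesis \eqref{first_error_eq} requires an auxiliary flux $\bm v_h$ that is not available in this context. I would establish them directly from the construction of $\mathcal I_h$, decomposing
\[
\mathcal I_h(\varepsilon_h^z,\varepsilon_h^{\widehat z}) = \mathcal I_h^0 \varepsilon_h^{\widehat z} + \mathcal I_h^1\bigl(\varepsilon_h^z - \mathcal I_h^0\varepsilon_h^{\widehat z},\,\varepsilon_h^{\widehat z} - \mathcal I_h^0\varepsilon_h^{\widehat z}\bigr),
\]
controlling the Oswald-averaged piece $\mathcal I_h^0 \varepsilon_h^{\widehat z}$ via nodal deviations $\varepsilon_h^{\widehat z} - m_K(\varepsilon_h^{\widehat z})$ in the spirit of \Cref{error_auxilliary} together with a scaled trace inequality, and handling the $\mathcal I_h^1$ piece through the local stability estimate \eqref{basic_err_est} combined with a standard inverse inequality on each simplex to pass from $L^2$ to $H^1$ control. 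The argument is standard once the right scaling is identified, but it is the one place where the low-regularity machinery must do real work independent of the mixed-method structure.
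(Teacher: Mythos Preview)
The paper does not prove this lemma; it simply quotes the inequality from Part~I. Your overall strategy---pass to an $H_0^1$-conforming interpolant, invoke the continuous Poincar\'e inequality, and control the remainder by the jump term---is the standard route and is correct in spirit. There is, however, a genuine gap in the execution.

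The two ``purely kinematic'' bounds you claim,
\[
\|\varepsilon_h^z - \phi\|_{\mathcal T_h}\lesssim h^{1/2}\|\varepsilon_h^z-\varepsilon_h^{\widehat z}\|_{\partial\mathcal T_h},
\qquad
\|\nabla(\varepsilon_h^z-\phi)\|_{\mathcal T_h}\lesssim h^{-1/2}\|\varepsilon_h^z-\varepsilon_h^{\widehat z}\|_{\partial\mathcal T_h},
\]
are \emph{false} for the operator $\mathcal I_h$ of \Cref{sec:Projectionoperator2}. If they held, then whenever $\varepsilon_h^z|_{\partial K}=\varepsilon_h^{\widehat z}$ (so $\varepsilon_h^z\in W_h\cap H_0^1(\Omega)$) we would have $\mathcal I_h(\varepsilon_h^z,\varepsilon_h^{\widehat z})=\varepsilon_h^z$. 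But $\mathcal I_h$ maps into $\mathcal I_h^0 M_h+\mathbb S(K)+(\prod_j\lambda_j)\mathcal P_k(K)$, which does \emph{not} contain $\mathcal P_{k+1}(K)$; a direct computation with the hat function at an interior vertex (take $d=2$, $k=0$) shows $\mathcal I_h(w,w|_{\varepsilon_h})\neq w$. The sketch you give for proving these bounds also leans on \Cref{error_auxilliary}, which---like \Cref{error_esti_inter}---carries the hypothesis \eqref{first_error_eq} you are trying to avoid.

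There are two clean fixes. First, your assertion that the flux $\bm v_h$ ``is not available'' is mistaken: the triple $(\varepsilon_h^{\bm p},\varepsilon_h^z,\varepsilon_h^{\widehat z})$ \emph{does} satisfy \eqref{first_error_eq} (take $\bm r_2$ nonzero, $w_2=\mu_2=0$ in \eqref{HDG_u_b}). Testing \eqref{first_error_eq} with $\bm r=\varepsilon_h^{\bm p}$ and integrating by parts yields the reverse estimate $\|\varepsilon_h^{\bm p}\|_{\mathcal T_h}\lesssim\|\nabla\varepsilon_h^z\|_{\mathcal T_h}+h^{-1/2}\|\varepsilon_h^z-\varepsilon_h^{\widehat z}\|_{\partial\mathcal T_h}$; combining this with \Cref{error_esti_inter} and the continuous Poincar\'e inequality gives the lemma directly. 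Second, if you want a proof that really is independent of the mixed structure, replace $\mathcal I_h$ by the standard Oswald averaging interpolant into the continuous subspace $W_h\cap H_0^1(\Omega)$, which \emph{does} reproduce continuous functions and for which your two jump bounds are classical.
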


\begin{lemma}\label{lemma:step5_main_lemma}
	We have
	\begin{subequations}
		\begin{align}
		\hspace{3em}&\hspace{-3em}\norm{\varepsilon_h^{\bm{p}}}_{\mathcal{T}_h}+h^{-\frac 1 2}\|{\varepsilon_h^z-\varepsilon_h^{\widehat{z}}}\|_{\partial \mathcal T_h}\nonumber \\ 
		&\lesssim  h^{s_{\bm p}}\norm{\bm p}_{s^{\bm p},\Omega} + h^{s_{z}-1}\norm{z}_{s^{z},\Omega} +h^{s_{\bm q}+1}\norm{\bm q}_{s^{\bm q},\Omega} + h^{s_{y}}\norm{y}_{s^{y},\Omega},\label{error_p}\\
		\norm{\varepsilon_h^{{z}}}_{\mathcal{T}_h} &\lesssim  h^{s_{\bm p}}\norm{\bm p}_{s^{\bm p},\Omega} + h^{s_{z}-1}\norm{z}_{s^{z},\Omega} +h^{s_{\bm q}+1}\norm{\bm q}_{s^{\bm q},\Omega} + h^{s_{y}}\norm{y}_{s^{y},\Omega}.\label{error_z}
		\end{align}
	\end{subequations}
\end{lemma}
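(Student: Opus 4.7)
The strategy is to mimic the proof of \Cref{lemma:step2_main_lemma}, with two new ingredients: handling the extra source term $(y-y_h(u),w_2)_{\mathcal T_h}$ that appears on the right-hand side of the error equation \eqref{error_equation_L2k2}, and producing a separate estimate for $\norm{\varepsilon_h^z}_{\mathcal T_h}$ via a discrete Poincar\'e argument.

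First, take $(\bm r_2,w_2,\mu_2) = (\varepsilon_h^{\bm p},\varepsilon_h^{z},\varepsilon_h^{\widehat z})$ in the error equation from \Cref{lemma:step4_first_lemma}. Applying the energy identity for $\mathscr B_2$ in \Cref{property_B}, and using that $\varepsilon_h^{\widehat z}=0$ on $\varepsilon_h^\partial$ (so that the interior and boundary contributions combine) together with \textbf{(A2)}--\textbf{(A3)}, yields a lower bound of the form
\begin{align*}
\norm{\varepsilon_h^{\bm p}}_{\mathcal T_h}^2 + c\, h^{-1}\norm{\varepsilon_h^z - \varepsilon_h^{\widehat z}}_{\partial\mathcal T_h}^2 + \tfrac{1}{2}\|(-\nabla\cdot\bm\beta)^{1/2}\varepsilon_h^z\|_{\mathcal T_h}^2.
\end{align*}
On the right-hand side, the pair $-(\nabla\cdot\delta^{\bm p},\varepsilon_h^z)_{\mathcal T_h}-\langle \bm\Pi\bm p\cdot\bm n,\varepsilon_h^{\widehat z}\rangle_{\partial\mathcal T_h\backslash\varepsilon_h^\partial}$ is recast by inserting $\pm\mathcal I_h(\varepsilon_h^z,\varepsilon_h^{\widehat z})$, integrating by parts, and invoking \Cref{interpolation_projection} and \Cref{error_esti_inter}, exactly as the term $T_1$ was treated in \Cref{lemma:step2_main_lemma}. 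The convection contribution $-(\bm\beta\delta^z,\nabla\varepsilon_h^z)_{\mathcal T_h}$ is bounded via \Cref{nabla_ine} and Young's inequality, and the boundary contribution $-\langle\widehat{\bm\delta}_2,\varepsilon_h^z-\varepsilon_h^{\widehat z}\rangle_{\partial\mathcal T_h}$ by $4h\norm{\widehat{\bm\delta}_2}_{\partial\mathcal T_h}^2+\tfrac{1}{4h}\norm{\varepsilon_h^z-\varepsilon_h^{\widehat z}}_{\partial\mathcal T_h}^2$, using standard projection and trace estimates on $\delta^{\widehat z}$ and $\delta^z$.

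The genuinely new step is the treatment of the coupling term $(y-y_h(u),\varepsilon_h^z)_{\mathcal T_h}$. Taking $w_2=0$ and $\mu_2=0$ in \eqref{error_equation_L2k2} and using the definition \eqref{def_B2} shows that $(\varepsilon_h^{\bm p},\varepsilon_h^z,\varepsilon_h^{\widehat z})$ satisfies \eqref{first_error_eq}, so \Cref{nabla_ine} combined with the discrete Poincar\'e inequality \Cref{lemma:discr_Poincare_ineq} yields the crucial bound
\begin{align*}
\norm{\varepsilon_h^z}_{\mathcal T_h} \lesssim \norm{\varepsilon_h^{\bm p}}_{\mathcal T_h} + h^{-1/2}\norm{\varepsilon_h^z-\varepsilon_h^{\widehat z}}_{\partial\mathcal T_h}.
\end{align*}
Combining this with Cauchy--Schwarz, Young's inequality, and the bound $\norm{y-y_h(u)}_{\mathcal T_h} \lesssim h^{s_{\bm q}+1}\norm{\bm q}_{s_{\bm q},\Omega}+h^{s_y}\norm{y}_{s_y,\Omega}$ from \Cref{lemma:step3_conv_rates}, the coupling term contributes precisely the data quantities $h^{s_{\bm q}+1}\norm{\bm q}_{s_{\bm q},\Omega}+h^{s_y}\norm{y}_{s_y,\Omega}$ plus a piece absorbable into the left-hand side. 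Assembling all five estimates, absorbing, and taking square roots establishes \eqref{error_p}, and \eqref{error_z} then follows immediately by reapplying the above Poincar\'e-plus-\Cref{nabla_ine} estimate with \eqref{error_p} on the right.

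The main obstacle is the coupling term: unlike in \Cref{lemma:step2_main_lemma}, where the only data-dependent quantities on the right-hand side were projection errors of the solution itself, here we must control a term involving the previously-estimated primal error $y-y_h(u)$ tested against $\varepsilon_h^z$, a quantity not directly present in the LHS of the energy identity. The discrete Poincar\'e inequality of \Cref{lemma:discr_Poincare_ineq} is essential because it converts $\norm{\varepsilon_h^z}$ into quantities already controlled by the LHS, thereby enabling the standard absorption argument to close the estimate.
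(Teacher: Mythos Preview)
Your proposal is correct and follows essentially the same route as the paper: energy identity for $\mathscr B_2$, the $\mathcal I_h$-splitting for the $(\nabla\cdot\delta^{\bm p},\varepsilon_h^z)+\langle\bm\Pi\bm p\cdot\bm n,\varepsilon_h^{\widehat z}\rangle$ pair, \Cref{nabla_ine} plus Young for the convection and trace terms, and \Cref{lemma:discr_Poincare_ineq} combined with \Cref{nabla_ine} and \Cref{lemma:step3_conv_rates} to absorb the coupling term $(y-y_h(u),\varepsilon_h^z)_{\mathcal T_h}$, followed by the same Poincar\'e argument for \eqref{error_z}. The paper's proof is organized identically, with the four right-hand-side pieces labeled $T_1$--$T_4$.
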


\begin{proof}
	First, take $ (\bm v_h,w_h,\mu_h) = (\varepsilon_h^{\bm p},\varepsilon_h^{z}, \varepsilon_h^{\widehat z}) $ in the key inequality in \Cref{nabla_ine} to get
	\begin{align}\label{nabla_z}
	\|\nabla \varepsilon_h^z\|_{\mathcal T_h}  \lesssim  \|\varepsilon^{\bm p}_h\|_{\mathcal T_h}+h^{-\frac1 2}\|\varepsilon^z_h-\varepsilon^{\widehat z}_h\|_{\partial\mathcal T_h}.
	\end{align}
	Next, since $\varepsilon_h^{\widehat z}=0$ on $\varepsilon_h^\partial$, the energy identity for $ \mathscr B_2 $ in \Cref{property_B} gives
	\begin{align*}
	\hspace{1em}&\hspace{-1em} \mathscr B_2 (\varepsilon_h^{\bm p},\varepsilon_h^{ z}, \varepsilon_h^{\widehat z}, \varepsilon_h^{\bm p},\varepsilon_h^{z}, \varepsilon_h^{\widehat z}) \\
	&= (\varepsilon_h^{\bm{p}},\varepsilon_h^{\bm{p}})_{\mathcal{T}_h}+ \|(h^{-1}+\tau_2+\frac 12 \bm{\beta} \cdot\bm n)^{\frac 12 } (\varepsilon_h^z-\varepsilon_h^{\widehat{z}})\|_{\partial\mathcal T_h}^2+\frac 1 2\| (-\nabla\cdot\bm{\beta})^{\frac  1 2}\varepsilon_h^z\|_{\mathcal T_h}^2. 
	\end{align*}
	Take $(\bm r_2, w_2,\mu_2) = (\bm \varepsilon_h^{\bm p},\varepsilon_h^z,\varepsilon_h^{\widehat z})$ in the error equation \eqref{error_equation_L2k2} in \Cref{lemma:step4_first_lemma} to obtain
	\begin{align*}
	\hspace{4em}&\hspace{-4em} (\varepsilon_h^{\bm{p}},\varepsilon_h^{\bm{p}})_{\mathcal{T}_h} + \|(h^{-1}+\tau_2+\frac 12 \bm{\beta} \cdot\bm n)^{\frac 12 } (\varepsilon_h^z-\varepsilon_h^{\widehat{z}})\|_{\partial\mathcal T_h}^2+\frac 1 2\| (-\nabla\cdot\bm{\beta})^{\frac  1 2}\varepsilon_h^z\|_{\mathcal T_h}^2\\
	&= -(\nabla\cdot\delta^{\bm p},\varepsilon_h^z)_{\mathcal{T}_h}-\langle \bm \Pi \bm p\cdot\bm n,\varepsilon_h^{\widehat{z}} \rangle_{\partial \mathcal{T}_h}\\
	&\quad-( \bm \beta \delta^z, \nabla \varepsilon_h^z)_{{\mathcal{T}_h}}  -\langle \widehat {\bm\delta}_2,\varepsilon_h^z - \varepsilon_h^{\widehat z}\rangle_{\partial\mathcal T_h} + (y-y_h(u),\varepsilon_h^z)_{\mathcal T_h}\\
	& =: T_1+T_2+T_3+T_4.
	\end{align*}
	By the same argument as in the proof of  \Cref{lemma:step2_main_lemma}, apply \eqref{nabla_z} and Young's inequality to obtain
	\begin{align*}
	T_1 &=-(\nabla\cdot\delta^{\bm p},\varepsilon_h^z)_{\mathcal{T}_h}-\langle \bm \Pi \bm p\cdot\bm n,\varepsilon_h^{\widehat{z}} \rangle_{\partial \mathcal{T}_h}\\
	&=-(\nabla\cdot\bm p,\varepsilon_h^z-\mathcal I_h(\varepsilon_h^z,\varepsilon_h^{\widehat{z}}))_{\mathcal{T}_h}+(\delta^{\bm p},\nabla\mathcal I_h(\varepsilon_h^z,\varepsilon_h^{\widehat{z}}))_{\mathcal{T}_h}\\
	&=-(\nabla\cdot\delta^{\bm p},\varepsilon_h^z-\mathcal I_h(\varepsilon_h^z,\varepsilon_h^{\widehat{z}}))_{\mathcal{T}_h}+(\delta^{\bm p},\nabla\mathcal I_h(\varepsilon_h^z,\varepsilon_h^{\widehat{z}}))_{\mathcal{T}_h}\\
	&\le h\| \nabla\cdot \delta^{\bm p}\|_{\mathcal T_h} h^{-1}\|  \varepsilon_h^z-\mathcal I_h(\varepsilon_h^z,\varepsilon_h^{\widehat{z}})\|_{\mathcal{T}_h}+\|\delta^{\bm p}\|_{\mathcal{T}_h} \|\nabla\mathcal I_h(\varepsilon_h^z,\varepsilon_h^{\widehat{z}})\|_{\mathcal{T}_h}\\
	&\le C h^2\| \nabla\cdot\delta^{\bm p} \|_{\mathcal{T}_h}^2+C\|\delta^{\bm p}\|_{\mathcal{T}_h}^2+ \frac 1 8 \|\varepsilon_h^{\bm p}\|_{\mathcal{T}_h}^2+\frac 1 {8h}\|\varepsilon_h^z-\varepsilon_h^{\widehat{z}}\|_{\partial \mathcal{T}_h}^2,\\
	T_2 &=  -( \bm \beta \delta^z, \nabla \varepsilon_h^z)_{{\mathcal{T}_h}} \le C \| \delta^z\|_{\mathcal T_h}^2 + \frac 1 8
	\|\varepsilon_h^{\bm{p}}\|_{\mathcal T_h}^2 + \frac 1 {8h} \|{\varepsilon_h^z-\varepsilon_h^{\widehat{z}}}\|_{\partial \mathcal T_h}^2,\\
	T_3 &=  -\langle \widehat {\bm\delta}_2,\varepsilon_h^z - \varepsilon_h^{\widehat z}\rangle_{\partial\mathcal T_h} \le 8h \|\widehat {\bm\delta}_2\|_{\partial\mathcal T_h}^2 + \frac 1 8
	\|\varepsilon_h^{\bm{p}}\|_{\mathcal T_h}^2 + \frac 1 {8h} \|{\varepsilon_h^z-\varepsilon_h^{\widehat{z}}}\|_{\partial \mathcal T_h}^2.
	\end{align*}
	For the term $T_4$, we have
	\begin{align*}
	T_4&= (y-y_h(u),\varepsilon_h^z)_{\mathcal T_h} \le  \|y-y_h(u)\|_{\mathcal T_h} \|\varepsilon_h^z\|_{\mathcal T_h}\\
	&\le C\|y-y_h(u)\|_{\mathcal T_h} (\|\nabla \varepsilon_h^z\|_{\mathcal T_h} + h^{-\frac 1 2} \|\varepsilon_h^z - \varepsilon_h^{\widehat z}\|_{\partial\mathcal T_h})\\
	&\le  C\|y-y_h(u)\|_{\mathcal T_h} (\|\varepsilon^{\bm p}_h\|_{\mathcal T_h}+h^{-\frac1 2}\|\varepsilon^z_h-\varepsilon^{\widehat z}_h\|_{\partial\mathcal T_h})\\
	&\le  C\|y-y_h(u)\|_{\mathcal T_h}^2 + \frac 1 8
	\|\varepsilon_h^{\bm{p}}\|_{\mathcal T_h}^2 + \frac 1 {8h} \|{\varepsilon_h^z-\varepsilon_h^{\widehat{z}}}\|_{\partial \mathcal T_h}^2.
	\end{align*}
	Summing $T_1$ to $T_4$ gives \eqref{error_p}; then \eqref{poin_in}, \eqref{error_p}, and \eqref{nabla_z} together imply \eqref{error_z}.
\end{proof}

The triangle inequality gives optimal convergence rates for $\|\bm p -\bm p_h(u)\|_{\mathcal T_h}$ and $\|z -z_h(u)\|_{\mathcal T_h}$:
\begin{lemma}\label{lemma:step6_conv_rates}
	\begin{subequations}
		\begin{align}
		\|\bm p -\bm p_h(u)\|_{\mathcal T_h}
		&\lesssim h^{s_{\bm p}}\norm{\bm p}_{s^{\bm p},\Omega}  + h^{s_{z}-1}\norm{z}_{s^{z},\Omega} +h^{s_{\bm q}+1}\norm{\bm q}_{s^{\bm q},\Omega} + h^{s_{y}}\norm{y}_{s^{y},\Omega},\\
		\|z -z_h(u)\|_{\mathcal T_h} &\lesssim  h^{s_{\bm p}}\norm{\bm p}_{s^{\bm p},\Omega} + h^{s_{z}-1}\norm{z}_{s^{z},\Omega} + h^{s_{\bm q}+1}\norm{\bm q}_{s^{\bm q},\Omega} + h^{s_{y}}\norm{y}_{s^{y},\Omega}.
		\end{align}
	\end{subequations}
\end{lemma}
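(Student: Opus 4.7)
The plan is to derive \Cref{lemma:step6_conv_rates} as an immediate consequence of the triangle inequality together with the bounds already obtained in \Cref{lemma:step5_main_lemma} and standard $L^2$ projection error estimates. Using the error decomposition from the notation \eqref{notation_3}, write
$$
\bm p - \bm p_h(u) = (\bm p - \bm \Pi \bm p) + (\bm \Pi \bm p - \bm p_h(u)) = \delta^{\bm p} + \varepsilon_h^{\bm p}, \qquad z - z_h(u) = \delta^z + \varepsilon_h^z.
$$
Then apply the triangle inequality to obtain
$$
\|\bm p - \bm p_h(u)\|_{\mathcal T_h} \le \|\delta^{\bm p}\|_{\mathcal T_h} + \|\varepsilon_h^{\bm p}\|_{\mathcal T_h}, \qquad \|z - z_h(u)\|_{\mathcal T_h} \le \|\delta^z\|_{\mathcal T_h} + \|\varepsilon_h^z\|_{\mathcal T_h}.
$$

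Next, I would invoke the classical $L^2$ projection error estimates (entirely analogous to the bounds recalled in \eqref{classical_ine} for $\bm q$ and $y$) to control the projection errors:
$$
\|\delta^{\bm p}\|_{\mathcal T_h} \lesssim h^{s_{\bm p}} \|\bm p\|_{s_{\bm p}, \Omega}, \qquad \|\delta^z\|_{\mathcal T_h} \lesssim h^{s_z} \|z\|_{s_z, \Omega}.
$$
For the remaining terms, I substitute the bounds \eqref{error_p} and \eqref{error_z} from \Cref{lemma:step5_main_lemma}, which already contain each of the four scale factors $h^{s_{\bm p}}\|\bm p\|_{s_{\bm p},\Omega}$, $h^{s_z-1}\|z\|_{s_z,\Omega}$, $h^{s_{\bm q}+1}\|\bm q\|_{s_{\bm q},\Omega}$, $h^{s_y}\|y\|_{s_y,\Omega}$.

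Putting these together, the projection terms $h^{s_{\bm p}}\|\bm p\|_{s_{\bm p},\Omega}$ and $h^{s_z}\|z\|_{s_z,\Omega}$ are absorbed by the first two terms on the right-hand side of \eqref{error_p}--\eqref{error_z} (noting $h^{s_z}\le h^{s_z-1}$ for $h$ sufficiently small, up to a constant), which yields exactly the two claimed estimates. There is essentially no obstacle in this step — it is a pure bookkeeping argument that collects the estimate on $\varepsilon_h^{\bm p}$, $\varepsilon_h^z$ from the previous lemma and adds the projection-approximation contribution via the triangle inequality. All of the analytical difficulty has already been absorbed into Steps 1--5, and this final lemma simply packages the result into convergence rates for the auxiliary solution $(\bm p_h(u), z_h(u))$ relative to $(\bm p, z)$.
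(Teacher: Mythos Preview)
Your proposal is correct and matches the paper's approach exactly: the paper simply states that the triangle inequality gives these rates, using the decomposition through the $L^2$ projection together with \Cref{lemma:step5_main_lemma} and the projection error bounds, precisely as you have written.
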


\subsubsection{Step 6: Estimates for $\|u-u_h\|_{\varepsilon_h^\partial}$ and $\norm {y-y_h}_{\mathcal T_h}$}

To obtain the main result, we estimate the error between the solution of the auxiliary problem and the HDG discretized optimality system \eqref{HDG_full_discrete}.  Define
\begin{equation*}
\begin{split}
\zeta_{\bm q} &=\bm q_h(u)-\bm q_h,\quad\zeta_{y} = y_h(u)-y_h,\quad\zeta_{\widehat y} = \widehat y_h(u)-\widehat y_h,\\
\zeta_{\bm p} &=\bm p_h(u)-\bm p_h,\quad\zeta_{z} = z_h(u)-z_h,\quad\zeta_{\widehat z} = \widehat z_h(u)-\widehat z_h,
\end{split}
\end{equation*}
where $ \widehat y_h = \widehat y_h^o $ on $ \varepsilon_h^o $, $ \widehat y_h = u_h $ on $ \varepsilon_h^\partial $, $ \widehat z_h = \widehat z_h^o $ on $ \varepsilon_h^o $, and $ \widehat z_h = 0 $ on $ \varepsilon_h^\partial $.  This gives $ \zeta_{\widehat z} = 0 $ on $ \varepsilon_h^\partial $.

Subtracting the two problems gives the error equations
\begin{subequations}\label{eq_yh}
	\begin{align}
	\mathscr B_1(\zeta_{\bm q},\zeta_y,\zeta_{\widehat y};\bm r_1, w_1,\mu_1)&=-\langle P_M u-u_h, \bm r_1\cdot \bm n + (\bm{\beta} \cdot\bm n -h^{-1}-\tau_1)w_1\rangle_{\varepsilon_h^\partial},\label{eq_yh_yhu}\\
	\mathscr B_2(\zeta_{\bm p},\zeta_z,\zeta_{\widehat z};\bm r_2, w_2,\mu_2)&=(\zeta_y, w_2)_{\mathcal T_h}\label{eq_zh_zhu}.
	\end{align}
\end{subequations}

\begin{lemma}
	If \textbf{(A1)} and \textbf{(A2)} hold, then
	\begin{align*}
	\gamma\norm{u-u_h}_{\varepsilon_h^\partial}^2  + \norm {\zeta_y}_{\mathcal T_h}^2 &= \langle \gamma u+\bm p_h(u)\cdot\bm n +h^{-1}  z_h(u) + \tau_2 z_h(u),u-u_h\rangle_{\varepsilon_h^\partial} \\
	&\quad- \langle \gamma u_h+\bm p_h\cdot\bm n + h^{-1}  z_h+\tau_2 z_h,u-u_h\rangle_{\varepsilon_h^\partial}.
	\end{align*}
\end{lemma}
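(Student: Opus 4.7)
The plan is to exploit the antisymmetry expressed in \Cref{identical_equa} by testing the two error equations \eqref{eq_yh_yhu} and \eqref{eq_zh_zhu} against each other's unknowns with appropriate signs. Specifically, I would take $(\bm r_1, w_1, \mu_1) = (\zeta_{\bm p}, -\zeta_z, -\zeta_{\widehat z})$ in \eqref{eq_yh_yhu} and $(\bm r_2, w_2, \mu_2) = (-\zeta_{\bm q}, \zeta_y, \zeta_{\widehat y})$ in \eqref{eq_zh_zhu}. Note that $\zeta_{\widehat z}=0$ on $\varepsilon_h^\partial$ (so its interior restriction can serve as a test in $M_h(o)$), while $\zeta_{\widehat y}|_{\varepsilon_h^o}\in M_h(o)$ is the natural interior trace difference; this is exactly the configuration needed to invoke \Cref{identical_equa}.

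After these substitutions, adding the two identities makes the left-hand sides cancel by \Cref{identical_equa}, the right-hand side of \eqref{eq_zh_zhu} contributes $\|\zeta_y\|_{\mathcal T_h}^2$, and the right-hand side of \eqref{eq_yh_yhu} produces a boundary integral on $\varepsilon_h^\partial$. This yields
\begin{equation*}
\|\zeta_y\|_{\mathcal T_h}^2 = \langle P_M u - u_h, \zeta_{\bm p}\cdot\bm n + (h^{-1}+\tau_1-\bm\beta\cdot\bm n)\zeta_z\rangle_{\varepsilon_h^\partial}.
\end{equation*}
Then I would use assumption \textbf{(A2)} $\tau_1 = \tau_2 + \bm\beta\cdot\bm n$ to rewrite the factor multiplying $\zeta_z$ as $h^{-1}+\tau_2$.

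Next I would observe that, on each $e\in\varepsilon_h^\partial$, the quantity $\zeta_{\bm p}\cdot\bm n + (h^{-1}+\tau_2)\zeta_z$ is a polynomial of degree at most $k+1$ (since $\bm p_h,\bm p_h(u)\in\bm V_h$ gives the normal trace in $\mathcal P^k(e)$, $z_h,z_h(u)\in W_h$ gives $\mathcal P^{k+1}(e)$, and by \textbf{(A1)} $\tau_2$ is piecewise constant), hence lies in $M_h(\partial)$. By the defining property of $P_M$, one can drop $P_M$ and replace $P_M u - u_h$ by $u-u_h$ in the pairing.

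Finally, I would add $\gamma\|u-u_h\|_{\varepsilon_h^\partial}^2=\gamma\langle u-u_h, u-u_h\rangle_{\varepsilon_h^\partial}$ to both sides and regroup the integrand as
\begin{equation*}
[\gamma u + \bm p_h(u)\cdot\bm n + h^{-1} z_h(u) + \tau_2 z_h(u)] - [\gamma u_h + \bm p_h\cdot\bm n + h^{-1} z_h + \tau_2 z_h],
\end{equation*}
using the definitions $\zeta_{\bm p}=\bm p_h(u)-\bm p_h$ and $\zeta_z=z_h(u)-z_h$. This delivers the stated identity. The only subtle points are the correct bookkeeping of the test functions on $\varepsilon_h^\partial$ (so that \Cref{identical_equa} applies as written) and the polynomial-degree check that lets us replace $P_M u$ by $u$; the rest is direct algebraic rearrangement.
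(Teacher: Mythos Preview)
Your proposal is correct and follows essentially the same approach as the paper's own proof: both test the error equations \eqref{eq_yh_yhu}--\eqref{eq_zh_zhu} with the choices $(\bm r_1,w_1,\mu_1)=(\zeta_{\bm p},-\zeta_z,-\zeta_{\widehat z})$ and $(\bm r_2,w_2,\mu_2)=(-\zeta_{\bm q},\zeta_y,\zeta_{\widehat y})$, invoke \Cref{identical_equa} to cancel the left sides, use \textbf{(A2)} to convert $\tau_1-\bm\beta\cdot\bm n$ to $\tau_2$, use \textbf{(A1)} to drop $P_M$, and then add $\gamma\|u-u_h\|_{\varepsilon_h^\partial}^2$ and regroup. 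The only difference is cosmetic ordering: the paper first expands the right-hand side of the claimed identity and then derives the key relation $\|\zeta_y\|_{\mathcal T_h}^2=\langle u-u_h,\zeta_{\bm p}\cdot\bm n+(h^{-1}+\tau_2)\zeta_z\rangle_{\varepsilon_h^\partial}$, whereas you derive that relation first and add the $\gamma$-term at the end.
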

\begin{proof}
	We have
	\begin{multline*}
	\langle \gamma u+\bm p_h(u)\cdot\bm n +h^{-1}  z_h(u) + \tau_2 z_h(u),u-u_h\rangle_{\varepsilon_h^\partial} - \langle \gamma u_h+\bm p_h\cdot\bm n + h^{-1}  z_h+\tau_2 z_h,u-u_h\rangle_{\varepsilon_h^\partial}\\
	= \gamma\norm{u-u_h}_{\varepsilon_h^\partial}^2 +\langle \zeta_{\bm p}\cdot\bm n+h^{-1}  \zeta_z +\tau_2 \zeta_z,u-u_h\rangle_{\varepsilon_h^\partial}.
	\end{multline*}
	Next, \Cref{identical_equa} gives
	\begin{align*}
	\mathscr B_1 &(\zeta_{\bm q},\zeta_y,\zeta_{\widehat{y}};\zeta_{\bm p},-\zeta_{z},-\zeta_{\widehat z}) + \mathscr B_2(\zeta_{\bm p},\zeta_z,\zeta_{\widehat z};-\zeta_{\bm q},\zeta_y,\zeta_{\widehat{y}})  = 0.
	\end{align*}
	Also, since $\tau_2$ is piecewise constant on $\partial \mathcal T_h$ we have
	\begin{align*}
	\mathscr B_1(\zeta_{\bm q},\zeta_y,\zeta_{\widehat y};&\zeta_{\bm p}, -\zeta_{z},-\zeta_{\widehat{ z}}) + \mathscr B_2(\zeta_{\bm p},\zeta_z,\zeta_{\widehat z}; -\zeta_{\bm q}, \zeta_{y},\zeta_{\widehat{y}})\\
	&=  (\zeta_{ y},\zeta_{ y})_{\mathcal{T}_h} - \langle P_Mu-u_h, \zeta_{\bm p}\cdot \bm{n} + (h^{-1} +\tau_1-\bm{\beta} \cdot\bm n)\zeta_z \rangle_{{\varepsilon_h^{\partial}}}\\
	&= (\zeta_{ y},\zeta_{ y})_{\mathcal{T}_h} -\langle P_Mu-u_h, \zeta_{\bm p}\cdot \bm{n} + h^{-1}\zeta_z +\tau_2 \zeta_z  \rangle_{{\varepsilon_h^{\partial}}}\\
	&= (\zeta_{ y},\zeta_{ y})_{\mathcal{T}_h} -\langle u-u_h, \zeta_{\bm p}\cdot \bm{n} + h^{-1}\zeta_z +\tau_2 \zeta_z  \rangle_{{\varepsilon_h^{\partial}}}.
	\end{align*}
	The above equalities yield
	\begin{align*}
	(\zeta_{ y},\zeta_{ y})_{\mathcal{T}_h} = \langle u-u_h, \zeta_{\bm p}\cdot \bm{n} + h^{-1}\zeta_z +\tau_2 \zeta_z  \rangle_{{\varepsilon_h^{\partial}}}.
	\end{align*}
\end{proof}

\begin{theorem}
	We have
	\begin{align*}
	\norm{u-u_h}_{\varepsilon_h^\partial}&\lesssim h^{s_{\bm p}-\frac 1 2}\norm{\bm p}_{s_{\bm p},\Omega} +  h^{s_{z}-\frac 3 2}\norm{z}_{s_{z},\Omega} + h^{s_{\bm q}+\frac 1 2}\norm{\bm q}_{s_{\bm q},\Omega} + h^{s_{y}-\frac 12 }\norm{y}_{s_{y},\Omega},\\
	\norm {y-y_h}_{\mathcal T_h} &\lesssim h^{s_{\bm p}-\frac 1 2}\norm{\bm p}_{s_{\bm p},\Omega} +  h^{s_{z}-\frac 3 2}\norm{z}_{s_{z},\Omega} + h^{s_{\bm q}+\frac 1 2}\norm{\bm q}_{s_{\bm q},\Omega} + h^{s_{y}-\frac 12 }\norm{y}_{s_{y},\Omega}.
	\end{align*}
\end{theorem}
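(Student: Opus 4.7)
The strategy is to start from the identity derived in the preceding lemma,
\begin{align*}
\gamma\|u - u_h\|_{\varepsilon_h^\partial}^2 + \|\zeta_y\|_{\mathcal T_h}^2 &= \langle \gamma u + \bm p_h(u)\cdot\bm n + (h^{-1}+\tau_2) z_h(u), u - u_h\rangle_{\varepsilon_h^\partial}\\
&\quad - \langle \gamma u_h + \bm p_h\cdot\bm n + (h^{-1}+\tau_2) z_h, u - u_h\rangle_{\varepsilon_h^\partial},
\end{align*}
and to show first that the second bracket vanishes. Since $\bm p_h\cdot\bm n|_e \in \mathcal P_k(e)$, $z_h|_e\in\mathcal P_{k+1}(e)$, $\tau_2$ is piecewise constant by \textbf{(A1)}, and $u_h \in M_h(\partial)$, the full integrand lies in $M_h(\partial)$. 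Splitting $u - u_h = (u - P_M u) + (P_M u - u_h)$, the first piece is killed by $L^2$-orthogonality of $P_M$ and the second by the discrete optimality condition \eqref{HDG_full_discrete_e}.

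Next, I would exploit the continuous optimality conditions $\gamma u + \bm p \cdot \bm n = 0$ and $z = 0$ on $\Gamma$ to rewrite the surviving bracket as
\[\gamma u + \bm p_h(u)\cdot\bm n + (h^{-1}+\tau_2) z_h(u) = (\bm p_h(u) - \bm p)\cdot\bm n + (h^{-1}+\tau_2)(z_h(u) - z),\]
apply Cauchy--Schwarz, and use Young's inequality to absorb $\tfrac{\gamma}{2}\|u-u_h\|_{\varepsilon_h^\partial}^2$ into the left-hand side. This reduces the proof to bounding $\|\bm p_h(u) - \bm p\|_{\varepsilon_h^\partial}$ and $h^{-1}\|z_h(u) - z\|_{\varepsilon_h^\partial}$ by the right-hand side of the theorem.

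The main obstacle is the factor $h^{-1}$ in front of $z_h(u) - z$, which threatens to cost an extra power of $h$ under a naive discrete trace inequality. The resolution is to observe that $\widehat z_h(u) = 0$ on $\varepsilon_h^\partial$ (since $z|_\Gamma = 0$ forces $P_M z = 0$ there), so on $\varepsilon_h^\partial$ one has $z_h(u) - z = -(\varepsilon_h^z - \varepsilon_h^{\widehat z}) - \delta^z$, and consequently
\[ h^{-1}\|z_h(u) - z\|_{\varepsilon_h^\partial} \lesssim h^{-1/2}\bigl(h^{-1/2}\|\varepsilon_h^z - \varepsilon_h^{\widehat z}\|_{\partial\mathcal T_h}\bigr) + h^{-1}\|\delta^z\|_{\varepsilon_h^\partial}. \]
The first term is controlled by \Cref{lemma:step5_main_lemma} and the second by the standard boundary projection estimate $\|\delta^z\|_{\varepsilon_h^\partial}\lesssim h^{s_z - 1/2}\|z\|_{s_z,\Omega}$, which together deliver the advertised $h^{s_z - 3/2}$ term. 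The boundary error $\|\bm p_h(u) - \bm p\|_{\varepsilon_h^\partial}$ is handled analogously: split $\bm p_h(u) - \bm p = -\varepsilon_h^{\bm p} - \delta^{\bm p}$, use the discrete trace inequality $\|\varepsilon_h^{\bm p}\|_{\varepsilon_h^\partial} \lesssim h^{-1/2}\|\varepsilon_h^{\bm p}\|_{\mathcal T_h}$ together with \Cref{lemma:step5_main_lemma}, and bound $\|\delta^{\bm p}\|_{\varepsilon_h^\partial}$ by a standard projection trace estimate. Finally, the estimate for $\|y - y_h\|_{\mathcal T_h}$ follows from $\|y - y_h\|_{\mathcal T_h} \leq \|y - y_h(u)\|_{\mathcal T_h} + \|\zeta_y\|_{\mathcal T_h}$, with the first summand controlled by \Cref{lemma:step3_conv_rates} and the second already produced as a byproduct of the energy identity above.
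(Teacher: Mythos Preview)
Your proposal is correct and follows essentially the same route as the paper: both start from the identity of the preceding lemma, eliminate the second bracket via the discrete optimality condition, insert the continuous condition $\gamma u + \bm p\cdot\bm n = 0$, and then bound the remaining boundary terms by splitting into $\varepsilon_h^{\bm p}$, $\delta^{\bm p}$, $\varepsilon_h^z - \varepsilon_h^{\widehat z}$, and $\delta^z$ and invoking \Cref{lemma:step5_main_lemma} together with trace/projection estimates. Your treatment of the second bracket (splitting $u-u_h$ and using $P_M$-orthogonality) is just a more explicit version of the paper's one-line observation that $\gamma u_h + \bm p_h\cdot\bm n + (h^{-1}+\tau_2)z_h$ actually vanishes pointwise on $\varepsilon_h^\partial$ because it lies in $M_h(\partial)$ and is orthogonal to all of $M_h(\partial)$.
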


\begin{proof}
	%
	The optimality conditions yield $\gamma u+\bm p \cdot\bm n=0$ and $\gamma u_h+\bm p_h\cdot\bm n + h^{-1} z_h +\tau_2 z_h=0$ on $\varepsilon_h^{\partial}$.  Therefore, the above lemma gives
	\begin{align*}
	\gamma\norm{u-u_h}_{\varepsilon_h^\partial}^2  + \norm {\zeta_y}_{\mathcal T_h}^2 &= \langle \gamma u+\bm p_h(u)\cdot\bm n + h^{-1} z_h(u) + \tau_2 z_h(u),u-u_h\rangle_{\varepsilon_h^\partial}\\
	&=\langle (\bm p_h(u)-\bm p)\cdot\bm n + h^{-1} z_h(u)+ \tau_2 z_h(u) ,u-u_h\rangle_{\varepsilon_h^\partial}.
	\end{align*}
	Since $\widehat z_h(u) = z=0$ on $\varepsilon_h^{\partial}$, we have
	\begin{align*}
	\norm {\bm p_h(u)-\bm p}_{\partial \mathcal T_h} &\le \norm {\bm p_h(u)-\bm{\Pi}\bm p}_{\partial \mathcal T_h} +\norm {\bm{\Pi}\bm p - \bm p}_{\partial \mathcal T_h}\\
	&\lesssim h^{-\frac 1 2}\norm {\varepsilon_h^{\bm p}}_{\mathcal T_h} +h^{s_{\bm p}-\frac 12 } \norm{\bm p}_{s^{\bm p},\Omega},\\
	\|z_h(u)\|_{\varepsilon_h^\partial} &=\|z_h(u) -\Pi z +\Pi z - z +P_M z-\widehat z_h(u) \|_{\varepsilon_h^\partial} \\
	&\le   \|\varepsilon_h^z -\varepsilon_h^{\widehat z}\|_{\partial\mathcal T_h} + \|\Pi z - z\|_{\varepsilon_h^\partial}.
	\end{align*}
	This implies
	\begin{align*}
	\norm{u-u_h}_{\varepsilon_h^\partial}  + \|\zeta_y\|_{\mathcal T_h}&\lesssim h^{-\frac 1 2}\norm {\varepsilon_h^{\bm p}}_{\mathcal T_h} +h^{s_{\bm p}-\frac 12 } \norm{\bm p}_{s^{\bm p},\Omega} \\
	&\quad+h^{-1}\|\varepsilon_h^z -\varepsilon_h^{\widehat z}\|_{\partial\mathcal T_h} + h^{-\frac 3 2} \|\delta^z\|_{\mathcal T_h}.
	\end{align*}
	\Cref{lemma:step5_main_lemma} and approximation properties of the $ L^2 $ projection give
	\begin{align*}
	\hspace{3em}&\hspace{-3em} \norm{u-u_h}_{\varepsilon_h^\partial}  + \|\zeta_y\|_{\mathcal T_h}\\
	&\lesssim h^{s_{\bm p}-\frac 1 2}\norm{\bm p}_{s^{\bm p},\Omega} + h^{s_{z}-\frac 3 2}\norm{z}_{s^{z},\Omega} +h^{s_{\bm q}+\frac 1 2}\norm{\bm q}_{s^{\bm q},\Omega} + h^{s_{y}-\frac 1 2}\norm{y}_{s^{y},\Omega}.
	\end{align*}
	The triangle inequality and \Cref{lemma:step3_conv_rates} yield
	\begin{align*}
	\|y -y_h\|_{\mathcal T_h}\lesssim h^{s_{\bm p}-\frac 1 2}\norm{\bm p}_{s^{\bm p},\Omega} + h^{s_{z}-\frac 3 2}\norm{z}_{s^{z},\Omega} +h^{s_{\bm q}+\frac 1 2}\norm{\bm q}_{s^{\bm q},\Omega} + h^{s_{y}-\frac 1 2}\norm{y}_{s^{y},\Omega}.
	\end{align*}
\end{proof}

\subsubsection{Step 7: Estimates for $\|\boldmath p-\boldmath p_h\|_{\mathcal T_h}$,  $\|z-z_h\|_{\mathcal T_h}$, and $\|\boldmath q - \boldmath q_h\|_{\mathcal T_h}$}
\begin{lemma}
	We have
	\begin{align*}
	\norm {\zeta_{\bm p}}_{\mathcal T_h}  &\lesssim h^{s_{\bm p}-\frac 1 2}\norm{\bm p}_{s_{\bm p},\Omega} +  h^{s_{z}-\frac 3 2}\norm{z}_{s_{z},\Omega} + h^{s_{\bm q}+\frac 1 2}\norm{\bm q}_{s_{\bm q},\Omega} + h^{s_{y}-\frac 12 }\norm{y}_{s_{y},\Omega},\\
	\|\zeta_z\|_{\mathcal T_h} & \lesssim  h^{s_{\bm p}-\frac 1 2}\norm{\bm p}_{s_{\bm p},\Omega} +  h^{s_{z}-\frac 3 2}\norm{z}_{s_{z},\Omega} + h^{s_{\bm q}+\frac 1 2}\norm{\bm q}_{s_{\bm q},\Omega} + h^{s_{y}-\frac 12 }\norm{y}_{s_{y},\Omega}.
	\end{align*}
\end{lemma}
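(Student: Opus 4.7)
The plan is to mirror the energy argument used in Step 5 for $(\varepsilon_h^{\bm p},\varepsilon_h^z,\varepsilon_h^{\widehat z})$, but now applied to the cleaner error system \eqref{eq_zh_zhu} where the right-hand side is simply $(\zeta_y, w_2)_{\mathcal T_h}$. First I would observe that subtracting the $\bm r_2$-equation in the auxiliary problem \eqref{HDG_u_b} from \eqref{HDG_discrete2_c}, and using $\zeta_{\widehat z}=0$ on $\varepsilon_h^\partial$, yields
\begin{align*}
(\zeta_{\bm p},\bm r_2)_{\mathcal T_h}-(\zeta_z,\nabla\cdot\bm r_2)_{\mathcal T_h}+\langle \zeta_{\widehat z},\bm r_2\cdot\bm n\rangle_{\partial\mathcal T_h}=0 \qquad \forall \bm r_2\in \bm V_h,
\end{align*}
so that $(\zeta_{\bm p},\zeta_z,\zeta_{\widehat z})$ satisfies the structural hypothesis \eqref{first_error_eq}. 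Consequently \Cref{nabla_ine} and \Cref{lemma:discr_Poincare_ineq} are available and give
\begin{align*}
\|\zeta_z\|_{\mathcal T_h}\lesssim \|\zeta_{\bm p}\|_{\mathcal T_h}+h^{-1/2}\|\zeta_z-\zeta_{\widehat z}\|_{\partial\mathcal T_h}.
\end{align*}

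Next I would test \eqref{eq_zh_zhu} with $(\bm r_2,w_2,\mu_2)=(\zeta_{\bm p},\zeta_z,\zeta_{\widehat z})$ and apply the energy identity for $\mathscr B_2$ in \Cref{property_B}. Since $\zeta_{\widehat z}\equiv 0$ on $\varepsilon_h^\partial$ and assumption \textbf{(A3)} makes $h^{-1}+\tau_2+\tfrac12\bm\beta\cdot\bm n\gtrsim h^{-1}$, the identity yields
\begin{align*}
\|\zeta_{\bm p}\|_{\mathcal T_h}^2+h^{-1}\|\zeta_z-\zeta_{\widehat z}\|_{\partial\mathcal T_h}^2 + \tfrac12\|(-\nabla\cdot\bm\beta)^{1/2}\zeta_z\|_{\mathcal T_h}^2 \lesssim (\zeta_y,\zeta_z)_{\mathcal T_h}.
\end{align*}
Bounding the right-hand side by Cauchy--Schwarz and the Poincaré estimate above gives $(\zeta_y,\zeta_z)_{\mathcal T_h}\le \|\zeta_y\|_{\mathcal T_h}(\|\zeta_{\bm p}\|_{\mathcal T_h}+h^{-1/2}\|\zeta_z-\zeta_{\widehat z}\|_{\partial\mathcal T_h})$, and Young's inequality then absorbs the flux and jump contributions into the left, leaving
\begin{align*}
\|\zeta_{\bm p}\|_{\mathcal T_h}+h^{-1/2}\|\zeta_z-\zeta_{\widehat z}\|_{\partial\mathcal T_h}\lesssim \|\zeta_y\|_{\mathcal T_h}, \qquad \|\zeta_z\|_{\mathcal T_h}\lesssim \|\zeta_y\|_{\mathcal T_h}.
\end{align*}

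To finish, I would simply invoke the bound on $\|\zeta_y\|_{\mathcal T_h}$ established in Step 6, which is already
\begin{align*}
\|\zeta_y\|_{\mathcal T_h}\lesssim h^{s_{\bm p}-1/2}\|\bm p\|_{s_{\bm p},\Omega}+h^{s_z-3/2}\|z\|_{s_z,\Omega}+h^{s_{\bm q}+1/2}\|\bm q\|_{s_{\bm q},\Omega}+h^{s_y-1/2}\|y\|_{s_y,\Omega},
\end{align*}
yielding the claimed estimates for both $\|\zeta_{\bm p}\|_{\mathcal T_h}$ and $\|\zeta_z\|_{\mathcal T_h}$. I do not anticipate a real obstacle: the only delicate point is verifying that $(\zeta_{\bm p},\zeta_z,\zeta_{\widehat z})$ falls under the hypotheses of \Cref{nabla_ine} and \Cref{lemma:discr_Poincare_ineq}, which requires using the boundary condition $\zeta_{\widehat z}=0$ on $\varepsilon_h^\partial$ to rewrite the interior-edge sum as a full $\partial\mathcal T_h$ sum; everything else is routine Cauchy--Schwarz plus Young.
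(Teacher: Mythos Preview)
Your proposal is correct and follows essentially the same route as the paper: test \eqref{eq_zh_zhu} with $(\zeta_{\bm p},\zeta_z,\zeta_{\widehat z})$, apply the $\mathscr B_2$ energy identity together with $\zeta_{\widehat z}=0$ on $\varepsilon_h^\partial$, then control $(\zeta_y,\zeta_z)_{\mathcal T_h}$ via Cauchy--Schwarz, the discrete Poincar\'e inequality, and \Cref{nabla_ine}, and finally insert the Step~6 bound on $\|\zeta_y\|_{\mathcal T_h}$. Your explicit verification that $(\zeta_{\bm p},\zeta_z,\zeta_{\widehat z})$ satisfies \eqref{first_error_eq} is a helpful addition the paper leaves implicit.
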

\begin{proof}
	By the energy identity for $ \mathscr B_2 $ in \Cref{property_B}, the second error equation \eqref{eq_zh_zhu}, and since $\zeta_{ \widehat z} = 0$ on $\varepsilon_h^{\partial}$, we have
	\begin{align*}
	\hspace{2em}&\hspace{-2em}\mathscr B_2(\zeta_{\bm p},\zeta_z,\zeta_{\widehat z};\zeta_{\bm p},\zeta_z,\zeta_{\widehat z})\\
	&=(\zeta_{\bm p}, \zeta_{\bm p})_{{\mathcal{T}_h}}+\langle (h^{-1}+\tau_2+\frac 12 \bm{\beta}\cdot\bm n) (\zeta_z-\zeta_{\widehat z}) , \zeta_z-\zeta_{\widehat z}\rangle_{\partial{{\mathcal{T}_h}}}\\
	&=(\zeta_y,\zeta_z)_{\mathcal T_h}\\
	&\le \norm{\zeta_y}_{\mathcal T_h} \norm{\zeta_z}_{\mathcal T_h}\\
	&\lesssim \norm{\zeta_y}_{\mathcal T_h} (\|\nabla \zeta_z\|_{\mathcal T_h} + h^{-\frac 1 2} \|\zeta_z - \zeta_{\widehat z}\|_{\partial\mathcal T_h}) \\
	& \lesssim \norm{\zeta_y}_{\mathcal T_h}
	(\|\zeta_{\bm p}\|_{\mathcal T_h}+h^{-\frac1 2}\|\zeta_z-\zeta_{\widehat z}\|_{\partial\mathcal T_h}).
	\end{align*}
	Here, for the last two inequalities we used the discrete Poincar{\'e} inequality in \\ \Cref{lemma:discr_Poincare_ineq} and \Cref{nabla_ine}.  This gives
	\begin{align*}
	\hspace{3em}&\hspace{-3em} \norm {\zeta_{\bm p}}_{\mathcal T_h} +h^{-\frac1 2}\|\zeta_z-\zeta_{\widehat z}\|_{\partial\mathcal T_h}\\
	& \lesssim h^{s_{\bm p}-\frac 1 2}\norm{\bm p}_{s_{\bm p},\Omega} +  h^{s_{z}-\frac 3 2}\norm{z}_{s_{z},\Omega} + h^{s_{\bm q}+\frac 1 2}\norm{\bm q}_{s_{\bm q},\Omega} + h^{s_{y}-\frac 12 }\norm{y}_{s_{y},\Omega}.
	\end{align*}
	Using the discrete Poincar{\'e} inequality and \Cref{nabla_ine} again yields
	\begin{align*}
	\|\zeta_z\|_{\mathcal T_h} & \lesssim \|\nabla \zeta_z\|_{\mathcal T_h} + h^{-\frac 1 2} \|\zeta_z - \zeta_{\widehat z}\|_{\partial\mathcal T_h}\\
	&\lesssim h^{s_{\bm p}-\frac 1 2}\norm{\bm p}_{s_{\bm p},\Omega} +  h^{s_{z}-\frac 3 2}\norm{z}_{s_{z},\Omega} + h^{s_{\bm q}+\frac 1 2}\norm{\bm q}_{s_{\bm q},\Omega} + h^{s_{y}-\frac 12 }\norm{y}_{s_{y},\Omega}.
	\end{align*}
\end{proof}

To obtain a positive convergence rate for $ \bm q $, we need
\begin{equation}\label{eqn:strict_regularity}
r_y > 1,  \quad  r_z > 2,  \quad  r_{\bm q} > 0,  \quad  r_{\bm p} > 1.
\end{equation}
\begin{lemma}
	If \textbf{(A1)}, \eqref{eqn:strict_regularity}, and $k\geq 1$ hold, then
	\begin{align*}
	\norm {\zeta_{\bm q}}_{\mathcal T_h}  &\lesssim h^{s_{\bm p}-1}\norm{\bm p}_{s_{\bm p},\Omega} +  h^{s_{z}-2}\norm{z}_{s_{z},\Omega} + h^{s_{\bm q}}\norm{\bm q}_{s_{\bm q},\Omega} + h^{s_{y}-1 }\norm{y}_{s_{y},\Omega}.
	\end{align*}
\end{lemma}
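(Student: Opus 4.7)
The plan is to exploit the first error equation \eqref{eq_yh_yhu} tested against $(\bm r_1,w_1,\mu_1)=(\zeta_{\bm q},\zeta_y,\zeta_{\widehat y})$, combined with the energy identity from \Cref{property_B}. Because $\nabla\cdot\bm\beta\le 0$ and \eqref{eqn:tau1_condition} holds, the left-hand side is coercive and controls
$$\|\zeta_{\bm q}\|_{\mathcal T_h}^2 + \|(h^{-1}+\tau_1-\tfrac12\bm\beta\cdot\bm n)^{1/2}(\zeta_y-\zeta_{\widehat y})\|_{\partial\mathcal T_h\setminus\varepsilon_h^\partial}^2 + \|(h^{-1}+\tau_1-\tfrac12\bm\beta\cdot\bm n)^{1/2}\zeta_y\|_{\varepsilon_h^\partial}^2,$$
while the right-hand side reduces to the boundary pairing $-\langle P_M u - u_h,\ \zeta_{\bm q}\cdot\bm n + (\bm\beta\cdot\bm n - h^{-1}-\tau_1)\zeta_y\rangle_{\varepsilon_h^\partial}$. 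Note that $P_M u - u_h = \zeta_{\widehat y}|_{\varepsilon_h^\partial}$, so the boundary datum on the right is precisely the quantity bounded in Step 6.

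Next, I would estimate the right-hand side by Cauchy-Schwarz and the standard discrete inverse trace inequalities
$$\|\zeta_{\bm q}\cdot\bm n\|_{\varepsilon_h^\partial} \lesssim h^{-1/2}\|\zeta_{\bm q}\|_{\mathcal T_h}, \qquad \|\zeta_y\|_{\varepsilon_h^\partial} \lesssim h^{-1/2}\|\zeta_y\|_{\mathcal T_h},$$
valid because $\zeta_{\bm q},\zeta_y$ are piecewise polynomials on a quasi-uniform mesh. Young's inequality, with a small parameter to absorb $\epsilon\|\zeta_{\bm q}\|_{\mathcal T_h}^2$ and $\epsilon h^{-1}\|\zeta_y\|_{\varepsilon_h^\partial}^2$ into the coercive left-hand side, then yields
$$\|\zeta_{\bm q}\|_{\mathcal T_h} \lesssim h^{-1/2}\|P_M u - u_h\|_{\varepsilon_h^\partial}.$$
The explicit factor $h^{-1/2}$ is exactly the half-order loss one expects when passing from a boundary $L^2$ estimate on the control to an interior $L^2$ estimate on the flux.

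Finally, I would insert the Step 6 bound on $\|u - u_h\|_{\varepsilon_h^\partial}$ together with the standard projection estimate $\|u-P_M u\|_{\varepsilon_h^\partial}\lesssim h^{s_u}\|u\|_{s_u,\Gamma}$, noting that under \eqref{eqn:strict_regularity} this projection error is absorbed by the advertised rates. Multiplying the Step 6 estimate by $h^{-1/2}$ produces the exponents $s_{\bm p}-1$, $s_z-2$, $s_{\bm q}$, and $s_y-1$, which match the statement. The main obstacle is the careful bookkeeping of the boundary absorption: one must verify that each term $(\bm\beta\cdot\bm n - h^{-1}-\tau_1)\zeta_y$ splits into a piece controlled by $h^{-1}\|\zeta_y\|_{\varepsilon_h^\partial}^2$ (coercive) and a residual controlled by $\|\zeta_y\|_{\varepsilon_h^\partial}$ (bounded), and that the $k\ge 1$ hypothesis guarantees the polynomial degrees $k+1\ge 2$ in $W_h$ and $M_h$ are sufficient for both inverse trace estimates to be used together without further loss. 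The strict regularity \eqref{eqn:strict_regularity} is what ultimately ensures each resulting exponent of $h$ is strictly positive.
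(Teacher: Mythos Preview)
Your argument is essentially the same as the paper's: test \eqref{eq_yh_yhu} with $(\zeta_{\bm q},\zeta_y,\zeta_{\widehat y})$, use the energy identity in \Cref{property_B}, absorb $\|\zeta_{\bm q}\|_{\mathcal T_h}^2$ and $h^{-1}\|\zeta_y\|_{\varepsilon_h^\partial}^2$ into the left, and finish with the Step~6 bound on the control error scaled by $h^{-1/2}$.

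Two small points of divergence are worth noting. First, the paper uses \textbf{(A1)} (and \textbf{(A2)}) to rewrite $\bm\beta\cdot\bm n - h^{-1}-\tau_1 = -(h^{-1}+\tau_2)$ with $\tau_2$ piecewise constant; this forces the multiplier $\zeta_{\bm q}\cdot\bm n - (h^{-1}+\tau_2)\zeta_y$ to lie in $M_h(\partial)$, so $P_M u - u_h$ can be replaced by $u-u_h$ directly in the pairing. Your triangle-inequality route (or, more simply, $\|P_M u - u_h\|_{\varepsilon_h^\partial}=\|P_M(u-u_h)\|_{\varepsilon_h^\partial}\le\|u-u_h\|_{\varepsilon_h^\partial}$ since $u_h\in M_h(\partial)$) also works, but then you are not actually using \textbf{(A1)}, which is a stated hypothesis. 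Second, your explanation of why $k\ge 1$ is needed is off: the inverse trace inequalities hold for any polynomial degree on a quasi-uniform mesh. The role of $k\ge 1$, together with the strict inequalities \eqref{eqn:strict_regularity}, is only to guarantee that the resulting exponents $s_{\bm p}-1$, $s_z-2$, $s_{\bm q}$, $s_y-1$ are all strictly positive (for $k=0$ one has $s_{\bm p}\le 1$, $s_z\le 2$, etc., regardless of the regularity of the continuous solution).
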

\begin{proof}
	By the energy identity in \Cref{property_B}, the first error equation \eqref{eq_yh_yhu}, and since $ \tau_2 $ is piecewise constant on $ \partial \mathcal{T}_h $, we have
	\begin{align*}
	\hspace{2em}&\hspace{-2em}\mathscr B_1(\zeta_{\bm q},\zeta_y,\zeta_{\widehat y};\zeta_{\bm q},\zeta_y,\zeta_{\widehat y}) \\
	&=(\zeta_{\bm q}, \zeta_{\bm q})_{{\mathcal{T}_h}}+\langle (h^{-1}+\tau_1 - \frac 12 \bm \beta \cdot\bm n) (\zeta_y-\zeta_{\widehat y}) , \zeta_y-\zeta_{\widehat y}\rangle_{\partial{{\mathcal{T}_h}}\backslash\varepsilon_h^\partial} - (\nabla\cdot\bm{\beta} \zeta_y,\zeta_y)_{\mathcal T_h}\\
	&\quad+ \langle (h^{-1}+\tau_1 - \frac 12 \bm \beta \cdot\bm n) \zeta_y, \zeta_y \rangle_{\varepsilon_h^\partial} \\
	&= -\langle P_M u-u_h, \zeta_{\bm q}\cdot \bm{n} + (\bm{\beta}\cdot\bm n-h^{-1}-\tau_1) \zeta_y \rangle_{{\varepsilon_h^{\partial}}}\\
	&= -\langle P_Mu-u_h, \zeta_{\bm q}\cdot \bm{n} - (h^{-1}+\tau_2) \zeta_y \rangle_{{\varepsilon_h^{\partial}}}\\
	&=-\langle u-u_h, \zeta_{\bm q}\cdot \bm{n} - (h^{-1}+\tau_2) \zeta_y \rangle_{{\varepsilon_h^{\partial}}}\\
	&	\lesssim \norm {u-u_h}_{\varepsilon_h^{\partial}} (\norm {\zeta_{\bm q}}_{\varepsilon_h^{\partial}} + h^{-1} \norm {\zeta_{y}}_{\varepsilon_h^{\partial}} )\\
	&	\lesssim h^{-\frac 1 2}\norm {u-u_h}_{\varepsilon_h^{\partial}} (\norm {\zeta_{\bm q}}_{\mathcal T_h} + h^{-\frac 1 2} \norm {\zeta_{y}}_{\varepsilon_h^{\partial}}).
	\end{align*}
	This gives
	\begin{align*}
	\norm {\zeta_{\bm q}}_{\mathcal T_h} &\lesssim h^{-\frac 1 2}\norm {u-u_h}_{\varepsilon_h^{\partial}} \\
	&\lesssim h^{s_{\bm p}-1}\norm{\bm p}_{s_{\bm p},\Omega} +  h^{s_{z}-2}\norm{z}_{s_{z},\Omega} + h^{s_{\bm q}}\norm{\bm q}_{s_{\bm q},\Omega} + h^{s_{y}-1 }\norm{y}_{s_{y},\Omega}.
	\end{align*}
\end{proof}

The above lemma, the triangle inequality, \Cref{lemma:step3_conv_rates}, and \Cref{lemma:step6_conv_rates} complete the proof of the main result:
\begin{theorem}
	We have
	\begin{align*}
	\norm {\bm p - \bm p_h}_{\mathcal T_h}   &\lesssim h^{s_{\bm p}-\frac 1 2}\norm{\bm p}_{s_{\bm p},\Omega} +  h^{s_{z}-\frac 3 2}\norm{z}_{s_{z},\Omega} + h^{s_{\bm q}+\frac 1 2}\norm{\bm q}_{s_{\bm q},\Omega} + h^{s_{y}-\frac 12 }\norm{y}_{s_{y},\Omega},\\
	\norm {z - z_h}_{\mathcal T_h}   & \lesssim  h^{s_{\bm p}-\frac 1 2}\norm{\bm p}_{s_{\bm p},\Omega} +  h^{s_{z}-\frac 3 2}\norm{z}_{s_{z},\Omega} + h^{s_{\bm q}+\frac 1 2}\norm{\bm q}_{s_{\bm q},\Omega} + h^{s_{y}-\frac 12 }\norm{y}_{s_{y},\Omega}.
	\end{align*}
	If in addition \eqref{eqn:strict_regularity} is satisfied and $ k \geq 1 $, then
	\begin{align*}
	\norm {\bm q - \bm q_h}_{\mathcal T_h} &\lesssim h^{s_{\bm p}-1}\norm{\bm p}_{s_{\bm p},\Omega} +  h^{s_{z}-2}\norm{z}_{s_{z},\Omega} + h^{s_{\bm q}}\norm{\bm q}_{s_{\bm q},\Omega} + h^{s_{y}-1 }\norm{y}_{s_{y},\Omega}.
	\end{align*}
\end{theorem}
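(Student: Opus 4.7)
The plan is to obtain the final theorem as a direct corollary of the triangle inequality, combined with the two families of estimates already established: the estimates for the auxiliary problem errors $\bm p-\bm p_h(u)$, $z-z_h(u)$, $\bm q-\bm q_h(u)$ from Steps 2, 3 and 5, and the estimates for the discrete error components $\zeta_{\bm p}$, $\zeta_z$, $\zeta_{\bm q}$ from the two lemmas just proved in Step 7. The theorem is essentially a bookkeeping step that assembles all prior work; indeed the authors flag it as the completion of the main result.

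First, I would split
\[
\bm p-\bm p_h = (\bm p-\bm p_h(u)) + \zeta_{\bm p}, \qquad z-z_h = (z-z_h(u)) + \zeta_z, \qquad \bm q-\bm q_h = (\bm q-\bm q_h(u)) + \zeta_{\bm q},
\]
and apply the triangle inequality in the $L^2(\mathcal{T}_h)$ norm. For the first two, I would insert the bound from \Cref{lemma:step6_conv_rates} for the auxiliary errors and the bound from the preceding lemma of Step 7 for $\zeta_{\bm p}$ and $\zeta_z$. A quick check shows the auxiliary-problem bounds $h^{s_{\bm p}}\|\bm p\|_{s_{\bm p},\Omega} + h^{s_z-1}\|z\|_{s_z,\Omega} + h^{s_{\bm q}+1}\|\bm q\|_{s_{\bm q},\Omega} + h^{s_y}\|y\|_{s_y,\Omega}$ are dominated by (in fact are higher order than, term-by-term under $h\le 1$) the right-hand side of the theorem, which carries the worse exponents $s_{\bm p}-\tfrac12$, $s_z-\tfrac32$, $s_{\bm q}+\tfrac12$, $s_y-\tfrac12$ coming from $\zeta_{\bm p}$ and $\zeta_z$. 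Hence the $\zeta$ contribution is dominant, and the claimed estimates for $\bm p-\bm p_h$ and $z-z_h$ follow.

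For the estimate on $\bm q-\bm q_h$, I would again use the triangle inequality together with \Cref{lemma:step3_conv_rates}, which gives $\|\bm q-\bm q_h(u)\|_{\mathcal T_h}\lesssim h^{s_{\bm q}}\|\bm q\|_{s_{\bm q},\Omega} + h^{s_y-1}\|y\|_{s_y,\Omega}$, and the bound on $\zeta_{\bm q}$ from the last lemma, which requires the strict inequalities in \eqref{eqn:strict_regularity} and $k\ge 1$. Again a direct comparison shows $\|\zeta_{\bm q}\|_{\mathcal T_h}$ dominates $\|\bm q-\bm q_h(u)\|_{\mathcal T_h}$, since its bound already includes $h^{s_{\bm q}}\|\bm q\|_{s_{\bm q},\Omega}$ plus additional contributions of order $h^{s_{\bm p}-1}$, $h^{s_z-2}$, $h^{s_y-1}$, so nothing new needs to be controlled.

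There is really no substantive obstacle at this stage: the analytic content has all been absorbed into the earlier lemmas, in particular into the bootstrap chain $\varepsilon_h^{\bm q}\Rightarrow\varepsilon_h^y$ (by duality) $\Rightarrow\varepsilon_h^{\bm p},\varepsilon_h^z$ (by the adjoint equation) $\Rightarrow u-u_h$ (through the optimality relation and the boundary trace terms $h^{-1/2}$ and $h^{-3/2}$) $\Rightarrow \zeta_{\bm q}$. The only point requiring care is verifying that each term on the right-hand sides of the auxiliary bounds is indeed dominated by the corresponding term in the $\zeta$-bounds, so that after the triangle inequality the stated estimates hold with the same (worst-case) exponents.
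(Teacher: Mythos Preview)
Your proposal is correct and follows essentially the same route as the paper: the paper's proof of this theorem consists of a single sentence invoking the triangle inequality together with \Cref{lemma:step3_conv_rates}, \Cref{lemma:step6_conv_rates}, and the two lemmas on $\zeta_{\bm p},\zeta_z,\zeta_{\bm q}$ from Step~7. Your write-up is in fact more explicit than the paper's, since you verify term-by-term that the auxiliary-problem bounds are absorbed by the $\zeta$-bounds.
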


\section{Numerical Experiments}
\label{sec:numerics}

We present numerical results for a 2D example problem similar to examples from \cite{MR2272157,MR2806572} with $ \bm \beta = \bm 0 $.  We consider a square domain $\Omega = [0,1/8]\times [0,1/8] \subset \mathbb{R}^2$, and choose the problem data
\begin{align*}
f=0, \ \  y_d = (x^2+y^2)^{-1/3},\ \  \bm \beta = [1, 1], \ \ \ \mbox{and} \  \ \ \gamma = 1.
\end{align*}
The largest interior angle is $ \omega = {\pi}/{2}$, and therefore $ r_\Omega = 3/2 $.  Also, we have $y_d\in H^{1/3-\varepsilon}(\Omega) $ for any $\varepsilon> 0$, and therefore $ r_d = 5/6-\varepsilon $ for any $\varepsilon> 0$.  For this example, the value of $ r_d $ restricts the guaranteed regularity of the solution.

We do not have an exact solution for this problem; therefore, we generate numerical convergence rates by computing errors between approximate solutions computed on different meshes.  Specifically, we compare approximate solutions computed on various meshes with the approximate solution on a fine mesh with 524288 elements, i.e., $h = 2^{-12}\sqrt 2$.  For all computations, we take $ \tau_2 = 1 $ and $ \tau_1 = \tau_2 + \bm \beta \cdot \bm n $ so that \textbf{(A1)}-\textbf{(A3)} are satisfied.

When $k=1$, the guaranteed theoretical convergence rates are given by \Cref{cor:main_result} in \Cref{sec:analysis}:
\begin{align*}
&\norm{y-{y}_h}_{0,\Omega}=O( h^{5/6-\varepsilon} ),  &  &\norm{z-{z}_h}_{0,\Omega} = O( h^{5/6-\varepsilon} ),\\
&\norm{\bm{q}-\bm{q}_h}_{0,\Omega} = O( h^{1/3-\varepsilon} ),  &  &\norm{\bm{p}-\bm{p}_h}_{0,\Omega} = O( h^{5/6-\varepsilon} ),
\end{align*}
and
\begin{align*}
&\norm{u-{u}_h}_{0,\Gamma} = O( h^{5/6-\varepsilon}).
\end{align*}
\Cref{table_1} shows numerical results for this case.  As in Part I, the numerically observed convergence rates match the theory for the control $ u $ and the primary flux $ \bm q $, but are higher than the theoretical rates for the other variables.  As mentioned in Part~I, similar convergence behavior has been observed in other works \cite{HuShenSinglerZhangZheng_HDG_Dirichlet_control1,MR3070527,MR3317816,MR2806572}.
\begin{table}
	\begin{center}
		\begin{tabular}{|c|c|c|c|c|c|}
			\hline
			$h/\sqrt{2}$ &$2^{-4}$& $2^{-5}$&$2^{-6}$&$2^{-7}$ & $2^{-8}$ \\
			\hline
			$\norm{\bm{q}-\bm{q}_h}_{0,\Omega}$&1.45e-1   &1.00e-1  &7.41e-2   &5.63e-2& 4.30e-2 \\
			\hline
			order&-& 0.53& 0.44  &0.40&  0.39\\
			\hline
			$\norm{\bm{p}-\bm{p}_h}_{0,\Omega}$&2.67e-3  &9.65e-4   &3.55e-4   &1.35e-4 &5.20e-5\\
			\hline
			order&-&  1.47&1.44 &1.40 & 1.37 \\
			\hline
			$\norm{{y}-{y}_h}_{0,\Omega}$&1.00e-3   &3.32e-4   &1.21e-4   &4.60e-5 & 1.80e-5\\
			\hline
			order&-& 1.60&1.46&1.39 & 1.35 \\
			\hline
			$\norm{{z}-{z}_h}_{0,\Omega}$& 5.91e-5   &1.21e-5  &2.43e-6   &4.84e-7 & 9.63e-8 \\
			\hline
			order&-& 2.29&2.32&2.33& 2.33 \\
			\hline
			$\norm{{u}-{u}_h}_{0,\Gamma}$&1.31e-2& 6.38e-3&3.32e-3&1.81e-3 & 1.00e-3 \\
			\hline
			order&-&  1.03& 0.94&0.88& 0.85 \\
			\hline
		\end{tabular}
	\end{center}
	\caption{2D Example with $k=1$: Errors for the control $u$, state $y$, adjoint state $z$, and the fluxes $\bm q$ and $\bm p$.}\label{table_1}
\end{table}

Next, for $k=0$, \Cref{cor:main_result} gives the suboptimal convergence rates
\begin{align*}
\norm{y-{y}_h}_{0,\Omega}=O( h^{1/2-\varepsilon} ),~~  \;\norm{z-{z}_h}_{0,\Omega}=O( h^{1/2-\varepsilon} ),~~ \norm{\bm{p}-\bm{p}_h}_{0,\Omega} = O( h^{1/2-\varepsilon} ),
\end{align*}
and
\begin{align*}
&\norm{u-{u}_h}_{0,\Gamma} = O( h^{1/2-\varepsilon}).
\end{align*}
As in Part I, we observe much larger numerical convergence rates for all variables.  Improving the analysis for the $ k = 0 $ case is again an interesting topic we leave to be considered elsewhere.
\begin{table}
	\begin{center}
		\begin{tabular}{|c|c|c|c|c|c|}
			\hline
			$h/\sqrt{2}$ &$2^{-4}$& $2^{-5}$&$2^{-6}$&$2^{-7}$ & $2^{-8}$ \\
			\hline
			$\norm{\bm{q}-\bm{q}_h}_{0,\Omega}$&2.22e-1   &1.69e-1   &1.22e-1   &8.92e-2   &6.56e-2 \\
			\hline
			order&-&  0.39   &0.47   &0.46   &0.44\\
			\hline
			$\norm{\bm{p}-\bm{p}_h}_{0,\Omega}$&8.60e-3   &5.10e-3   &2.75e-3   &1.43e-3   &7.31e-4\\
			\hline
			order&-&  0.75   &0.90   &0.94   &0.97 \\
			\hline
			$\norm{{y}-{y}_h}_{0,\Omega}$&2.96e-3   &1.33e-3   &4.91e-4   &1.82e-4   &6.97e-5 \\
			\hline
			order&-& 1.15   &1.44   &1.43   &1.39 \\
			\hline
			$\norm{{z}-{z}_h}_{0,\Omega}$&    3.82e-4   &1.08e-4   &2.89e-5   &7.48e-6   &1.90e-6 \\
			\hline
			order&-& 1.82   &1.91   &1.95   &1.97 \\
			\hline
			$\norm{{u}-{u}_h}_{0,\Gamma}$&2.83e-2   &1.79e-2   &1.07e-2   &6.14e-3   &3.47e-3 \\
			\hline
			order&-&  0.66  &0.75   &0.80   &0.82\\
			\hline
		\end{tabular}
	\end{center}
	\caption{2D Example with $k=0$: Errors for the control $u$, state $y$, adjoint state $z$, and the fluxes $\bm q$ and $\bm p$.}\label{table_2}
\end{table}

\section{Conclusion}

In Part I of this work, we considered a Dirichlet boundary control problem for an elliptic convection diffusion equation and approximated the solution using a new HDG method.  We also proved optimal convergence rates for the control under a high regularity assumption.  In this paper, we removed the restrictions on the domain $\Omega$ and the desired state $y_d$ from Part I and considered a low regularity scenario.  We used very different HDG analysis techniques to prove optimal convergence rates for the control.

As far as we are aware, this paper and Part I are the only existing analysis and numerical analysis explorations of this convection diffusion Dirichlet control problem.  We leave many topics to be considered in future work, such as improving the HDG convergence analysis for the Dirichlet boundary control problem considered here and also applying HDG methods to Dirichlet control problems for fluids.

\bibliographystyle{siamplain}
\bibliography{yangwen_ref_papers,yangwen_ref_books}

\end{document}